\newtheorem{theorem}{Theorem}[section]
\newtheorem{corollary}[theorem]{Corollary}
\newtheorem{lemma}[theorem]{Lemma}
\newtheorem{definition}[theorem]{Definition}
\newtheorem{proposition}[theorem]{Proposition}
 \theoremstyle{remark}
\newtheorem{remark}[theorem]{Remark}
\numberwithin{equation}{section}
\def\PSH{\text{PSH}}
\def\al{\alpha}
\def\la{\lambda}
\def\va{\varphi}
\def\red{\text{red}}
\def\reg{\text{reg}}
\def\su{\text{supp}}
\def\sig{\text{sig}}
\def\ep{\varepsilon}
\def\C{\mathbb C}
\def\R{\mathbb R}
\def\B{\mathbb B}
\def\loc{\text{loc}}
\def\Om{\Omega}
\def\CC{\mathcal C}
\def\fr{\frac}
\def\F{\mathcal F}
\def\ov{\overline}
\def\psh{plurisubharmonic}
\def\om{\omega}
\def\ve{\varepsilon}
\def\de{\delta}
\def\nhd{neighborhood}
\def\De{\Delta}
\def\E{\mathcal E}
\def\va{\varphi}
\def\B{\mathbb B}
\begin{document}
\title {Monge-Amp\`ere operators on complex varieties in $\C^n$}
\address{Department of Mathematics, Hanoi National University of Education,
136 Xuan Thuy street, Cau Giay, Hanoi, Vietnam}
\author{Nguyen Quang Dieu, Tang Van Long and Ounhean Sanphet}
\email{ngquangdieu@hn.vnn.vn, tvlong@hnue.edu.vn , sanphetMA@gmail.com }

\subjclass[2000]{Primary 32U15; Secondary 32B15}

\keywords{Plurisubharmonic functions, complex varieties}

\date{\today}

%\dedicatory{}
\maketitle
\section{Introduction}
Let $V$ be a connected complex variety of pure dimension $k$ in a domain $D \subset \mathbb C^ n (n  \ge 2, 1 \le k \le n-1).$ We denote by $V_{\reg}$ the set of regular points of $V.$ Thus $V_{\reg}$ is the largest (possibly disconnected) complex manifold of dimension $k$ included in $V.$ The singular locus of $V$ is then denoted by
$V_{\sig}:=V \setminus V_{\reg}.$ We also concern about the set of points near which $V$ is irreducible.
More precisely $V$ is said to be {\it locally irreducible} at $a \in V$ if there exists a
fundamental system of open  \nhd s $U_j$ of  $a$ such that each
$U_j\cap V$ is irreducible in $U_j$ (see [Ch] p.55).
We denote by $V_{\red}$ the set  of reducible points of  in $V$, i.e., $V_{\red}$ is the collection of points $a \in V$
such that $U \cap V$ is reducible in $U$ for {\it some} open \nhd\ $U$ of $a.$
It is then clear that $V_{\red}$ is a local (but not necessarily closed) complex subvariety of $V$.
For example, consider the Whitney umbrella variety $V^1:=\{(z_1, z_2, z_3) \in \mathbb B: z_1^2-z_2^2z_3=0\},$
where $\mathbb B$ is the unit ball in $\mathbb C^3,$
then $V^1_{\red}=\{(0,0,z_3): 0<\vert z_3\vert<1\},$ but $V^1$ is irreducible at the origin.
The case where $V$ is locally irreducible at any of its points (locally irreducible for short) or equivalently $V_{\red}=\emptyset,$ is of particular interest to us since in this case we may create plurisubharmonic functions by taking upper semicontinuous regularization of  families of {\it plurisubharmonic} functions which are locally bounded from above on $V.$ See Proposition \ref{regular} in the next section.

Recall that $u: V \to [-\infty, \infty), u \not\equiv-\infty$ on any irreducible component of $V,$ is said to be \psh\ if $u$ is locally the restriction (on $V$) of \psh\ functions on an open subset of $D$.
A fundamental result of Fornaess and Narasimhan (cf. Theorem 5.3.1 in [FN]) asserts that an upper semicontinuous function $u: V \to \R \cup [-\infty, \infty)$ which is not identically $-\infty$ on any irreducible component of $V$,
is \psh\  if and only if for every holomorphic map $\theta: \De \to V$, where $\De$ is the unit disk in $\C$, we have
$u \circ \theta$ is subharmonic on $\De.$ This powerful result implies immediately the nontrivial facts that  plurisubharmonicity is preserved under local uniform convergence.
We write $PSH (V)$ for the set of \psh\ functions on $V$ and $PSH^-(V)$ denotes  the subset of negative \psh\ functions on $V$.  The complex variety $V$ is said to be {\it hyperconvex} if there exists
$\rho \in PSH^-(V) \cap L^\infty(V)$ such that $\{z \in V: \rho(z) < c\}$ is relatively compact in $V$ for each $c<0.$
Note that $\rho$ is not assumed to be continuous on $V.$
Since $-1/\rho \in PSH(V)$ for every $\rho \in PSH^{-}(V)$, we see that every hyperconvex variety is Stein, i.e.,
there exists a \psh\ exhaustion function for $V.$

Next, we turn to the complex Monge-Amp\`ere operator for locally bounded \psh\ functions on $V.$
First, we note that by Proposition 1.8 in [Dem], each $\psi \in PSH(V)$ (not necessarily locally bounded) is locally integrable on $V$ with respect to the Lebesgue measure. Thus $dd^c \psi$ is a closed positive $(1,1)$ current on $V.$
According to Bedford in [Be] (see also [Dem]), the complex Monge-Amp\`ere operator
$$(dd^c )^k: PSH(V) \cap L^{\infty}_{\text{loc}} (V) \to M_{k,k}(V),$$
where $M_{k,k} (V)$ denotes Radon measures on $V,$ may be defined in the usual way on the regular locus $V_{\reg}$ of
$V$ (cf. [BT1],[BT2]), and it extends "by zero" through the singular locus $V_{\sig},$ i.e., for Borel sets
$E \subset V$
\begin{equation}\label{monge}
\int\limits_E (dd\psi)^k: =\int\limits_{E \cap V_\reg} (dd^c \psi)^k, \ \forall \psi \in
\PSH(V)\cap L^{\infty}_{\text{loc}}(V).
\end{equation}
The main goal of this article is to find, following the approach given in [Ce1] and [Ce2], the largest possible class
$\E (V)$ of $\PSH^{-} (V)$ on which the complex Monge-Amp\`ere operator can be reasonably defined. Moreover, we are also interested in giving sufficient condition for solvability of the Monge-Amp\`ere operator on a certain sub-class of
$\E (V)$. There are at least two technical difficulties in the process of generalizing Cegrell's machinery from the case of domains in $\mathbb C^n$ to the case of complex varieties $V$. Firstly, semi-local smoothing of \psh\ functions by taking convolutions with approximation of identities does not work on $V$, and secondly the upper-regularization of a family of \psh\ functions on $V$ may fail to be \psh\ if $V$ is not locally irreducible.

We now briefly outline the content of the paper. We first collect basics facts about plurisubharmonic functions and the complex Monge-Amp\`ere operators on complex varieties. The major tools are integration by parts formulas which are established by using a smoothing method devised by Bedford in [Be]. Being equipped with these formulas, we go on to prove some estimates for energy of \psh\ functions with zero boundary values.
Our main results are explained in Section 3. We first introduce in Proposition \ref{e0}, a class of hyperconvex varieties $V$ for which $\E_0 (V) \ne \emptyset$, i.e., there exists a bounded element in $\PSH^{-}(V)$ having zero boundary values and total {\it finite} Monge-Amp\`ere mass. It does not seem easy to decide if such function $u$ exists on a simple one dimension variety such as $V^2:=\bigcup\limits_{j \ge 1}\{(z_1,z_2)\in \mathbb B: z_1(z_2-a_j)=0\},$ where $\mathbb B$ is the unit ball in $\mathbb C^2$ and $\{a_j\}$ is a sequence in the unit disk $\De$ of
$\mathbb C$ such that $\vert a_j\vert \uparrow 1$ as $j \to \infty.$
The class $\E_0 (V)$ is then used to define and characterize $\E(V)$ the "largest" possible sub-class of $\PSH^{-} (V)$ on which the operator $(dd^c)^k$ can be defined and retains some basic properties as describe in [BT2], [Ce1] and [Ce2].
Our last main result is Theorem \ref{Cegrelltheorem} that ensures the existence in certain sub-class of $\E (V)$ of the unique solution to the Monge-Amp\`ere equation on $V.$
This result is achieved by following the scheme outlined by Cegrell in [Ce3]
(for the case of domains in $\mathbb C^n$) together with all the machinery we develop upto the theorem.
\vskip0,6cm
\noindent
{\bf Acknowledgments.} We are grateful to an anonymous referee for his(her) criticisms to an earlier version of this note. This work is written during a visit of the first named author to the Vietnam Institute for Advances in Mathematics (VIASM) in the winter of 2016. We would like to thank this institution for hospitality and financial support. Our work is also supported by the grant 101.02-2016.07 from the NAFOSTED program.
\section{Preliminary}
We first review some notions of pluripotential theory on complex varieties pertaining to our work.
For detailed discussions, the reader may consult the sources like [Be] and [Dem].
A subset $X \subset V$ is said to be {\it (locally) pluripolar} if for every $a \in V$, there exists a \nhd\ $U$ of $a$
and $v \in PSH(U)$ such that $v|_{X \cap U}=-\infty.$
For instance, $V_{\sig}$ being a {\it proper} closed complex subvariety of $V$ is (locally) pluripolar in $V.$
According to Theorem 5.3 in [Be], for every pluripolar subset $X$ of $V$, there exists $u \in PSH(V)$ such that
$u|_X \equiv -\infty.$ Of course, in the case of domains in $\mathbb C^n,$ this statement is contained in a classical theorem of Josefson.

To decide pluripolarity of subsets in $V$, following [Be] (see also [BT2] for the case of open domains in
$\mathbb C^n$), we define
for each Borel subset $E$ of an open set $\Om \subset V,$ the capacity of $E$ relative to $\Om$
\begin{equation} \label{capacity}
C(E,\Om):=\sup\big\{\int_E (dd^c u)^k: u \in \PSH(\Om): -1 \le u<0\big\}.
\end{equation}
The above definition makes sense since $\psi (z):=\fr{\Vert z\Vert^2}M-1 \in PSH(V)$ and satisfies $-1\le \psi<0,$
where $M:=\sup \{\vert z\vert^2: z \in D\}<\infty.$
In the case $\Om=V$, by an abuse of notation we will write $C(E)$ instead of $C(E,V).$

Obviously, by (\ref{capacity}), the singular locus $V_{\sig}$ has zero capacity, i.e.,
$C(V_{\sig} \cap \Om, \Om)=0$ for every open subset $\Om$ of $V.$
The following basic result of Bedford (Lemma 3.1 in [Be]) asserts that $V_{\sig}$ actually has {\it outer} capacity zero.
\begin{lemma}\label{outer} For every open subset $\Om$ of $V$ and
every $\ve>0$, there exists an open \nhd\ $U$ of $V_{\sig}$ in $\Om$ such that
$C(U,\Om)<\ve.$
\end{lemma}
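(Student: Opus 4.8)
The plan is to cover $V_{\sig}\cap\Om$ by countably many small charts, to build inside each chart a thin sublevel neighbourhood of $V_{\sig}$ of arbitrarily small relative capacity, and to glue these using the elementary countable subadditivity of $C(\cdot,\Om)$. Two soft facts are used repeatedly. First, $C(\cdot,\Om)$ is countably subadditive: for Borel sets $E_i\Sub\Om$ and any competitor $u\in\PSH(\Om)$ with $-1\le u<0$ one has $\int_{\bigcup_iE_i}(dd^cu)^k\le\sum_i\int_{E_i}(dd^cu)^k\le\sum_iC(E_i,\Om)$ since $(dd^cu)^k$ is a positive measure, and taking $\sup_u$ gives $C(\bigcup_iE_i,\Om)\le\sum_iC(E_i,\Om)$. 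Second, if $E\Sub\Om'\Sub\Om$ with $\Om'$ open, then $C(E,\Om)\le C(E,\Om')$, because competitors for $C(\cdot,\Om)$ restrict to competitors for $C(\cdot,\Om')$ and the value of $\int_E(dd^cu)^k$ only depends on $u$ near $E$.

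\textbf{Reduction to a local estimate.} Since $V_{\sig}$ is a closed complex subvariety of $V$, each $a\in V_{\sig}$ has a neighbourhood $W_a:=V\cap\B(a,r_a)$ with $\ov{\B(a,r_a)}\cap V\Sub\Om$ on which there are holomorphic functions $f_1,\dots,f_m$ with common zero set $V_{\sig}\cap W_a$; then $\psi_a:=\fr12\log(|f_1|^2+\cdots+|f_m|^2)\in\PSH(W_a)$ is not $\equiv-\infty$ on any component of $W_a$ (as $V_{\sig}$ is a proper subvariety), satisfies $\psi_a^{-1}(-\infty)=V_{\sig}\cap W_a$, and, after shrinking $r_a$, obeys $\psi_a\le0$. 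By the Lindel\"of property choose countably many $a_i$ so that $W_i':=V\cap\B(a_i,r_{a_i}/2)$ cover $V_{\sig}\cap\Om$, write $W_i:=W_{a_i}$, $\psi_i:=\psi_{a_i}$, and set $U_i(t):=\{z\in W_i':\psi_i(z)<-t\}$, which is open in $\Om$ and contains $V_{\sig}\cap W_i'$. For any $t_i\to\infty$ the set $U:=\bigcup_iU_i(t_i)$ is then open in $\Om$ and contains $V_{\sig}\cap\Om$, and $C(U,\Om)\le\sum_iC(U_i(t_i),\Om)\le\sum_iC(U_i(t_i),W_i)$. Thus it suffices to prove $C(U_i(t),W_i)\to0$ as $t\to\infty$ for each fixed $i$, and then to pick $t_i$ so large that $C(U_i(t_i),W_i)<\ve 2^{-i-1}$.

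\textbf{The local estimate.} Fix $i$ and drop the index. For $m\ge t$ put $\psi^{(m)}:=\max(\psi,-m)\in\PSH(W)\cap L^\infty(W)$, so $0\le-\psi^{(m)}$ and, on $U(t)=\{\psi<-t\}$, $-\psi^{(m)}\ge t$. Hence for every competitor $u\in\PSH(W)$ with $-1\le u<0$,
\[
\int_{U(t)}(dd^cu)^k\le\fr1t\int_{W'}(-\psi^{(m)})(dd^cu)^k\le\fr1t\int_{W'}(-\psi)(dd^cu)^k,
\]
the last step by monotone convergence as $m\to\infty$. By the (weighted) Chern--Levine--Nirenberg inequality, applicable because $\psi$ is plurisubharmonic hence locally integrable on $V$ (Proposition 1.8 in [Dem]) and $W'\Sub W$, there is $A=A(W',W,\psi)<\infty$, independent of $u$, with $\int_{W'}(-\psi)(dd^cu)^k\le A$. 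Taking $\sup_u$ gives $C(U(t),W)\le A/t\to0$, which completes the proof.

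\textbf{Main obstacle and a remark.} The delicate point is the uniform bound $\int_{W'}(-\psi)(dd^cu)^k\le A$ on the possibly singular space $W$: the integration approaches and crosses $V_{\sig}$, and this is precisely where the Chern--Levine--Nirenberg machinery on complex varieties (derived from Bedford's semi-local smoothing of plurisubharmonic functions, the device behind the integration-by-parts estimates) is needed; carried out naively on the manifold $V_{\reg}$ the estimate fails to close, since $W'\cap V_{\reg}$ is not relatively compact in $V_{\reg}$ near $V_{\sig}$. Note that the argument deliberately avoids upper regularizations of families of plurisubharmonic functions (relative extremal functions, say), which need not be plurisubharmonic when $V$ is not locally irreducible; only monotone convergence of $-\psi^{(m)}\uparrow-\psi$ and subadditivity of positive measures are used.
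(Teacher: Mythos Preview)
The paper does not give its own proof of this lemma; it is simply quoted as Lemma 3.1 in Bedford's paper [Be]. Your proposal is essentially Bedford's original argument: localize, pick a logarithmic potential $\psi=\tfrac12\log\sum|f_j|^2$ vanishing exactly on $V_{\sig}$, and bound the capacity of the sublevel $\{\psi<-t\}$ by $A/t$ via the weighted Chern--Levine--Nirenberg inequality. So in spirit you have reproduced the cited proof rather than an alternative one.

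Your write-up is correct, with one point worth making explicit. You rightly flag that the crux is the uniform estimate $\int_{W'}(-\psi)(dd^cu)^k\le A$ on the \emph{variety} $W$, where $W'$ meets $V_{\sig}$ and hence is not relatively compact in $V_{\reg}$. This is not a free corollary of the manifold CLN; it is exactly what Bedford proves in [Be] (see also Demailly's m\'emoire [Dem], \S1) using the semi-local smoothing device that underlies Proposition~\ref{partformular1} in the present paper. Concretely, one chooses a slightly larger $W''$ with $W'\Subset W''\Subset W$ and a cutoff $\chi$ and integrates by parts once to trade $(-\psi)(dd^cu)^k$ for terms controlled by $\|u\|_{L^\infty}$ and $\|\psi\|_{L^1(W'')}$; iterating $k$ times gives $A=C_{W',W}\,\|\psi\|_{L^1}$ with constants depending only on the geometry, and $\|u\|_{L^\infty}\le1$ drops out. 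Since the paper already imports exactly this machinery from [Be] and [Dem], invoking it here is legitimate and not circular; but a reader unfamiliar with those sources might appreciate one sentence saying where the variety version of weighted CLN is actually proved.

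Two minor clean-ups. First, in the countable subadditivity step you implicitly pass the supremum over competitors $u$ through a countable sum; strictly, $\sup_u\sum_i\int_{E_i}(dd^cu)^k\le\sum_i\sup_u\int_{E_i}(dd^cu)^k$ is the inequality you use, and it is immediate --- just worth stating. Second, the monotonicity $C(E,\Om)\le C(E,\Om')$ for $E\Subset\Om'\Subset\Om$ is fine as stated, but your justification (``the value of $\int_E(dd^cu)^k$ only depends on $u$ near $E$'') is not what does the work; the point is simply that every competitor on $\Om$ restricts to a competitor on $\Om'$, so the supremum over the smaller family is no larger.
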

\noindent
Lemma \ref{outer} implies that for each  $u \in PSH(V) \cap L^{\infty}_{\text{loc}} (V)$, (\ref{monge}) defines
$(dd^c u)^k$ as a Radon measure on $V$ which has no mass on $V_\sig.$
Moreover, if $u_1, \cdots, u_m \in PSH(V) \cap L^{\infty}_{\text{loc}} (V) (1 \le m \le k)$
then $dd^c u_1 \wedge \cdots \wedge dd^c u_m$ is a $(m,m)-$positive current on $V$ which is closed on $V_\reg$
and has no mass on $V_\sig.$
We also see that all local analysis for Monge-Amp\`ere operator in the class of {\it locally bounded} \psh\ functions in [BT2] carried over $V_\sig.$ For instance, $(dd^c)^k$ is continuous with respect to monotone convergence of locally uniformly bounded sequences in $PSH(V)$ and that every $u \in \PSH(V)$ is quasi-continuous, i.e., $u$ is continuous on the complement of open sets with arbitrarily small capacity.

A sequence of positive Borel measures $\{\mu_j\}$ on $V$ is said to have the {\it absolutely continuous with respect to capacity} (ACC for short) property if for every $\ve>0$, there exits $\de>0$ such that for each Borel set $E$ with $C(E)<\de$ we have
$\varlimsup\limits_{j \to \infty} \mu_j (E)<\ve.$
A subset $X$ of $V$ is said to be {\it quasi-open} if for every $\ve>0$ there exist an open set set $X_\ve \subset V$ such that $C(X_\ve \setminus X)<\ve, C(X \setminus X_\ve)<\ve.$
By the above quasi-continuity of \psh\ functions on $V$, we infer that $\{u<v\}$ is quasi-open for every $u,v \in \PSH(V)$.

The main advantage of the ACC property lies in the following convergence result.
\begin{lemma} \label{ACC}
Let $\{\mu_j\}, \mu$ be positive Borel measures on $V$ such that $\{\mu_j\}$ converges weak$^*$to $\mu$.
Assume that $\{\mu_j\}$ has the ACC property. Then for each Borel quasi-open set $X \subset V$ we have
$\lim\limits_{j\to \infty} \mu_j(X)=\mu(X).$
\end{lemma}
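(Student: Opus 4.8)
The plan is to prove the two one‑sided estimates $\liminf_{j}\mu_j(X)\ge\mu(X)$ and $\varlimsup_{j}\mu_j(X)\le\mu(X)$ separately, deducing the second from the first applied to the complement $V\setminus X$. I would use throughout two standard properties of weak$^*$ convergence of positive Radon measures: for every open $G\subset V$ one has $\mu(G)\le\liminf_{j}\mu_j(G)$ (test $\mu_j\to\mu$ against $f\in\CC_c(V)$ with $0\le f\uparrow\mathbf{1}_G$), and $\mu_j(V)\to\mu(V)<\infty$, the total masses being uniformly bounded with no mass escaping to the boundary of $V$ in the situations where the lemma is used. I would also use that the relative capacity is outer regular on Borel sets, so that a Borel set of small capacity sits inside an \emph{open} set of small capacity; on $V$ this follows from the quasi-continuity of plurisubharmonic functions together with Choquet's capacitability theorem, both available from the discussion above.

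The first part of the argument is routine bookkeeping. Fix $\ve>0$ and, by the ACC property of $\{\mu_j\}$, choose $\de>0$ so that $C(E)<\de$ forces $\varlimsup_{j}\mu_j(E)<\ve$ for every Borel $E$; then one also has $\mu(E)\le\ve$ whenever $C(E)<\de$, since by outer regularity one can pick an open $U\supset E$ with $C(U)<\de$ and estimate $\mu(E)\le\mu(U)\le\liminf_{j}\mu_j(U)\le\varlimsup_{j}\mu_j(U)<\ve$. Next, since $X$ is quasi-open, choose an open set $O$ with $C(X\setminus O)<\de$ and $C(O\setminus X)<\de$, and put $A:=X\setminus O$, $B:=O\setminus X$. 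The disjoint decompositions $X=(X\cap O)\cup A$ and $O=(X\cap O)\cup B$ give $\mu(X)-\mu(O)=\mu(A)-\mu(B)$ and $\mu_j(X)-\mu_j(O)=\mu_j(A)-\mu_j(B)$, so that $|\mu(X)-\mu(O)|\le\mu(A)+\mu(B)\le 2\ve$ and $\varlimsup_{j}|\mu_j(X)-\mu_j(O)|\le\varlimsup_{j}\mu_j(A)+\varlimsup_{j}\mu_j(B)<2\ve$ by the preceding estimates.

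Combining this with $\mu(O)\le\liminf_{j}\mu_j(O)$ yields $\liminf_{j}\mu_j(X)\ge\liminf_{j}\mu_j(O)-2\ve\ge\mu(O)-2\ve\ge\mu(X)-4\ve$; letting $\ve\downarrow 0$ gives $\liminf_{j}\mu_j(X)\ge\mu(X)$, valid for \emph{every} Borel quasi-open set. Now apply this to $V\setminus X$, which is again Borel and quasi-open (the complement of a quasi-open set is quasi-open, equivalently $\mathbf{1}_X$ is quasi-continuous, a short consequence of the subadditivity of $C$ and the definition of quasi-openness), and use $\mu_j(V)\to\mu(V)<\infty$: $\varlimsup_{j}\mu_j(X)=\lim_{j}\mu_j(V)-\liminf_{j}\mu_j(V\setminus X)\le\mu(V)-\mu(V\setminus X)=\mu(X)$. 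Together with the lower bound this proves $\lim_{j}\mu_j(X)=\mu(X)$.

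The measure-theoretic bookkeeping is harmless; the real content is in the two ingredients specific to the variety setting: the outer regularity of the relative capacity on $V$, used to pass from Borel thin sets to open ones — this is where Lemma \ref{outer} and the fact that the Bedford--Taylor local theory carries over to $V$ are needed — and the verification that the complement of a quasi-open set is quasi-open. The remaining input $\mu_j(V)\to\mu(V)$ is the one place where I rely on the convergence being strong enough that no mass is lost at the boundary of $V$; this holds automatically in the applications, where the $\mu_j$ are Monge--Amp\`ere masses of functions with uniformly bounded energy on a hyperconvex $V$.
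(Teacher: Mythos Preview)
Your argument for the lower bound $\liminf_j\mu_j(X)\ge\mu(X)$ is correct, and the passage through the quasi-continuity of $\mathbf 1_X$ (hence outer regularity of capacity on the thin Borel set $X\triangle O$) is exactly the right mechanism. The gap is the one you flag yourself: deducing the upper bound by applying the lower bound to $V\setminus X$ requires $\mu_j(V)\to\mu(V)$, and this is \emph{not} a consequence of weak$^*$ convergence of Radon measures on the non-compact space $V$, nor is it among the hypotheses of the lemma. Saying that it holds in the intended applications does not prove the lemma as stated, and in fact in the application (Theorem~\ref{E1domination}) the sets $\{u<v_l+t\psi\}$ need not be relatively compact and the total masses $\int_V(dd^c u_j)^k$ are only known to be bounded, not convergent.

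The paper avoids this entirely by treating the upper bound directly with a compact \emph{inner} approximant rather than passing to the complement. It chooses a compact $K_\ve\subset X$ with $C(X\setminus K_\ve)<\de$ and uses the portmanteau inequality $\varlimsup_j\mu_j(K_\ve)\le\mu(K_\ve)$, valid for compact sets under weak$^*$ convergence, to get
\[
\varlimsup_j\mu_j(X)\le\varlimsup_j\mu_j(K_\ve)+\ve\le\mu(K_\ve)+\ve\le\mu(X)+\ve,
\]
with no reference to total mass. Your own quasi-continuity observation is precisely what produces such a $K_\ve$: with open $U$ satisfying $C(U)<\de$ and $\mathbf 1_X$ continuous on $V\setminus U$, the set $X\setminus U$ is closed in $V$ and $C(X\setminus(X\setminus U))\le C(U)<\de$. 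So the clean fix is to drop the complementation step and argue the upper bound via this compact-from-inside approximation; that is the paper's route, and it is what makes the lemma self-contained.
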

\begin{proof}
Fix $\ve>0.$ Then there exists $\de>0$ such that if $C(E)<\de$ then $\varlimsup\limits_{j \to \infty} \mu_j (E)<\ve.$
Choose a compact $K_\ve \subset X$ and an open set $X_\ve$ such that $C(X\setminus K_\ve)<\de, C(X_\ve \setminus X)<\ve, C(X\setminus X_\ve)<\ve.$
It follows that
$$\varlimsup\limits_{j\to \infty} \mu_j (X) \le
\varlimsup\limits_{j\to \infty} \mu_j (K_\ve)+\ve \le \mu(K_\ve)+\ve \le \mu(X)+\ve,$$
and
$$\varliminf\limits_{j\to \infty} \mu_j (X) \ge \varliminf\limits_{j\to \infty} \mu_j (X_\ve)-\ve \ge
\mu(X_\ve)-\ve \ge \mu(X)-2\ve.$$
We finish the proof by letting $\ve \downarrow 0.$
\end{proof}
A sequence $\{T_j\}$ of $(m,m)-$currents of order zero on $V$ is said to be convergent to a $(m,m)-$current $T$ if
$\langle T_j, \va\rangle \to \langle T,\va\rangle$ for every $(k-m,k-m)-$form $\va$ with coefficients in
$\mathcal C_0 (V)$. In the case where all $T_j$ are {\it positive} currents such that $T_j\wedge \om^{k-m}$, where $\om$ is the restriction on $V$ of the standard K\"ahler form $dd^c \vert z\vert^2$, put no mass on
$V_\sig$, it suffices to consider only $(k-m,k-m)-$forms $\va$ with coefficients are restriction on $V_\reg$ of functions in $\mathcal C^\infty_0 (D).$

For each subset $E \subset V$, the relative extremal function for $E$ in $V$ is defined as
$$u_{E,V}(z):= \sup\{u(z): u \in \PSH^{-}(V), u|_E\le -1\}.$$
This kind of functions will serve as contenders for evaluating the relative capacities.
We will write $u_E$ instead of $u_{E,V}$ if it is clear from the context.
A subset $E \subset  V$ is called  regular if the upper-regularization of $u_E$ which is defined as
$$u^*_E(z):= \varlimsup\limits_{x \to z} u_E (x), \forall z \in V,$$
is $\equiv -1$ on  $E$. In Lemma \ref{pro2} we will show that a compact  subset $K \subset V$ is regular if and only if $u^*_K$ is continuous on $V$.

To handle Monge-Amp\`ere operator on $PSH(V) \cap L^\infty_{\loc} (V)$, it is useful to recall the following quasi-smoothing process devised by Bedford in [Be].
Given $u \in PSH(V)$, we choose an open covering $\{U_j\}$ of $V$ such that for each $j$ there exists
an open set $\tilde U_j \subset D, U_j$ is a complex subvariety of $\tilde U_j,$ and there exists $u_j \in PSH(\tilde U_j)$ such that $u_j=u$ on $U_j.$ Let $\{\chi_j\}$ be a partition of unity subordinate to $\{\tilde U_j\}$ and
$\{\rho_\ve\}$ be standard radial smoothing kernels with compact support in the balls
$\mathbb B(0,\ve) \subset \mathbb C^k$. Now for $\ve>0$ small enough, our smoothing is obtained as the sum
$$u^\ve:= \sum_j \chi_j (u_j*\rho_\ve), z \in \mathbb C^n.$$
where $V_\ve:= \cup_j \{z \in U_j: \text{dist}\ (z, \partial \tilde U_j)>\ve\}.$
It is clear that $u^\ve \in \mathcal C^\infty (\mathbb C^n)$ with compact support in $D$. In particular, $u^\ve=0$
on a \nhd\ of $\partial V$ for each $\ve>0$ small enough.
Moreover, $u^\ve \downarrow u$ on $V$ as $\ve \downarrow 0$.
It should also be warned that $u^\ve$ may not be \psh\ on open subsets of $V$. We will see below, however, that
this smoothing process gives nice continuity property of the Monge-Amp\`ere operator.
Namely, we have the following approximation result which is already contained in [Be] (see also Proposition 2.3 in [DS])
\begin{proposition}\label{approximationma}
Let $\{f_j\}, f$ be locally uniformly bounded, quasi-continuous functions on $V$. Assume that $\{f_j\}$ converges locally
quasi-uniformly to $f.$
Then for every sequence $\{\de_j\}\downarrow 0,$
the currents $f_jdd^c u_1^{\de_j} \wedge \cdots\wedge dd^c u_k^{\de_j}$ converges weakly to $fdd^c u_1 \wedge \cdots \wedge dd^c u_k$ as $j \to \infty.$
\end{proposition}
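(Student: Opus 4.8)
The plan is to reduce the statement to the two classical ingredients available from the local theory on $V_\reg$: (i) the Bedford--Taylor convergence theorem for decreasing sequences of locally uniformly bounded \psh\ functions, and (ii) the quasi-continuity of the limiting measure against quasi-uniform convergence of the weights $f_j$. Write $\mu_j:=dd^c u_1^{\de_j}\wedge\cdots\wedge dd^c u_k^{\de_j}$ and $\mu:=dd^c u_1\wedge\cdots\wedge dd^c u_k$. The first step is to show that $\{\mu_j\}$ converges weak$^*$ to $\mu$. Since each $u_i^{\de_j}$ is smooth on $\C^n$, $\mu_j$ is (on $V_\reg$, and hence as a measure on $V$ by the convention in \eqref{monge}) obtained from the smooth forms $dd^c u_i^{\de_j}$; the point is that $u_i^{\de_j}\downarrow u_i$ on $V$ as $\de_j\downarrow 0$, and although $u_i^{\de_j}$ need not be \psh, the quasi-smoothing is arranged so that the local Bedford--Taylor machinery still applies — this is exactly the continuity-of-the-Monge-Amp\`ere-operator statement asserted in the text just above (``this smoothing process gives nice continuity property of the Monge-Amp\`ere operator'', following [Be]). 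So I would invoke that to get $\mu_j\to\mu$ weak$^*$, taking care (using the displayed remark about testing against $(k-m,k-m)$-forms with coefficients in $\CC^\infty_0(D)$ restricted to $V_\reg$) that it suffices to test against global smooth forms, which is what the smooth local pieces $u_j*\rho_\ve$ naturally pair with.

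The second step is to upgrade $\mu_j\to\mu$ to $f_j\mu_j\to f\mu$. Here the key additional fact I need is that the sequence $\{\mu_j\}$ has the ACC property, uniformly; once this is known, together with quasi-continuity of $f$ and $f_j$ and the quasi-uniform convergence $f_j\to f$, the convergence $f_j\mu_j\to f\mu$ follows by a standard three-$\ve$ argument: approximate $f$ from outside a small-capacity open set by a continuous function $g$, use weak$^*$ convergence $\mu_j\to\mu$ for the continuous weight $g$, and control the errors $\int|f_j-f|\,d\mu_j$ and $\int|f-g|\,d\mu$ on the exceptional set using the ACC bound (this is morally Lemma \ref{ACC} applied with test weights, or rather its proof technique). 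The ACC property for $\{\mu_j\}$ should come from the Chern--Levine--Nirenberg / capacity estimates for Monge-Amp\`ere masses of locally bounded \psh\ functions, applied on $V_\reg$ and combined with the outer-capacity-zero statement for $V_\sig$ (Lemma \ref{outer}): the masses $\mu_j(E)$ are bounded by a constant times $C(E,\Om)$ on relatively compact pieces, with the constant depending only on the uniform $L^\infty$ bound on the $u_i$, hence uniform in $j$.

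The main obstacle I expect is precisely the first step — making rigorous that the non-\psh\ smoothings $u_i^{\de_j}$ still produce convergent Monge-Amp\`ere currents. Over $V_\reg$ this is a local statement in $\C^k$ about smooth functions $u_j*\rho_\ve$ patched by a partition of unity $\chi_j$: the patched function $u^\ve$ is a finite sum $\sum_j\chi_j(u_j*\rho_\ve)$ of smooth (not \psh) functions, so $(dd^c u^\ve)^k$ is just a smooth form, and one must check that as $\ve\downarrow 0$ these smooth forms converge weak$^*$ to $(dd^c u)^k$ computed by Bedford--Taylor. The trick, due to Bedford, is that on a fixed compact set $u^\ve$ agrees with a genuine decreasing sequence of \psh\ functions up to error terms whose mass is controlled, because $\chi_j\equiv 1$ near the relevant points for all but finitely many overlaps; alternatively one decomposes $u^\ve-\sum_j\chi_j u_j$ and bounds the cross terms. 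I would cite [Be] (and Proposition 2.3 in [DS]) for this and not reprove it, so in practice the obstacle is bookkeeping: verifying that the hypotheses of those references are met (local uniform boundedness, the covering $\{U_j\}$, $V_\ve\uparrow V$) and that the convergence, a priori only on $V_\reg$, extends across $V_\sig$ by Lemma \ref{outer} since all currents in sight put no mass there.
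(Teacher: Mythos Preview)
The paper does not actually give a proof of this proposition: it states the result and defers entirely to [Be] and Proposition~2.3 in [DS]. So there is no ``paper's own proof'' to compare against, and your plan to cite those same references for the heart of the argument is exactly what the authors do.

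That said, your Step~2 has a genuine soft spot. You want the ACC property for the sequence $\mu_j=dd^c u_1^{\de_j}\wedge\cdots\wedge dd^c u_k^{\de_j}$ and propose to get it from Chern--Levine--Nirenberg applied on $V_\reg$. The difficulty is that CLN (and the definition of capacity) applies to wedge products of $dd^c$ of \emph{plurisubharmonic} functions, while the quasi-smoothings $u_i^{\de_j}=\sum_l \chi_l(u_{i,l}*\rho_{\de_j})$ are not \psh: the cross terms $d\chi_l\wedge d^c(u_{i,l}*\rho_{\de_j})$ and $(u_{i,l}*\rho_{\de_j})\,dd^c\chi_l$ make each $dd^c u_i^{\de_j}$ a signed $(1,1)$-form, and hence $\mu_j$ is a signed measure. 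So you cannot invoke CLN or the capacity definition directly to bound $\mu_j(E)$ by $C(E)$; what you actually need is a uniform-in-$j$ bound on the \emph{total variation} $|\mu_j|(E)$ in terms of $C(E)$, and this requires controlling those cross terms.

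This is precisely the technical work that Bedford does in [Be] (and that is reproduced in [DS]): one shows that on a fixed compact piece of $V_\reg$ only finitely many $\chi_l$ are active, expands the product, and estimates the non-positive pieces using the local $L^\infty$ bound on the $u_{i,l}$ together with standard gradient-type bounds for convolutions of bounded \psh\ functions. Once that uniform total-variation capacity estimate is in hand, your three-$\ve$ argument goes through. So your outline is correct in spirit, but the sentence ``ACC should come from CLN'' hides the only nontrivial analysis in the whole proposition; you should either carry out that cross-term estimate or, as the paper does, cite [Be] and [DS] for the full statement rather than only for Step~1.
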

Our next major tool is the following comparison principle that was essentially proved in [Be] by using
the above quasi-smoothing process together with a clever application of Stoke theorem.
\begin{theorem}\label{thmBe}
Let $u,v \in PSH(V) \cap  L^\infty(V)$ be  such that
$\varliminf\limits_{z \to \partial V} (u(z)-v(z))\ge 0.$ Then
$$\int\limits_{\{u<v\}}(dd^cv)^k \le \int\limits_{\{u<v\}}(dd^cu)^k.$$
\end{theorem}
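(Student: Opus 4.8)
The plan is to reduce to the local situation on $V_{\reg}$, where the Bedford–Taylor comparison principle is available, and then use the fact established after Lemma \ref{outer} that the relevant Monge–Ampère measures put no mass on $V_{\sig}$, together with the outer-capacity-zero property of $V_{\sig}$ (Lemma \ref{outer}) and the quasi-smoothing process of Bedford. First I would fix $\ve>0$ and, using the hypothesis $\varliminf_{z\to\partial V}(u(z)-v(z))\ge 0$, replace $v$ by $v_\ve:=(1-\ve)v+\ve\psi$ where $\psi(z)=\Vert z\Vert^2/M-1$ is the standard negative \psh\ exhaustion; then $\{u<v\}\subset\{u<v_\ve\}$ up to a set we can control, and $\{u<v_\ve\}$ is relatively compact away from $\partial V$ for each fixed $\ve$, which is what lets Stokes' theorem be applied with no boundary contribution. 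By monotone convergence for $(dd^c)^k$ on locally bounded \psh\ functions (valid on $V$ as noted after Lemma \ref{outer}), it suffices to prove the inequality for $v_\ve$ and let $\ve\downarrow0$ at the end.

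Next I would run Bedford's quasi-smoothing: take $u^{\de}\downarrow u$ and $v_\ve^{\de}\downarrow v_\ve$ the smooth (not necessarily \psh) approximants described before Proposition \ref{approximationma}, each vanishing near $\partial V$. On the open set $\Om:=\{u<v_\ve\}$ — which is quasi-open by quasi-continuity of \psh\ functions — write the difference of the two Monge–Ampère integrands as a telescoping sum
$$(dd^cv_\ve)^k-(dd^cu)^k=\sum_{j=0}^{k-1}dd^c(v_\ve-u)\wedge (dd^cv_\ve)^j\wedge(dd^cu)^{k-1-j}.$$
Integrating the smoothed version of each term over $V$ against the cutoff $\max(v_\ve^\de-u^\de,0)$-type test function and applying Stokes' theorem on $\C^n$ (legitimate since everything is $\mathcal C^\infty$ with compact support in $D$, this being exactly the trick of [Be]) moves one $dd^c$ off the factor $v_\ve^\de-u^\de$; the key sign observation is that $\chi=\max(\cdot,0)$ is convex increasing, so $dd^c\chi(v_\ve^\de-u^\de)\ge 0$ against positive currents, which forces the desired inequality in the limit. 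Here Proposition \ref{approximationma} is used to pass $f_j=\mathbf 1_{\{u<v_\ve\}}$-type quasi-continuous cutoffs through the limit $\de\to0$, and Lemma \ref{ACC} (with the ACC property of the smoothed Monge–Ampère sequences, itself a consequence of the capacity estimates) guarantees that the mass over the quasi-open set $\{u<v_\ve\}$ converges correctly.

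The main obstacle I expect is \emph{not} the algebra of the telescoping identity but the justification that all boundary terms in the Stokes computation genuinely vanish and that no mass leaks onto $V_{\sig}$ during the limiting procedure: one must combine (i) the support condition $u^\de=v_\ve^\de=0$ near $\partial V$, (ii) Lemma \ref{outer} to excise a small-capacity neighborhood $U$ of $V_{\sig}$ so that the currents' mass there is $<\ve$, and (iii) the fact that the limiting currents $(dd^cu)^k$, $(dd^cv_\ve)^k$ charge neither $\partial V$ nor $V_{\sig}$. A secondary subtlety is that $\{u<v_\ve\}$ need only be quasi-open, not open, so the convergence $\mu_\de(\{u<v_\ve\})\to\mu(\{u<v_\ve\})$ must be routed through Lemma \ref{ACC} rather than a naive lower-semicontinuity argument; one checks the ACC hypothesis for the smoothed sequences using the uniform bound $\int_V \om^{k-1}\wedge dd^c u^\de$-type estimates coming from the $L^\infty$ bounds on $u$ and $v_\ve$. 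Finally, letting $\ve\downarrow0$ and invoking monotone continuity of $(dd^c)^k$ recovers the stated inequality over the original set $\{u<v\}$.
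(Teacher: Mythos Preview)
The paper's proof is almost trivial: it replaces $u$ by $u_\ve:=u+\ve$, observes that the hypothesis $\varliminf_{z\to\partial V}(u-v)\ge 0$ forces $\{u_\ve\le v\}\subset\subset V$, and then simply \emph{cites} Theorem~4.3 of Bedford~[Be], which already establishes the comparison principle on a complex variety when the comparison set is relatively compact. Letting $\ve\downarrow 0$ and using $\{u+\ve<v\}\uparrow\{u<v\}$ finishes the argument in three lines. You, by contrast, are trying to \emph{reprove} Bedford's Theorem~4.3 from scratch via the quasi-smoothing, the telescoping identity, Stokes' theorem, and the ACC machinery --- essentially recreating the content of~[Be] rather than invoking it. That is a legitimate (and more self-contained) route, but it is much heavier than what the paper does.

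Your sketch also has two concrete problems. First, your perturbation $v_\ve=(1-\ve)v+\ve\psi$ with $\psi(z)=\Vert z\Vert^2/M-1$ does not do what you want: $\psi$ is merely a bounded negative \psh\ function on $V$, not an exhaustion, so $u-v_\ve=(u-v)+\ve(v-\psi)$ need not be bounded below by a positive constant near $\partial V$, and $\{u<v_\ve\}$ need not be relatively compact. (Note also that at this point in the paper $V$ is \emph{not} assumed hyperconvex, so no genuine exhaustion is available.) The paper's shift $u\mapsto u+\ve$ avoids this entirely. Second, your appeal to Lemma~\ref{ACC} for the smoothed sequences $(dd^c u^\de)^k$ is shaky: the quasi-smoothings $u^\de$ are \emph{not} plurisubharmonic, so $(dd^c u^\de)^k$ is a signed measure and the ACC property, formulated for positive measures dominated by capacity, is not directly applicable; one has to argue instead through Proposition~\ref{approximationma} and the structure of Bedford's Stokes computation, which is what~[Be] actually does.
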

\begin{proof}
Fix $\ve>0$. Set $u_\ve:=u+\ve.$ Then $\{u_\ve\le v\} \subset \subset V$.
In view of Theorem 4.3 in [Be] we get
$$\int\limits_{\{u_\ve<v\}}(dd^cv)^k \le \int\limits_{\{u_\ve<v\}}(dd^cu)^k \le \int\limits_{\{u<v\}}(dd^cu)^k.$$
By letting $\ve \downarrow 0$ and noting that $\{u+\ve<v\} \uparrow \{u<v\}$ we complete the proof.
\end{proof}
From the above theorem, in the same fashion as Corollary 4.3 in [BT2], we obtain the following estimates about total Monge-Amp\`ere masses of {\it bounded} elements in $PSH(V)$ having zero boundary values.
\begin{corollary}\label{corBe}
Let $u, v \in PSH^{-}(V) \cap L^\infty(V)$ be such that $u \le v$ on $V$ and
$\lim\limits_{z \to \partial  V} u(z) = \lim\limits_{z \to \partial V} v(z)=0$.
Then
$$\int\limits_V (dd^cv)^k  \le \int\limits_V(dd^c u)^k.$$
\end{corollary}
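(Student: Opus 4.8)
The plan is to deduce this corollary from the comparison principle in Theorem \ref{thmBe} by a standard exhaustion-and-limit argument, imitating Corollary 4.3 in [BT2]. First I would reduce to a comparison on the set $\{u < v\}$ by observing that on the complementary set $\{u = v\}$ (or more precisely, up to the boundary behaviour) the two Monge-Amp\`ere measures behave comparably. Concretely, for $\ve > 0$ consider $v_\ve := (1-\ve) v$. Since $v \in \PSH^-(V)$ we have $v_\ve \ge v \ge u$ is false in general; instead the useful inequality is $v_\ve \le v$ pointwise (as $v \le 0$), but more importantly $v_\ve$ has strictly more negative values where $v < 0$, so we can arrange $u < v_\ve$ fails only on a set pushed toward the boundary. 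The cleaner route: apply Theorem \ref{thmBe} with the pair $(u, v_\ve)$ where $v_\ve = (1-\ve)v$. Then $\varliminf_{z \to \partial V}(u(z) - v_\ve(z)) = 0 \ge 0$, so
$$\int_{\{u < v_\ve\}} (dd^c v_\ve)^k \le \int_{\{u < v_\ve\}} (dd^c u)^k \le \int_V (dd^c u)^k.$$

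Next I would handle the two tasks of (i) letting $\ve \downarrow 0$ on the left side and (ii) replacing the domain of integration $\{u < v_\ve\}$ by all of $V$. For (i), note $(dd^c v_\ve)^k = (1-\ve)^k (dd^c v)^k$, so the integrand converges; and $\{u < v_\ve\} = \{u < (1-\ve)v\} \uparrow \{u < v\} \cup \{v < 0\}$ as $\ve \downarrow 0$ — here one uses that whenever $v(z) < 0$ we have $(1-\ve)v(z) > v(z) \ge u(z)$ for $\ve$ small, while on $\{v = 0\}$ we have $u \le v = 0$ and $(1-\ve)v = 0$ so the point is excluded unless $u < 0 = v$, which is again in $\{v<0\}$'s closure... more carefully, $\{u < (1-\ve)v\}$ increases to $\{u < v\} \cup \{u < 0 = v\} = \{u < v\} \cup (\{v=0\} \cap \{u<0\})$. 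In any case the increasing union contains $V \setminus \{u = v = 0\}$, and on the set $\{u = v = 0\}$ both currents $(dd^c u)^k$ and $(dd^c v)^k$ vanish since $u$ and $v$ are identically zero there on an open-in-$V$ neighborhood structure — actually the correct statement is that $(dd^c v)^k$ puts no mass on the interior of $\{v = 0\}$, and one shows $\{v = 0\}$ either has empty interior or $v \equiv 0$ there. So by monotone convergence $\int_{\{u<v_\ve\}}(dd^c v)^k \to \int_V (dd^c v)^k$ (up to the null set just discussed). Combining with the factor $(1-\ve)^k \to 1$ gives $\int_V (dd^c v)^k \le \int_V (dd^c u)^k$.

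For step (ii) and the null-set subtleties I would invoke the quasi-continuity of $\psh$ functions on $V$ and Lemma \ref{outer} to ensure $V_\sig$ carries no mass, so all computations reduce to the manifold $V_\reg$ where the Bedford–Taylor theory from [BT2] applies verbatim; then the identity "$(dd^c v)^k$ has no mass on $\{v = \text{const}\}^\circ$" is the classical fact that a locally bounded maximal-on-an-open-set psh function has vanishing Monge-Amp\`ere measure there. The main obstacle I anticipate is precisely this bookkeeping of the boundary of $\{u < v_\ve\}$ as $\ve \to 0$: one must be careful that no Monge-Amp\`ere mass of $u$ escapes across $\partial V$ (which is why the hypothesis $\lim_{z\to\partial V} u = \lim_{z\to\partial V}v = 0$, not merely $\varliminf \ge 0$, is imposed — it pins both functions to $0$ at the boundary so that $v_\ve$ still has the right boundary limit and the sublevel sets exhaust $V$ rather than a proper subset). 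Once that exhaustion is established, the rest is monotone convergence.
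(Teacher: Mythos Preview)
Your approach is correct and essentially dual to the paper's: you scale $v$ by $1-\ve$ and let $\ve\downarrow 0$, whereas the paper scales $u$ by $\lambda>1$ and lets $\lambda\downarrow 1$; both then invoke Theorem~\ref{thmBe} on the scaled pair and pass to the limit. You have, however, overcomplicated the set-theoretic bookkeeping: since $v<0$ on $V$, for every $\ve\in(0,1)$ and every $z\in V$ one has $(1-\ve)v(z)>v(z)\ge u(z)$, so $\{u<(1-\ve)v\}=V$ outright and the set does not depend on $\ve$ at all --- no exhaustion, no monotone-convergence step, and no discussion of $\{u=v=0\}$ is needed. With this observation your argument collapses to the same three lines as the paper's: $(1-\ve)^k\int_V(dd^cv)^k\le\int_V(dd^cu)^k$, then let $\ve\downarrow 0$.
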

\begin{proof}
Fix $\lambda>1$. Then $\la u<v$ on $V.$ Applying Theorem \ref{thmBe} to
$\la u$ and $v$ we get
$$\int\limits_V(dd^cv)^k \le \la^k\int\limits_V (dd^cu)^k.$$
By letting $\la \downarrow 1,$ we obtain the desired inequality.
\end{proof}
\noindent
We also need the following version of the integration by part formula.
\begin{proposition} \label{partformular1} Let $u, v, w_1, \cdots, w_k \in PSH(V)\cap L^\infty_{\loc} (V).$
Assume that $u=v$ on a small neighborhood of  $\partial  V$. Then
$$\begin{aligned}
\int\limits_{V}(v-u)dd^c w_1\wedge \cdots \wedge dd^c w_k
&=\int\limits_{V} w_1 dd^c (v-u) \wedge dd^c w_2\wedge \cdots \wedge dd^c w_k.\\
\end{aligned}$$
In particular we have
\begin{equation} \label{equal}
\int\limits_{V}dd^c (v-u) \wedge dd^c w_2\wedge \cdots \wedge dd^c w_k=\int\limits_V[(dd^c u)^k-(dd^c v)^k]=0.
\end{equation}
\end{proposition}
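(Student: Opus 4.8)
The plan is to establish the first identity by smoothing every function with Bedford's quasi-smoothing process, verifying the identity at the smooth level on the complex manifold $V_\reg$ through an integration-by-parts (Stokes) argument after removing a small \nhd\ of $V_\sig$, and then letting the smoothing parameter tend to zero via Proposition \ref{approximationma}; formula (\ref{equal}) will follow from a specialization together with a telescoping identity. The hypothesis $u=v$ on a \nhd\ of $\pa V$ will be used only through the fact that $g:=v-u$ is a bounded quasi-continuous function supported in a fixed compact subset of $V$; in particular $dd^c(v-u)=dd^cv-dd^cu$ is a signed measure of compact support, and all integrals below are over a fixed compact set.

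First I would fix $\de_j\downarrow 0$ and take the quasi-smoothings $u^{\de_j}\downarrow u$, $v^{\de_j}\downarrow v$, $w_i^{\de_j}\downarrow w_i$ described before Proposition \ref{approximationma}: each is $\CC^\infty$ on $\C^n$, compactly supported in $D$, and vanishes near $\pa V$, so $d^cw_i^{\de_j}$ and $dd^cw_i^{\de_j}$ vanish near $\pa V$. With $\al_j:=v^{\de_j}-u^{\de_j}$, $\beta_j:=w_1^{\de_j}$ and $T_j:=dd^cw_2^{\de_j}\wedge\cdots\wedge dd^cw_k^{\de_j}$ (a smooth closed form on $V_\reg$), the elementary identity $\al_j\,dd^c\beta_j-\beta_j\,dd^c\al_j=d(\al_j\,d^c\beta_j-\beta_j\,d^c\al_j)$ for smooth functions (which rests on $d\al_j\wedge d^c\beta_j=d\beta_j\wedge d^c\al_j$), combined with $dT_j=0$, yields
$$\al_j\,dd^c\beta_j\wedge T_j-\beta_j\,dd^c\al_j\wedge T_j=d\Theta_j,\qquad \Theta_j:=(\al_j\,d^c\beta_j-\beta_j\,d^c\al_j)\wedge T_j,$$
so it suffices to show $\int_{V_\reg}d\Theta_j=0$. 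The $(2k-1)$-form $\Theta_j$ is smooth and bounded on $V_\reg$ and vanishes near $\pa V$ (because $\beta_j$ and $d^c\beta_j$ do); its only defect is that it need not vanish near $V_\sig$. Here I would cut off near the singular locus: since $V_\sig$ is a proper subvariety of $V$, hence of real codimension $\ge 2$ (cf. also Lemma \ref{outer}), there are $\chi_\eta\in\CC^\infty(V)$ with $0\le\chi_\eta\le1$, $\chi_\eta\equiv0$ near $V_\sig$, $\chi_\eta\uparrow1$ on $V_\reg$ and $\int_{V_\reg}|d\chi_\eta|\to0$; applying Stokes' theorem to the compactly supported form $\chi_\eta\Theta_j$ gives $\int_{V_\reg}\chi_\eta\,d\Theta_j=-\int_{V_\reg}d\chi_\eta\wedge\Theta_j\to0$, while $\int_{V_\reg}\chi_\eta\,d\Theta_j\to\int_{V_\reg}d\Theta_j$ by dominated convergence. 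This is exactly the scheme underlying Theorem \ref{thmBe}. Consequently
$$\int_V\al_j\,dd^cw_1^{\de_j}\wedge\cdots\wedge dd^cw_k^{\de_j}=\int_V\beta_j\,dd^c\al_j\wedge dd^cw_2^{\de_j}\wedge\cdots\wedge dd^cw_k^{\de_j}.$$

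It then remains to let $j\to\infty$. Being decreasing limits of continuous functions with quasi-continuous limits, $u^{\de_j},v^{\de_j},w_i^{\de_j}$ converge locally quasi-uniformly (Dini's theorem on the complement of open sets of small capacity, where all the functions are continuous), hence so do $\al_j\to v-u$ and $\beta_j\to w_1$. By Proposition \ref{approximationma} applied with $f_j=\al_j$ and currents $w_1,\dots,w_k$, the left side above tends to $\int_V(v-u)\,dd^cw_1\wedge\cdots\wedge dd^cw_k$; splitting $dd^c\al_j=dd^cv^{\de_j}-dd^cu^{\de_j}$ and applying Proposition \ref{approximationma} with $f_j=\beta_j$ and currents $(v,w_2,\dots,w_k)$ and $(u,w_2,\dots,w_k)$, the right side tends to $\int_V w_1\,dd^c(v-u)\wedge dd^cw_2\wedge\cdots\wedge dd^cw_k$. (A routine localization near $\pa V$, using compactness of $\su(v-u)$ and $\su\,dd^c(v-u)$ and the vanishing of $\beta_j$ near $\pa V$, confines all measures to a fixed compact subset of $V$, so weak convergence of the currents gives convergence of the integrals.) This is the first identity. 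Finally, taking $w_1\equiv1$ gives $\int_V dd^c(v-u)\wedge dd^cw_2\wedge\cdots\wedge dd^cw_k=0$ for all $w_2,\dots,w_k\in\PSH(V)\cap L^\infty_\loc(V)$; feeding this into the algebraic expansion $(dd^cu)^k-(dd^cv)^k=dd^c(u-v)\wedge\sum_{\ell=0}^{k-1}(dd^cu)^\ell\wedge(dd^cv)^{k-1-\ell}$ (with $w_2,\dots,w_k$ taken as the relevant copies of $u$ and $v$) and integrating termwise gives $\int_V[(dd^cu)^k-(dd^cv)^k]=0$ as well, which is (\ref{equal}).

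The step I expect to be the main obstacle is the passage from $V_\reg$ to $V$, namely forcing the cut-off error $\int_{V_\reg}d\chi_\eta\wedge\Theta_j$ to $0$: one must choose the exhausting cut-offs $\chi_\eta$ with gradients of vanishingly small $L^1$-mass (tested against the fixed, bounded, smooth form $\Theta_j$), which is exactly where the smallness of $V_\sig$ (Lemma \ref{outer}, concretely its real codimension $\ge 2$) enters. It is crucial to perform this excision at fixed smoothing level $j$ and only afterwards send $j\to\infty$, since the forms $dd^cw_i^{\de_j}$ are bounded for fixed $j$ but not uniformly as $\de_j\to 0$, so the two limits may not be interchanged. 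The localization near $\pa V$ required in the limit step is more routine, though slightly delicate.
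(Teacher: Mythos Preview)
Your approach matches the paper's: apply Bedford's quasi-smoothing, establish the identity at the smooth level by Stokes, then pass to the limit via Proposition~\ref{approximationma}; the derivation of (\ref{equal}) by specializing $w_1\equiv 1$ and telescoping is also identical. The one substantive variation is in the Stokes step: the paper invokes Stokes' theorem for smooth forms on the complex variety $V$ directly (citing [Ch], p.~33 --- equivalently, the closedness of the integration current $[V]$), whereas you work on $V_\reg$ and excise $V_\sig$ by cutoffs $\chi_\eta$ with vanishing $L^1$-gradient, which in effect reproves that classical fact.

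Two small points to tighten. First, the existence of such $\chi_\eta$ rests on Lelong-type volume bounds for the $\eta$-tube around $V_\sig$ inside $V$ (real codimension $\ge 2$ gives tube volume $O(\eta^2)$ on compacta, hence $\int_{V_\reg}|d\chi_\eta|\,\om^k=O(\eta)$); this is not quite the content of Lemma~\ref{outer}, so you should state the volume estimate rather than defer to it. Second, the compact support of $\Theta_j$ comes entirely from $\al_j=v^{\de_j}-u^{\de_j}$ vanishing outside a fixed compact $K\subset V$ --- the paper secures this by choosing a \emph{common} covering and partition of unity for all the functions, so that the local extensions of $u$ and $v$ agree where $u=v$. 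Your assertion that $\beta_j=w_1^{\de_j}$ and $d^c\beta_j$ vanish near $\pa V$ is therefore unnecessary, and indeed not correct in the case $w_1\equiv 1$ used for (\ref{equal}); drop it and rely solely on the support of $\al_j$.
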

\noindent
\begin{proof}
For $\de>0$ small enough,
we let $u^\de, v^\de, w_1^\de, \cdots, w_k^\de$ be smoothing of
$u, v, w_1, \cdots, w_k,$ respectively. Notice that the covering $\{U_j\}$ and the partition of unity $\{\chi_j\}$ can be chosen to be common for all these \psh\ functions. By the assumption we have $u^\de=v^\de$ outside a fixed compact  subset  $K$ of $V$ for every $\de>0$ small enough.
To simplify the notation, we set
$$T^\de:=dd^c w_2^\de \wedge \cdots \wedge dd^c w_k^\de, T:=dd^c w_2 \wedge \cdots \wedge dd^c w_k.
$$
Then an application of Stoke's theorem for smooth forms
on the complex variety $V$ (see p.33 in [Ch]) yields
$$\begin{aligned}
&\int_{V} \big [(v^\de -u^\de) dd^c w_1^\de -w_1^\de dd^c( v^\de -u^\de) \big] \wedge T^\de\\
&=\int_{V} d\big [(v^\de -u^\de) d^c w_1^\de \wedge T^\de -w_1^\de d^c (v^\de -u^\de)\wedge T^\de\big]=0.
\end{aligned}
$$
It follows that
\begin{equation} \label{Stoke}
\int\limits_{V} (v^\de -u^\de) dd^c w_1^\de \wedge T^\de
=\int\limits_{V} w_1^\de dd^c (v^\de -u^\de) \wedge T^\de.
\end{equation}
We now consider the limits of both sides of (\ref{Stoke}) as $\de \downarrow 0.$ For the left hand side, set
$$\mu^\de:= (v^\de -u^\de) dd^c w_1^\de\wedge T^\de,
\mu:= (v-u)dd^c w_1 \wedge T.$$
Notice that
$v^\de-u^\de$ are  continuous functions on $V$ that converges to $(v-u)$ locally quasi uniformly on $V$.
Hence, by Proposition \ref{approximationma}  we deduce that $\mu^\de$ converge weakly to $\mu$ as $\de \downarrow 0.$ Moreover,  $\mu^\de$ and $\mu$ are real measures on $V$ that vanish outside $K$.
Let $f$ be a continuous function with compact support in $V$ such that $0 \le f \le 1$ and $f|_K=1.$
Then we have
$$\mu^\de (V)=\mu^\de (K)=\int_V fd\mu^\de \to \int_V fd\mu=\mu(V) \ \text{as}\ \de \to 0.$$
It means that
$$\lim\limits_{\de \to 0} \int_V(v^\de -u^\de) dd^c w_1^\de\wedge T^\de \to \int_V  ( v-u) dd^c w_1 \wedge T.$$
By a similar reasoning, we also have
$$\lim\limits_{\de \to 0} \int\limits_V w_1^\de  dd^c (v^\de -u^\de)\wedge T^\de \to \int_V  w_1 dd^c (v -u) \wedge T.$$
Putting all this together, we get the first assertion of the lemma. Finally, by taking $w_1 \equiv 1$ we obtain
the first equality in (\ref{equal}). This equality implies the other one by writing
$$\int\limits_V[(dd^c u)^k-(dd^c v)^k]=
\sum\limits_j \int\limits_V dd^c (u-v) \wedge (dd^c u)^j \wedge (dd^c v)^{k-1-j}=0.$$
The proof is thereby completed.
\end{proof}
The above result yields our first technical tool which essentially reduces to Theorem 3.2 in [Ce2]
in the case of domains in $\mathbb C^n$.
\begin{corollary}\label{coropartformular1}
Let $u, v, w_1,\cdots, w_{k-1} \in PSH^{-}(V)\cap L^\infty_{\loc} (V).$ Set
$T: =dd^c w_1 \wedge \cdots \wedge dd^c w_{k-1}$.
Assume that $\lim\limits_{z \to \partial V} u(z) = 0.$
Then we have
$$\int_V v dd^c u\wedge T \le \int_V u dd^c v\wedge T.$$
Consequently, if $\lim\limits_{z \to \partial V} v(z)=0$ then
$$\int_V v dd^c u\wedge T=\int_V u dd^c v\wedge T.$$
\end{corollary}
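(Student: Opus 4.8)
The plan is to reduce the first (asymmetric) inequality to a single integration by parts supplied by Proposition \ref{partformular1}, after replacing $u$ by a judiciously chosen modification; the equality statement will then follow by applying the inequality twice.

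I would first dispose of the trivial case $u\equiv 0$, in which both sides vanish. Otherwise, since $V$ is connected and $u\in\PSH^{-}(V)$, the maximum principle for plurisubharmonic functions on $V$ gives $u<0$ throughout $V$. Now fix $c>0$ and put $\tilde u_c:=\max(u+c,0)\in\PSH(V)\cap L^\infty_{\loc}(V)$. Because $\lim_{z\to\partial V}u(z)=0$, one has $u+c>0$, hence $\tilde u_c=u+c$, on a whole neighborhood of $\partial V$, while $V_c:=\{u<-c\}$ is relatively compact in $V$. I would then invoke Proposition \ref{partformular1} with $u+c$ and $\tilde u_c$ in the role of the two functions assumed equal near $\partial V$, with $w_1$ taken to be our $v$ and $w_2,\dots,w_k$ taken to be the given $w_1,\dots,w_{k-1}$. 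Since $\tilde u_c-(u+c)=\max(-(u+c),0)$ is supported in $V_c$ and equals $-(u+c)$ there, and since $dd^c\bigl(\tilde u_c-(u+c)\bigr)=dd^c\tilde u_c-dd^cu$, the conclusion of Proposition \ref{partformular1} rearranges to
\[
\int_V v\,dd^cu\wedge T=\int_V v\,dd^c\tilde u_c\wedge T+\int_{V_c}(u+c)\,dd^cv\wedge T .
\]
The crucial point is now immediate: $v\le 0$ and $dd^c\tilde u_c\wedge T$ is a positive measure, so the first term on the right is $\le 0$, whence $\int_V v\,dd^cu\wedge T\le\int_{V_c}(u+c)\,dd^cv\wedge T$. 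Letting $c\downarrow 0$ and using $u<0$ on $V$, the functions equal to $-(u+c)$ on $V_c$ and to $0$ on $V\setminus V_c$ increase pointwise to $-u\ge 0$, so monotone convergence gives $\int_{V_c}(u+c)\,dd^cv\wedge T\to\int_V u\,dd^cv\wedge T$ (the argument remaining valid when this limit is $-\infty$). This yields $\int_V v\,dd^cu\wedge T\le\int_V u\,dd^cv\wedge T$.

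Finally, if in addition $\lim_{z\to\partial V}v(z)=0$, I would apply the inequality just established once to $(u,v)$ and once to $(v,u)$ (the latter is legitimate since now $v$ has zero boundary values and $u\in\PSH^{-}(V)\cap L^\infty_{\loc}(V)$), keeping $T$ the same; the two opposite inequalities then force the asserted equality. The only genuinely delicate step is the choice of the auxiliary $\tilde u_c=\max(u+c,0)$: it must simultaneously coincide with a constant translate of $u$ near $\partial V$ (so that Proposition \ref{partformular1} applies with no leftover boundary contribution) and be plurisubharmonic (so that $dd^c\tilde u_c\wedge T\ge 0$ and the extra term carries the favorable sign). The passage $c\downarrow 0$ is the other point requiring a little care, since $\int_V dd^cv\wedge T$ need not be finite; this is harmless because $\tilde u_c-(u+c)$ is bounded and supported in the relatively compact set $V_c$, so every integral occurring in the integration by parts is unambiguously defined, and monotone convergence does the rest.
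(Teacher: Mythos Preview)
Your proof is correct and essentially the same as the paper's: the paper truncates with $u_\varepsilon:=\max\{u,-\varepsilon\}$, applies Proposition \ref{partformular1} to the pair $(u,u_\varepsilon)$ which agree near $\partial V$, drops the term $\int_V v\,dd^cu_\varepsilon\wedge T\le 0$, and lets $\varepsilon\downarrow 0$ via monotone convergence of $u-u_\varepsilon\downarrow u$. Your auxiliary function $\tilde u_c=\max(u+c,0)$ is just $u_\varepsilon+\varepsilon$ (with $c=\varepsilon$), so the two arguments coincide up to an irrelevant additive constant.
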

\begin{proof}
For  $\ep>0$ we  set $u_\ep:= \max \{u, -\ep\}$. Then  $u_\ep \in \PSH^{-} (V)  \cap  L_{\loc}^\infty (V), u_\ep=u$
near  $\partial V$ and  $(u- u_\ep)\downarrow  u$ on  $V$  as  $\ep \downarrow  0$.  By Proposition \ref{partformular1}   we obtain
$$\begin{aligned}
\int_V (u-u_\ep) dd^c v\wedge T
&= \int_V vdd^c ( u-u_\ep)  \wedge T\\
& = \int_V vdd^c u \wedge T - \int_V  v dd^c  u_\ep  \wedge T \ge \int_V  v dd^c  u  \wedge T.
\end{aligned}$$
By letting $\ep \downarrow 0$ and using Lebesgue's monotone convergence theorem we obtain the desired conclusion.
\end{proof}
\noindent
The following variant of the above integration by part formula is surprisingly harder to show. In the case of domains in $\C^n$, this result is implicitly included in [CP]. It seems to us that the method in [CP] does not directly extend to the case at hand.
\begin{corollary}\label{partformular2}
Let $u, v, v_1, ..., v_{k-1}\in PSH^{-}(V)\cap L^\infty(V)$ be such that
$\lim\limits_{z \to \partial V} u(z)=\lim\limits_{z \to \partial V} v(z)= 0$ and that
$$\int_V dd^c u \wedge T<\infty, \int_V dd^c v\wedge T<\infty,$$
where $T:=dd^c v_1 \wedge \cdots \wedge dd^c v_{k-1}$.
Then we have
$$\int\limits_V (-u)dd^c v \wedge T=\int_V du\wedge d^c v\wedge T<\infty.$$
\end{corollary}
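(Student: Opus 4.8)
The plan is to reduce the identity to its diagonal special case and then prove that case by combining Bedford's quasi-smoothing with Stokes' theorem and Proposition~\ref{approximationma}.

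\textbf{Step 1 (reduction to the diagonal).} Since $u,v$ tend to $0$ at $\pa V$, Corollary~\ref{coropartformular1} gives $\int_V(-u)\,dd^cv\wedge T=\int_V(-v)\,dd^cu\wedge T$, and this common value is finite because $\int_V(-u)\,dd^cv\wedge T\le\|u\|_{L^\infty(V)}\int_Vdd^cv\wedge T<\infty$. On the other hand, for bounded \psh\ $u,v$ the current $du\wedge d^cv\wedge T$ is defined (compatibly with [BT2]) through the polarization
\[
2\,du\wedge d^cv\wedge T=d(u+v)\wedge d^c(u+v)\wedge T-du\wedge d^cu\wedge T-dv\wedge d^cv\wedge T,
\]
the three terms on the right being Bedford--Taylor positive currents which put no mass on $V_\sig$ by Lemma~\ref{outer}. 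Hence it suffices to prove, for every bounded $w\in\PSH^-(V)$ with $\lim_{z\to\pa V}w(z)=0$ and $\int_Vdd^cw\wedge T<\infty$, the diagonal identity
\[
\int_V dw\wedge d^cw\wedge T=\int_V(-w)\,dd^cw\wedge T .
\]
Indeed, once this is known for $w=u$, $w=v$ and $w=u+v$ (note $\int_Vdd^c(u+v)\wedge T\le\int_Vdd^cu\wedge T+\int_Vdd^cv\wedge T<\infty$), substituting into the polarization and expanding $dd^c(u+v)$ leaves $\int_Vdu\wedge d^cv\wedge T=\tfrac12\bigl(\int_V(-u)dd^cv\wedge T+\int_V(-v)dd^cu\wedge T\bigr)$, which equals $\int_V(-u)dd^cv\wedge T$ by the first line; finiteness of $\int_V(-w)dd^cw\wedge T\le\|w\|_{L^\infty(V)}\int_Vdd^cw\wedge T$ is part of the hypotheses.

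\textbf{Step 2 (the diagonal identity, model case).} Fix such a $w$ and let $w^\de$ and $v_i^\de$ be the Bedford quasi-smoothings associated with a common covering $\{U_j\}$ and partition of unity $\{\chi_j\}$; set $T^\de:=dd^cv_1^\de\wedge\cdots\wedge dd^cv_{k-1}^\de$, so that each $w^\de$ is smooth, compactly supported in $D$ and vanishes near $\pa V$, each $T^\de$ is smooth and $d$-closed, and $w^\de\downarrow w$, $v_i^\de\downarrow v_i$. Applying Stokes' theorem on $V$ (see p.~33 in [Ch]) to the compactly supported smooth form $w^\de\,d^cw^\de\wedge T^\de$ gives
\[
\int_V dw^\de\wedge d^cw^\de\wedge T^\de=\int_V(-w^\de)\,dd^cw^\de\wedge T^\de .
\]
When $w$ itself vanishes on a neighbourhood of $\pa V$, all the supports of the $w^\de$ lie in one fixed compact $K\Sub V$; since $-w^\de\to-w$ locally quasi-uniformly (by quasi-continuity of $w$ and Dini's lemma), Proposition~\ref{approximationma} applied with $f_\de=-w^\de$ makes both sides converge, yielding $\int_V dw\wedge d^cw\wedge T=\int_V(-w)\,dd^cw\wedge T$ in this case.

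\textbf{Step 3 (general boundary behaviour, and the main obstacle).} For general $w$ the supports of the quasi-smoothings $w^\de$ are no longer contained in a fixed compact, and a priori mass of the smoothed currents could escape to $\pa V$; controlling this is the crux of the proof. I would exhaust $V$ by compacts $K_m\uparrow V$, prove the identity on each $K_m$ as in Step~2, and estimate the remainder $\int_{V\setminus K_m}(-w^\de)\,dd^cw^\de\wedge T^\de$ using that $0\le -w^\de\le\sup_{V\setminus K_m}(-w)\to0$ as $m\to\infty$ together with a bound, uniform in $\de$, on the mass of $dd^cw^\de\wedge T^\de$ near $\pa V$; the latter is where the hypothesis $\int_Vdd^cw\wedge T<\infty$ and the energy estimates of Section~2 (in the spirit of Corollary~\ref{corBe}) enter. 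The same exhaustion controls the nonnegative term $\int_{V\setminus K_m}dw^\de\wedge d^cw^\de\wedge T^\de$. The principal difficulty is exactly this boundary control: on a complex variety one has neither an intrinsic convolution smoothing nor a bounded \psh\ exhaustion that vanishes near $\pa V$ (both available in the hyperconvex domains of [CP]), so everything must be routed through Bedford's \emph{non-\psh} quasi-smoothing and the quasi-continuity/ACC machinery of Section~2. A secondary obstacle is that the gradient term $du\wedge d^cv\wedge T$ is not reachable directly from Proposition~\ref{approximationma}; the polarization of Step~1, which trades it for the diagonal energies $(-w)\,dd^cw\wedge T$, is precisely what removes this second difficulty.
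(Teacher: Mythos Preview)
Your Step 3 is where the argument breaks down, and you essentially acknowledge this yourself: you \emph{would} bound the mass of $dd^c w^\de\wedge T^\de$ near $\pa V$ uniformly in $\de$, but no such bound is supplied, and it is not clear how to obtain one. The obstruction is structural: Bedford's quasi-smoothing $w^\de$ is \emph{not} plurisubharmonic, so $dd^c w^\de\wedge T^\de$ is a signed measure whose positive and negative parts may both blow up near $\pa V$ even though the limit $\int_V dd^c w\wedge T$ is finite. Neither Corollary~\ref{corBe} nor any of the other estimates in Section~2 says anything about these non-\psh\ approximants; they all take \psh\ inputs. There is a second, smaller gap already in Step~2: Proposition~\ref{approximationma} governs only currents of the form $f_j\,dd^c u_1^{\de_j}\wedge\cdots\wedge dd^c u_k^{\de_j}$, so it does not directly give the convergence of $\int_V dw^\de\wedge d^c w^\de\wedge T^\de$ to $\int_V dw\wedge d^c w\wedge T$.

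The paper sidesteps both obstacles by never leaving the \psh\ category. Instead of Bedford's smoothing it uses the \psh\ truncations $u_\ve:=\max\{u,-\ve\}$, which agree with $u$ on a neighbourhood of $\pa V$ because $u\to0$ there. Proposition~\ref{partformular1} then gives $\int_V dd^c u_\ve\wedge T=\int_V dd^c u\wedge T=:M$ for every $\ve$, and since $u_{\ve'}-u_\ve$ is compactly supported one computes $\int_V d(u_{\ve'}-u_\ve)\wedge d^c(u_{\ve'}-u_\ve)\wedge T<2\ve M$; letting $\ve'\downarrow0$ yields $\int_V du_\ve\wedge d^c u_\ve\wedge T\le 2\ve M\to0$. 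The off-diagonal identity for $(u,v)$ then follows in one stroke from
\[
\int_V(u_\ve-u)\,dd^c v\wedge T=\int_V du\wedge d^c v\wedge T-\int_V du_\ve\wedge d^c v\wedge T,
\]
the last term being killed by Cauchy--Schwarz and the left side by monotone convergence; no polarization or separate diagonal argument is needed. The moral is that when the only available global smoothing is non-\psh, one should reach for \psh\ truncations whenever the boundary behaviour allows it.
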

\begin{proof}
For each $\ve > 0$, we set $u_\ve:= \{u, -\ve\}$. Then
$u_\ve=u$ on a small \nhd\ of $\partial V.$
Using the equation (\ref{equal}) in Proposition \ref{partformular1} we obtain
\begin{equation} \label{uep}
\int_V dd^c u_\ve \wedge T=\int_V dd^c u \wedge T=M<\infty, \ \forall \ve>0.
\end{equation}
We now prove the following assertions:

\noindent
(i) $\sup\limits_{\ve >0}\int\limits_V du_\ve \wedge d^c u_\ve \wedge T<\infty.$

\noindent
(ii) $\lim\limits_{\ve \to 0}\int\limits_V du_\ve \wedge d^c u_\ve \wedge T=0.$

For (i), we fix $\ve>0$. Then for each $\ve'>0$ we have $u_{\ve'}=u_\ve=u$ on a small \nhd\ of $\partial V.$
More precisely
\begin{equation}
u_\ve=u \ \text{on the closed set}\ \{u \ge -\ve'\}\  \text{for}\ 0<\ve'\le \ve.
\end{equation}
Then we have
$$\begin{aligned}
\int_V d(u_{\ve'}-u_\ve) \wedge d^c (u_{\ve'}-u_\ve) \wedge T&=\int_V (u_\ve-u_{\ve'}) dd^c (u_{\ve'}-u_\ve) \wedge T \\
&=\int_V u_\ve dd^c u_{\ve'} \wedge T+\int_V u_{\ve'} dd^c u_\ve \wedge T-\int_V u_{\ve'} dd^c u_{\ve'} \wedge T-
\int_V u_\ve dd^c u_\ve \wedge T\\
&<2\ve M,
\end{aligned}$$
where the last inequality follows from (\ref{uep}).
It then follows from the weak$^*-$ convergence of positive measures $d(u_{\ve'}-u_\ve) \wedge d^c (u_{\ve'}-u_\ve) \wedge T$ towards
$du_\ve \wedge d^c u_\ve \wedge T$ as $\ve' \downarrow 0$ that
$$\int\limits_V du_\ve \wedge d^c u_\ve \wedge T\le \varliminf \limits_{\ve' \to 0} \int_V d(u_{\ve'}-u_\ve) \wedge
d^c (u_{\ve'}-u_\ve) \wedge T
\le 2\ve M.$$
Hence we obtain (i) and (ii). In particular (i) implies that
$\int\limits_V du \wedge d^c u \wedge T<\infty$ and similarly $\int\limits_V dv \wedge d^c v \wedge T<\infty.$
Thus, in view of Cauchy-Schwarz's inequality we also get
$$\big \vert \int_V du \wedge d^c v \wedge T \big \vert^2 \le \big (\int_V du \wedge d^c u \wedge T \big)
\big (\int_V dv \wedge d^c v \wedge T\big )<\infty.$$
Now we turn to the proof of the lemma. For $\ve>0$ we have
$$\begin{aligned}
\int_V (u_\ve-u)dd^c v \wedge T &=\int_V d(u-u_\ve)\wedge d^c v \wedge T\\
&=\int_V du \wedge d^c v\wedge T-\int_V du_\ve \wedge d^c v \wedge T.
\end{aligned}$$
Using (ii) and the Cauchy-Schwarz inequality again we obtain
$$\big \vert \int_V du_\ve \wedge d^c v \wedge T \big \vert^2 \le \big (\int_V du_\ve \wedge d^c u_\ve \wedge T \big)
\big (\int_V dv \wedge d^c v \wedge T\big ) \to 0 \ \text{as}\ \ve \to 0.$$
This implies that
$$\int_V (-u)dd^c v \wedge T=\lim_{\ve \to 0}\int_V (u_\ve-u)dd^c v \wedge T=\int_V du \wedge d^c v \wedge T.$$
Here the first equality follows from Lebesgue's monotone convergence theorem. The proof is thereby completed.
\end{proof}
Next, following Cegrell in [Ce1], we define the following important subclass of $PSH^{-}(V)$
$$\E_0 (V):=  \Big \{u \in \PSH^{-} (V) \cap L^\infty (V):
\lim\limits_{z \to \partial V} u(z)=0, \int\limits_V (dd^c u)^k<\infty\Big \}.$$
\noindent
It is not clear to us that $\E_0 (V)$ is always {\it non-empty.} Nevertheless, we are able to show, in the next section, that if $V$ is locally irreducible then $\E_0 (V) \cap \mathcal C(V) \ne \emptyset.$
We will use the preceding integration by part formulas to get an energy estimate and also H\"older type estimates for Monge-Amp\`ere measures in $\E_0 (V)$. In the case of domains in $\C^n,$ the result below are included in Lemma 5.2, Theorem 3.4 of [Pe] and Theorem 5.5 in [Ce2].
\begin{lemma} \label{energy}
Let $u_0,u_1,..., u_k \in \E_0 (V).$ Then the following assertions hold true:

\noindent
(a) $\int\limits_V (-u_0) dd^cu_1\wedge T \le \Big(\int\limits_V(-u_0) dd^c u_0\wedge T\Big)^{\frac12} \Big(\int\limits_V(-u_1)dd^c u_1\wedge T \Big)^{\frac12},$

\noindent
where $T:=dd^c u_2 \wedge \cdots \wedge dd^c u_k.$

\noindent
(b) $\int\limits_V (-u_0) dd^cu_1\wedge T \le \Big(\int\limits_V(-u_0) (dd^c u_0)^k \Big )^{\fr1{k+1}} \cdots \Big(\int\limits_V(-u_k) (dd^c u_k)^k \Big )^{\fr1{k+1}}.$

\noindent
(c) $\int\limits_V(-u_0) dd^cu_1\wedge T \le \Big (\int\limits_V (-u_0)(dd^c u_1)^k\Big)^{1/k} \cdots
\Big (\int\limits_V(-u_0)(dd^c u_k)^k\Big)^{1/k}.$
\end{lemma}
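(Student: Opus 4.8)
The plan is to establish (a) first, then bootstrap to (b) and (c) by induction on the number of "unbalanced" factors, exactly as in the classical Cegrell--Persson scheme, using the integration-by-parts formulas of Corollary \ref{coropartformular1} and Corollary \ref{partformular2} as the sole analytic input. For (a), I would first use Corollary \ref{partformular2} to rewrite both sides as Dirichlet-type energies: since $u_0,u_1\in\E_0(V)$ have zero boundary values and finite Monge-Amp\`ere mass (hence, by the argument inside the proof of Corollary \ref{partformular2}, finite gradient energy against $T$), we have
$$\int_V(-u_0)\,dd^cu_1\wedge T=\int_V du_0\wedge d^cu_1\wedge T,\qquad \int_V(-u_i)\,dd^cu_i\wedge T=\int_V du_i\wedge d^cu_i\wedge T,\quad i=0,1.$$
Then (a) is just the Cauchy--Schwarz inequality for the positive bilinear form $(f,g)\mapsto\int_V df\wedge d^cg\wedge T$ on the cone generated by $u_0,u_1$, which is precisely what is proved (as a byproduct) at the end of the proof of Corollary \ref{partformular2}. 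A mild technical point: Corollary \ref{partformular2} is stated for $v_1,\dots,v_{k-1}\in PSH^-(V)\cap L^\infty(V)$, and here $T=dd^cu_2\wedge\cdots\wedge dd^cu_k$ with $u_j\in\E_0(V)\subset PSH^-(V)\cap L^\infty(V)$, so the hypotheses are met; the finiteness $\int_V dd^cu_i\wedge T<\infty$ follows from Corollary \ref{corBe} (comparing $u_i$ with the bounded model $\psi$ if necessary, or directly from $\int_V(dd^cu_i)^k<\infty$ together with Lemma \ref{energy}(a) applied to lower-order products, i.e. a nested induction on $k$).

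For (b), I would iterate (a). Writing $I(w_0;w_1,\dots,w_k):=\int_V(-w_0)\,dd^cw_1\wedge\cdots\wedge dd^cw_k$, inequality (a) reads $I(u_0;u_1,u_2,\dots,u_k)\le I(u_0;u_0,u_2,\dots,u_k)^{1/2}I(u_1;u_1,u_2,\dots,u_k)^{1/2}$. Applying (a) repeatedly to move each index into a "pure" position, one gets after $k+1$ steps that $I(u_0;u_1,\dots,u_k)$ is bounded by a product of terms of the form $I(u_j;u_j,\dots,u_j)^{c_j}$ with exponents summing to $1$; a symmetrization/averaging over the order in which the indices are balanced (the standard trick: average the resulting inequalities over all the ways of peeling off indices, or equivalently use the multilinearity to interpolate) forces the exponents to be $\tfrac1{k+1}$ each, giving (b). I would present this as: apply (a) to the leftmost pair, recurse on each of the two resulting energies, and observe that the full binary recursion tree of depth $\le k$ terminates in pure energies $I(u_j;u_j,\dots,u_j)=\int_V(-u_j)(dd^cu_j)^k$; then a counting argument (or direct induction on $k$ with the inductive hypothesis applied to the $(k-1)$-fold products) yields the symmetric exponents.

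For (c), the mechanism is different: here one keeps $u_0$ fixed in the first slot and instead uses Corollary \ref{coropartformular1} together with a Cauchy--Schwarz step adapted to the measure $(-u_0)\,dd^c(\cdot)\wedge dd^c(\cdot)\wedge(\text{rest})$. The idea is to show $I(u_0;u_1,u_2,\dots,u_k)\le I(u_0;u_1',u_2,\dots,u_k)^{1/2}I(u_0;u_1'',u_2,\dots,u_k)^{1/2}$ type estimates that let one replace a factor $dd^cu_i$ by $dd^cu_1$ at the cost of splitting, then iterate so that the $i$-th factor gets concentrated: after balancing, the product $dd^cu_1\wedge\cdots\wedge dd^cu_k$ is replaced, term by term, by $(dd^cu_i)^k$ with exponent $1/k$, for $i=1,\dots,k$. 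Here the relevant Cauchy--Schwarz is $\int_V(-u_0)\,dd^ca\wedge dd^cb\wedge S\le\big(\int_V(-u_0)(dd^ca)^2\wedge S\big)^{1/2}\big(\int_V(-u_0)(dd^cb)^2\wedge S\big)^{1/2}$, which itself follows from the positivity of $(f,g)\mapsto\int_V(-u_0)\,df\wedge d^cg\wedge S$ after an integration by parts (Corollary \ref{coropartformular1} or a localized version of Corollary \ref{partformular2}, valid since $-u_0\ge 0$ is bounded with zero boundary values). One then pushes all indices to "pure" $(dd^cu_i)^k$ form and reads off exponent $1/k$ on each by the same symmetrization as in (b).

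The main obstacle I anticipate is not the algebra of the inductions but the \emph{finiteness and integration-by-parts bookkeeping on $V$}: one must verify at each stage that the intermediate quantities $\int_V(-u_0)\,dd^cw_1\wedge\cdots\wedge dd^cw_k$ and the associated gradient energies $\int_V df\wedge d^cg\wedge T$ are finite and that the requisite integration-by-parts identity (Corollary \ref{coropartformular1} or \ref{partformular2}) applies --- the latter corollary demands \emph{a priori} finiteness of the Monge-Amp\`ere masses of the outer factors, so the whole lemma must be proved by a simultaneous induction on $k$ in which part (a) for $k-1$ feeds the finiteness hypotheses needed to invoke Corollary \ref{partformular2} at level $k$. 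Because upper regularization and convolution smoothing are unavailable on a non-locally-irreducible $V$, every limiting step (e.g. $u_\ve=\max\{u,-\ve\}\downarrow u$, weak\textsuperscript{*} convergence of the wedge currents) must be justified through the quasi-smoothing/quasi-continuity machinery already set up in Proposition \ref{approximationma} and Lemma \ref{ACC}; I would cite those rather than redo them, but I expect the referee-sensitive part of the write-up to be exactly this verification that the domain-case manipulations survive the passage to $V$.
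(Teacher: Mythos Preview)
Your plan for (a) is exactly the paper's proof: rewrite both sides via Corollary \ref{partformular2} as $\int_V du_i\wedge d^cu_j\wedge T$ and apply Cauchy--Schwarz.

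For (b) and (c) the paper takes a shortcut you do not mention: rather than carrying out the iteration and symmetrization by hand, it invokes Theorem 4.1 of Persson [Pe], an abstract lemma stating that any nonnegative function $F(u_0,\dots,u_k)$ which is symmetric and satisfies the pairwise Cauchy--Schwarz inequality $F(u_0,u_1,\dots)\le F(u_0,u_0,\dots)^{1/2}F(u_1,u_1,\dots)^{1/2}$ automatically satisfies $F\le\prod_j F(u_j,\dots,u_j)^{1/(k+1)}$. For (b) the paper applies this directly to $F(u_0,\dots,u_k)=\int_V(-u_0)dd^cu_1\wedge\cdots\wedge dd^cu_k$ (symmetry in \emph{all} $k+1$ slots coming from Corollary \ref{coropartformular1}, the pairwise Cauchy--Schwarz being exactly (a)); for (c) it applies the same abstract lemma with $u_0$ held fixed as a parameter. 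Your hand iteration is precisely what Persson's lemma packages, so the approaches are equivalent; citing [Pe] spares you the exponent bookkeeping.

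One point in your plan for (c) needs correction. The inequality $\int_V(-u_0)dd^ca\wedge dd^cb\wedge S\le\big(\int_V(-u_0)(dd^ca)^2\wedge S\big)^{1/2}\big(\int_V(-u_0)(dd^cb)^2\wedge S\big)^{1/2}$ does \emph{not} come from positivity of the weighted form $(f,g)\mapsto\int_V(-u_0)df\wedge d^cg\wedge S$; that form is not what integration by parts produces here, since $(-u_0)S$ is not closed. The paper's mechanism (and the one that actually works) is: swap $u_0$ out of the front slot via Corollary \ref{coropartformular1}, i.e.\ $\int_V(-u_0)dd^ca\wedge dd^cb\wedge S=\int_V(-a)dd^cb\wedge dd^cu_0\wedge S$, then apply (a) to the pair $(a,b)$ with $dd^cu_0\wedge S$ playing the role of $T$, and swap back. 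This yields the desired inequality with $u_0$ fixed, and is the first displayed step of the paper's proof of (c).

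Your remark about the finiteness hypotheses of Corollary \ref{partformular2} is well taken: one needs $\int_V dd^cu_i\wedge T<\infty$ before invoking it, and the paper is silent on this. The clean way to supply it is to prove first (by the elementary comparison argument, as in Lemma \ref{finite}(a), which uses only Theorem \ref{thmBe}) that $\E_0(V)$ is closed under sums; expanding $(dd^c(u_0+\cdots+u_k))^k$ then gives finiteness of all the mixed masses, and no nested induction on $k$ is needed.
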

\begin{proof}
(a) We proceed as in Lemma 5.2 in [Pe].
Using the integration by part formula
(cf. Corollary \ref{partformular2}) and Cauchy-Schwarz's inequality we obtain
$$\begin{aligned}
\int_V(-u_0)  dd^cu_1\wedge T &=  \int_V du_0\wedge d^cu_1\wedge T\\
&\le \Big(  \int_V du_0\wedge d^cu_0\wedge T\Big)^{\frac12} \Big(\int_V du_1\wedge d^cu_1\wedge T\Big)^{\frac12}\\
&= \Big(\int_V(-u_0) dd^c u_0\wedge T\Big)^{\frac12} \Big(\int_V(-u_1) dd^cu_1\wedge T\Big)^{\frac12}.
\end{aligned}$$
This is the desired estimate.

\noindent
(b) follows directly by applying (a) and Theorem 4.1 in [Pe] to the auxiliary function
$$F(u_0, u_1,\cdots u_k):=\int_V (-u_0) dd^c u_1 \wedge \cdots \wedge dd^c u_k.$$
\noindent
(c) Set $T':=dd^c u_3 \wedge \cdots \wedge dd^c u_k.$
Then using the integration by part formula
(cf. Corollary \ref{coropartformular1}) and (a) we obtain
$$\begin{aligned}
\int_V (-u_0)dd^cu_1\wedge dd^cu_2 \wedge T' &=  \int_V (-u_1)dd^c u_2\wedge dd^cu_0\wedge T'\\
&\le \Big(\int_V(-u_1) dd^c u_1 \wedge dd^cu_0 \wedge dd^c u_2\wedge T'\Big)^{\frac12}
\Big(\int_V(-u_2) dd^cu_2\wedge dd^c u_0 \wedge T'\Big)^{\frac12}\\
&= \Big(\int_V(-u_0) (dd^c u_1)^2 \wedge dd^c u_2\wedge T'\Big)^{\frac12}
\Big(\int_V(-u_0) (dd^cu_2)^2\wedge T'\Big)^{\frac12}.
\end{aligned}$$
Thus, so far we have followed the first step in the proof of Lemma 5.4 in [Ce2]. Nevertheless, to avoid Cegrell's somewhat complicated induction arguments (in the rest of Lemma 5.4 and in Theorem 5.5 of [Ce2]) we
invoke again Theorem 4.1 in [Pe], where the function $F$ is the same as in (b), but this time, the variable $u_0$
is regarded as a {\it fixed} parameter.
\end{proof}
\noindent
The following lemma about gluing \psh\ functions on $V$ is an easy consequence of the above mention Fornaess-Narasimhan's criterion for membership in $\PSH(V)$. To the best of our knowledge, no proof directly from the definition is known.
\begin{lemma} \label{gluing}
Let $U \subset V$ be an open subset and $u \in \PSH(V), v \in \PSH(U)$.
Assume that $\varlimsup\limits_{\xi \to z} v(\xi) \le u(z) \  \forall z \in \partial U.$ Then the function
$$w:=\begin{cases}
\max \{u,v\}  &  \text{on}\  U\\
u &  \text{on}\  V \setminus U.
\end{cases}$$
belongs to $PSH(V).$
\end{lemma}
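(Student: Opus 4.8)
The plan is to verify the hypotheses of the Fornaess–Narasimhan criterion for $w$, namely that $w$ is upper semicontinuous on $V$, not identically $-\infty$ on any irreducible component, and that $w \circ \theta$ is subharmonic for every holomorphic $\theta : \Delta \to V$. The non-triviality on components is immediate since $w \ge u$ and $u$ is already \psh, so I will focus on the other two points.

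First I would check upper semicontinuity. On the open set $U$ the function $w = \max\{u,v\}$ is a maximum of two u.s.c. functions, hence u.s.c. there. At an interior point of $V \setminus U$ the function $w = u$ locally, so there is nothing to check. The only delicate points are $z \in \partial U$. There $w(z) = u(z)$, and for $\xi$ near $z$ we have $w(\xi) \le \max\{u(\xi), v(\xi)\}$ when $\xi \in U$ and $w(\xi) = u(\xi)$ otherwise; using $\varlimsup_{\xi \to z} v(\xi) \le u(z)$ together with the u.s.c. of $u$ gives $\varlimsup_{\xi \to z} w(\xi) \le u(z) = w(z)$. So $w$ is u.s.c. on all of $V$.

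Next, the subharmonicity along disks. Fix a holomorphic $\theta : \Delta \to V$ and set $\Delta_U := \theta^{-1}(U)$, an open subset of $\Delta$. On $\Delta_U$ we have $w \circ \theta = \max\{u \circ \theta, v \circ \theta\}$, and by Fornaess–Narasimhan applied to $u$ on $V$ and to $v$ on $U$, both $u \circ \theta$ and $v \circ \theta$ are subharmonic on $\Delta_U$, hence so is their maximum. On $\Delta \setminus \overline{\Delta_U}$ we have $w \circ \theta = u \circ \theta$, subharmonic. The boundary estimate $\varlimsup_{\xi \to z} v(\xi) \le u(z)$ for $z \in \partial U$ translates, for $t \in \partial \Delta_U \cap \Delta$, into $\varlimsup_{s \to t} (w\circ\theta)(s) \le (u\circ\theta)(t) = (w\circ\theta)(t)$. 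Thus $w \circ \theta$ is an u.s.c. function on $\Delta$ which agrees with the subharmonic function $u \circ \theta$ off the open set $\Delta_U$, coincides with a subharmonic function on $\Delta_U$, and is $\le u \circ \theta$ on $\partial \Delta_U$; a standard gluing lemma for subharmonic functions (e.g. the sub-mean-value inequality is checked at each point by splitting into the three cases, using that a subharmonic majorant bounds $w\circ\theta$ near $\partial\Delta_U$) then shows $w \circ \theta$ is subharmonic on all of $\Delta$. Invoking Fornaess–Narasimhan once more, $w \in \PSH(V)$.

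The main obstacle is the gluing of subharmonic functions across $\partial \Delta_U$: one must be careful that $\Delta_U$ need not have nice boundary and that $w\circ\theta$ is only controlled by a $\varlimsup$ inequality there. The clean way around this is to note that the classical one-variable gluing statement — if $\Omega' \subset \Omega$ is open, $f$ is subharmonic on $\Omega$, $g$ is subharmonic on $\Omega'$, and $\varlimsup_{\zeta \to \xi} g(\zeta) \le f(\xi)$ for all $\xi \in \partial\Omega' \cap \Omega$, then $\max\{f,g\}$ on $\Omega'$ extended by $f$ on $\Omega\setminus\Omega'$ is subharmonic on $\Omega$ — is exactly the disk-case of the lemma being proved and is completely standard. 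I would simply cite it, or alternatively bypass the subharmonic gluing entirely by the following trick: since the statement is local on $V$, it suffices to prove $w \in \PSH$ near each point $z_0 \in \partial U$; pick a local \psh\ extension $\tilde u$ of $u$ on an ambient open set $\tilde W \subset D$ and apply the known $\C^n$-gluing lemma for \psh\ functions to $\tilde u$ and (a \psh\ function extending) $v$ — but the cleanest exposition stays on $V$ and uses the disk criterion as above.
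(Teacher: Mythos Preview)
Your proposal is correct and follows exactly the route the paper indicates: the paper gives no detailed proof of this lemma, merely remarking that it ``is an easy consequence of the above mentioned Fornaess--Narasimhan's criterion for membership in $\PSH(V)$'' and that ``no proof directly from the definition is known.'' Your argument supplies precisely those details---upper semicontinuity at points of $\partial U$, and reduction of the disk criterion to the classical one-variable gluing lemma for subharmonic functions on $\theta^{-1}(U)\subset\Delta$---so there is nothing to add.
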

\noindent
This lemma will facilitate some "spherical" modifications (analogous to the Poisson modifications with respect to the classical Dirichlet problem) in the next sections. We should say that this fact has been used implicitly in Theorem 1.8 of [Wik] and also in Lemma 5.1 and Theorem 5.3 of [Be].

Another technical tool comes from the following fact about upper-regularizations of subsets of $\PSH^{-}(V).$
\begin{proposition} \label{regular}
Let $\mathcal A \subset \PSH^{-} (V).$ Define
$$u(z):=\sup\{v(z): v \in \mathcal A\}, z \in V.$$
Assume that $V$ is locally irreducible at $x \in V.$ Then $u^*$ is \psh\ on some \nhd\ $U$ of $x.$
\end{proposition}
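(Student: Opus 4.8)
The plan is to reduce the statement to the classical fact that the upper‑regularization of a locally bounded‑above family of \psh\ functions on a \emph{complex manifold} is \psh, and then transport this through a suitable local description of $V$ near $x$. Since $V$ is locally irreducible at $x$, by [Ch] there is a fundamental system of \nhd s $U$ of $x$ such that $U\cap V$ is irreducible in $U$; fix one such $U$ which moreover is contained in one of the chart sets $\tilde U_j$ admitting a \psh\ extension structure. On an irreducible variety the regular locus $(U\cap V)_{\reg}$ is connected, and $(U\cap V)_{\sig}$ is a proper analytic subset, hence (by Lemma \ref{outer}, or simply because it is pluripolar and closed) of measure zero and negligible for \psh\ functions.

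First I would work on the connected complex manifold $W:=(U\cap V)_{\reg}$. Each $v\in\mathcal A$ restricts to a negative \psh\ function on $W$ in the usual manifold sense, so the pointwise supremum $u|_W$ has upper‑regularization $(u|_W)^*$ which is \psh\ on $W$ by the classical Brelot–Cartan lemma. Next I would show $(u|_W)^*$ extends to a \psh\ function $\tilde u$ on all of $U\cap V$: it is \psh\ on $W$, bounded above by $0$ and locally bounded below near points of $(U\cap V)_{\sig}$ in the regions where $u$ itself is (one uses that some fixed $v_0\in\mathcal A$, or the function $\psi=\|z\|^2/M-1$ if $\mathcal A$ is suitably enlarged, gives a lower bound), and a proper analytic subset is removable for \psh\ functions that are locally bounded above — this is exactly the content underlying Bedford's extension ``by zero'' philosophy and the quasi‑continuity machinery recalled after Lemma \ref{outer}. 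Concretely, $\tilde u:=\big(\limsup_{W\ni\xi\to z}(u|_W)^*(\xi)\big)$ is upper semicontinuous on $U\cap V$, agrees with $(u|_W)^*$ on $W$, and is \psh\ on $U\cap V$ by the removable‑singularity theorem for \psh\ functions across the pluripolar closed set $(U\cap V)_{\sig}$ (alternatively, invoke Fornaess–Narasimhan: every holomorphic disc $\theta:\Delta\to U\cap V$ either lands in $(U\cap V)_{\sig}$, a case excluded by the non‑degeneracy hypothesis on \psh\ functions, or meets $W$ in a dense open set, so $\tilde u\circ\theta$ is subharmonic by continuity of the subharmonic class under the relevant limit).

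Finally I would identify $\tilde u$ with $u^*$ near $x$. Since each $v\in\mathcal A$ vanishes on $(U\cap V)_{\sig}$ up to a set of measure zero and $\tilde u\ge u$ on $W$, one gets $\tilde u\ge u$ on $U\cap V$ off a negligible set, hence $\tilde u\ge u^*$ there; conversely $u^*\ge u\ge \tilde u$ on $W$ by construction and both sides are u.s.c., so $u^*=\tilde u$ on a \nhd\ of $x$, and in particular $u^*$ is \psh\ there. The main obstacle is the second step: proving that the \psh\ function $(u|_W)^*$, defined only on the regular locus, really does extend \psh ly across the singular part. The subtlety is that a priori $(u|_W)^*$ need not coincide with $u$ on $W$ (the sup is taken over all of $V$, not over $W$), so one must be careful that enlarging $\mathcal A$ by restriction to $U\cap V$ and by adding the barrier $\psi$ does not change the regularization near $x$; here local irreducibility is used in an essential way, because on a reducible germ the two branches can force $u^*$ to jump at singular points and the extension fails, exactly as in the Whitney‑umbrella example $V^1$ of the introduction.
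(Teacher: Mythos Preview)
Your overall strategy matches the paper's: the paper does not give a direct proof but refers to Theorem 1.5 in [Wik2], which in turn rests on Demailly's removable-singularity theorem (Theorem 1.7 in [Dem]) for \psh\ functions defined on $V\setminus V'$ that are locally bounded \emph{above} near a nowhere dense closed subvariety $V'$. Your plan---regularize on the connected manifold $W=(U\cap V)_{\reg}$, extend across $(U\cap V)_{\sig}$, then identify the extension with $u^*$---is exactly this.

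However, your third step is not correctly argued and contains several wrong statements. First, the removable-singularity theorem needs an \emph{upper} bound near the singular locus (which you have for free since $\mathcal A\subset\PSH^{-}(V)$), not the lower bound you invoke. Second, the sentence ``each $v\in\mathcal A$ vanishes on $(U\cap V)_{\sig}$ up to a set of measure zero'' is meaningless; elements of $\mathcal A$ are arbitrary negative \psh\ functions and have no prescribed behaviour on the singular set. Third, the chain ``$u^*\ge u\ge\tilde u$ on $W$'' is false: by construction $\tilde u=(u|_W)^*\ge u$ on $W$, not the reverse. What is true, and immediate, is that $u^*|_W=(u|_W)^*=\tilde u|_W$ because $W$ is open in $V$.

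The identification at singular points goes as follows. For $z\in(U\cap V)_{\sig}$ one has $\tilde u(z)=\limsup_{W\ni\xi\to z}u^*(\xi)\le (u^*)^*(z)=u^*(z)$ since $u^*$ is already u.s.c. For the reverse inequality, fix $v\in\mathcal A$. On $W$ we have $v\le u\le\tilde u$; since $v$ is \psh\ on the irreducible germ $U\cap V$, Demailly's theorem applied to $v|_W$ shows that $v$ coincides with its own extension, i.e.\ $v(z)=\limsup_{W\ni\xi\to z}v(\xi)\le\limsup_{W\ni\xi\to z}\tilde u(\xi)=\tilde u(z)$. Taking the supremum over $v\in\mathcal A$ gives $u\le\tilde u$ on $U\cap V$, and since $\tilde u$ is u.s.c.\ this yields $u^*\le\tilde u$. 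Hence $u^*=\tilde u$ is \psh\ on $U\cap V$. This is the step where local irreducibility is genuinely used (through Demailly's extension theorem), and it replaces your heuristic appeal to ``negligible sets''.
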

The above result is basically Theorem 1.5 in [Wik2] which is an easy consequence of an extension result (Theorem 1.7 in [Dem]) for \psh\ functions defined outside a nowhere dense closed complex subvariety $V'$ of $V$ which are locally upper bounded near points of $V'.$
It should be noticed that the assumption of local irreducibility of $V$ given in Theorem 1.5 of [Wik2] should be understood as locally irreducible at {\it every} point of $V.$
Using Proposition \ref{regular}, we may generalize the above result of Demailly as follows
\begin{proposition} \label{remove}
Let $X$ be a closed pluripolar subset of $V$ and $u \in \PSH(V \setminus X).$ Assume that $V$ is locally irreducible
and $u$ is locally bounded from above near every point of $X.$ Then there exists $\tilde u \in PSH(V)$ such that $\tilde u|_{V \setminus X}=u.$
\end{proposition}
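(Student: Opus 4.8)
The plan is to realize the extension, locally near each point of $X$, as the upper regularization of a family of \emph{global} negative \psh\ functions, and then to feed this family into Proposition \ref{regular}. Since plurisubharmonicity is a local property, and since two \psh\ functions on a connected open set that agree off a pluripolar set must coincide (a \psh\ function is recovered from its values off any pluripolar set --- see the last paragraph), it suffices to produce, for each $a\in X$, a \nhd\ $\Om_a$ of $a$ and a function $\tilde u_a\in\PSH(\Om_a)$ with $\tilde u_a=u$ on $\Om_a\setminus X$. These local extensions automatically agree with one another and with $u$ on overlaps, hence glue to the desired $\tilde u\in\PSH(V)$. Fix $a\in X$.

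By Theorem 5.3 of [Be] there is $\psi_0\in\PSH(V)$ with $\psi_0|_X\equiv-\infty$. Using that $u$ is bounded above near $a$ and that $\psi_0$ is bounded above on a small relatively compact \nhd\ of $a$, fix such a \nhd\ $\Om_a$, a constant $c$ with $u\le c$ on $\Om_a\setminus X$, and set $\psi:=\psi_0-\sup_{\Om_a}\psi_0\in\PSH(\Om_a)$, so $\psi\le0$ and $\psi|_{X\cap\Om_a}\equiv-\infty$. For $j\ge1$ put
$$g_j:=(u-c)+\tfrac1j\psi\ \text{ on }\Om_a\setminus X,\qquad g_j:=-\infty\ \text{ on }X\cap\Om_a.$$
I claim each $g_j$ lies in $\PSH^-(\Om_a)$. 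It is $\le0$ and \psh\ on $\Om_a\setminus X$; it is upper semicontinuous on $\Om_a$ because at $z\in X$ one has $\varlimsup_{\xi\to z}(u(\xi)-c)\le0$ while $\varlimsup_{\xi\to z}\psi(\xi)=-\infty$, so $\varlimsup_{\xi\to z}g_j(\xi)=-\infty=g_j(z)$ --- this is exactly where the hypothesis that $u$ is locally bounded above near $X$ is used. Finally $g_j$ is \psh\ on all of $\Om_a$ by the Fornaess--Narasimhan criterion: for holomorphic $\theta:\De\to\Om_a$, the function $g_j\circ\theta$ is u.s.c. on $\De$, subharmonic on $\De\setminus\theta^{-1}(X)$, and equal to $-\infty$ on the closed set $\theta^{-1}(X)$; if $\psi\circ\theta\equiv-\infty$ then $g_j\circ\theta\equiv-\infty$, and otherwise $\theta^{-1}(X)\subset\{\psi\circ\theta=-\infty\}$ is a closed polar subset of $\De$ across which the (locally bounded above) subharmonic function $g_j\circ\theta$ extends subharmonically, the extension taking the value $-\infty$ on $\theta^{-1}(X)$ because $\psi(\theta(\zeta))=-\infty$ there. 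In either case $g_j\circ\theta$ is subharmonic, and $g_j\not\equiv-\infty$ on components, so $g_j\in\PSH^-(\Om_a)$.

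Now set $\tilde u_a:=\big(\sup_{j\ge1}g_j\big)^*+c$ on $\Om_a$. Since $\{g_j\}\subset\PSH^-(\Om_a)$ and $V$ is locally irreducible at $a$, Proposition \ref{regular} shows $\big(\sup_{j\ge1}g_j\big)^*$, hence $\tilde u_a$, is \psh\ on some \nhd\ $U_a\subset\Om_a$ of $a$. It remains to check $\tilde u_a=u$ on $U_a\setminus X$. On the dense set $(\Om_a\setminus X)\setminus\{\psi=-\infty\}$ we have $\sup_jg_j=(u-c)+\sup_j\tfrac1j\psi=u-c$ (using $\psi\le0$), hence $\tilde u_a\ge\sup_jg_j+c=u$ there; by plurisubharmonicity of $\tilde u_a$ (so $\tilde u_a=\tilde u_a^{*}$) together with the recovery of the \psh\ function $u$ from its values off the pluripolar set $\{\psi=-\infty\}$, this forces $\tilde u_a\ge u$ on all of $\Om_a\setminus X$. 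Conversely $\sup_jg_j\le u-c$ on the open set $\Om_a\setminus X$, so $\big(\sup_jg_j\big)^*\le u-c$ there, i.e. $\tilde u_a\le u$. Thus $\tilde u_a=u$ on $U_a\setminus X$, which finishes the construction and hence the proof.

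The delicate point, and the one I expect to need the most care in writing rigorously, is the claim that the trivially extended function $g_j$ genuinely belongs to $\PSH^-(\Om_a)$: this simultaneously exploits the local upper bound on $u$ (to get upper semicontinuity across $X$) and the pluripolarity of $X$ (so that, after composing with a disc, $X$ pulls back to a closed polar set across which subharmonicity is removable). The remaining ingredients --- the identification of $\big(\sup_jg_j\big)^*$ off $X$, and the gluing of the local pieces --- rely only on the principle that a \psh\ function equals its upper limit taken over the complement of any pluripolar set; on $V_\reg$ this is immediate from the sub-mean-value inequality and the Lebesgue-negligibility of pluripolar sets, and across $V_\sig$ it follows from Bedford's quasi-smoothing $u^\ve\downarrow u$ together with the density of $V_\reg$ in $V$.
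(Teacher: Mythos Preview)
Your argument is correct and follows essentially the same route as the paper: perturb $u$ by $\tfrac1j\psi$ with $\psi\in\PSH$, $\psi|_X\equiv-\infty$, extend by $-\infty$ across $X$, verify membership in $\PSH^-$ via the Fornaess--Narasimhan criterion, and then apply Proposition~\ref{regular} to the upper regularization of the supremum. You supply more detail than the paper does --- in particular the careful treatment of the set $\{\psi=-\infty\}\setminus X$ when identifying $\tilde u_a$ with $u$, and the explicit gluing step --- but the underlying strategy is the same.
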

\begin{proof} It suffices to prove that $u$ extends to a \psh\ function through each point of $X.$
Fix $z_0 \in X.$ Then there exist an open \nhd\ $U_{z_0} \subset V$ of $z_0$ and $\va \in PSH(U)$ such that
$\va|_{X \cap U} \equiv -\infty.$ By subtracting large constant, we may assume
$\va<0$ on $U.$ For $\ve>0,$ we set
$u_\ve: =u+\ve \va$ on $U \setminus X$ and $u_\ve:=-\infty$ on $U \cap X.$ Using Fornaess-Narasimhan's criterion we see that $u_\ve \in PSH^-(U).$ Thus, by Proposition \ref{regular}, we infer that
$$u_{z_0} (z):=(\sup \{u_\ve(z): \ve>0\})^*\in PSH^{-}(U),\forall z \in U.$$
It is also clear that $u_{z_0}=u$ on $U\setminus X.$ The proof is thereby completed.
\end{proof}
\noindent
We end up this section by sorting out the following extension result for \psh\ functions on $V.$
\begin{proposition}\label{extension}
Assume that $V$ is a Stein variety. Then for every $u \in PSH(V),$ there exists an open Stein \nhd\ $U$ of $V$ (that may depend on $u$) and $\tilde u \in PSH(U)$ such that ${\tilde u}|_V=u.$
\end{proposition}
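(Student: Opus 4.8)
The plan is to let Siu's theorem on Stein neighborhoods carry all the burden concerning the word ``Stein'', and then to reduce the assertion to a patching problem for locally defined \psh\ functions.

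First I would fix, using the classical fact that a Stein subvariety of a complex space admits a fundamental system of open Stein \nhd s (Siu), an open Stein \nhd\ $W$ of $V$ with $W\subset D$. The key remark is that it then suffices to find \emph{any} open \nhd\ $U_0$ of $V$ with $U_0\subset D$, together with some $\tilde u\in PSH(U_0)$ satisfying $\tilde u|_V=u$: since $V$ is a closed Stein subvariety of $U_0$ as well, a second application of Siu's theorem inside $U_0$ produces an open Stein $U$ with $V\subset U\subset U_0$, and $\tilde u|_U$ is the desired function. Note that here $U_0$ is only required to contain $V$ and need not be relatively compact, which is precisely what lets the argument run even though $V$ itself may be noncompact.

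Next I would extract the local data. By the very definition of $PSH(V)$ there is an open cover $\{\Omega_j\}_{j\ge 1}$ of $V$, locally finite in $W$, with each $\overline{\Omega_j}$ a compact subset of $W$, and functions $v_j\in PSH(\Omega_j)$ with $v_j=u$ on $\Omega_j\cap V$; after a slight shrinking we may assume every $v_j$ is bounded above on $\Omega_j$. Choosing nested shrinkings $\Omega_j'',\Omega_j'$ with $\overline{\Omega_j''}\subset\Omega_j'$ and $\overline{\Omega_j'}\subset\Omega_j$ so that $\{\Omega_j''\}$ still covers $V$, and setting $N:=\bigcup_j\Omega_j''$, the task becomes to glue the $v_j$, restricted to suitable subdomains, into a single \psh\ function on some open \nhd\ $U_0\subset N$ of $V$. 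Because all the $v_j$ coincide on $V$, any function produced by a maximum-type gluing automatically restricts to $u$ on $V$.

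This gluing is where the Stein hypothesis is genuinely used, and it is the step I expect to be the main obstacle. A partition-of-unity combination $\sum_j\chi_j v_j$ is ruled out (it destroys plurisubharmonicity), and a naive global maximum is not available since each $v_j$ lives only on $\Omega_j$; moreover, because \psh\ functions are merely upper semicontinuous and may equal $-\infty$ on pluripolar sets, there is no pointwise control of the differences $v_i-v_j$ off $V$, so a crude $\max_j v_j$ will in general fail to be \psh\ along the real hypersurfaces $\partial\Omega_j'$ at points lying off $V$. To repair this I would exploit that $W$ is Stein: the ideal sheaf of $V$ in $W$ is globally generated, and the standard construction (summing the squared moduli of generators with sufficiently small coefficients) yields a smooth $h\in PSH(W)$ with $h\ge 0$ and $\{h=0\}=V$. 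Since $h$ vanishes on $V$, replacing $v_j$ by $v_j+c_j h$ affects neither its restriction to $V$ nor its plurisubharmonicity, while $h$ is bounded below by a positive constant on each compact part of $\overline{\Omega_j'}$ keeping a fixed distance from $V$; choosing the constants $c_j$ appropriately (growing along the locally finite cover) and forming a locally finite maximum of the modified pieces $v_j+c_j h$, one arranges that near every point of a \nhd\ $U_0$ of $V$ this maximum is locally a finite maximum of genuine \psh\ functions, hence \psh, and it equals $u$ on $V$ because there $h\equiv 0$ and $v_j\equiv u$; the remaining delicate point, namely the small pluripolar set where several of the modified pieces are $-\infty$, is handled by the removable-singularity property of bounded-above \psh\ functions. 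Alternatively, this entire patching step can be bypassed by invoking the classical theorem that a \psh\ function on a closed complex subvariety of a Stein manifold extends plurisubharmonically to a \nhd\ of that subvariety, which belongs to the circle of ideas of [FN].
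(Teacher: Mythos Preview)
Your route is genuinely different from the paper's. The paper makes no attempt to patch local \psh\ extensions; instead it uses the Hartogs-domain trick from [FN]. After shrinking $D$ to be Stein via Siu's theorem, one forms the Hartogs variety $\tilde V=\{(z,w)\in V\times\C:\log|w|+u(z)<0\}$, which is Stein because $u\in\PSH(V)$; a second application of Siu yields a Stein open $D'\supset\tilde V$ in $D\times\C$, and intersecting $D'$ with all its images under $(z,w)\mapsto(z,w/t)$, $|t|\ge1$, produces a Stein Hartogs domain $\{(z,w):\log|w|+\tilde u(z)<0\}$ over some open $U\supset V$. Pseudoconvexity forces $\tilde u\in\PSH(U)$, and comparing fibres over $V$ gives $\tilde u|_V=u$. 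This converts the extension problem into pure geometry (two invocations of Siu plus the elementary fact that the minus-log-radius of a pseudoconvex Hartogs domain is \psh) and completely sidesteps any gluing.

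Your max-with-weights sketch, by contrast, has a real gap at exactly the place you flag. At a point $z_0\in\partial\Omega_j'$ lying off $V$ you need some piece $v_i+c_ih$, defined across that boundary, to dominate $v_j+c_jh$ near $z_0$; but the $-\infty$ locus of $v_i$ is a pluripolar set that can perfectly well meet $\partial\Omega_j'\setminus V$, and at such points no finite $c_i$ enforces the required inequality. Crossing $\partial\Omega_j'$ there, your locally finite maximum drops from the finite value $v_j(z_0)+c_jh(z_0)$ to something governed by $v_i+c_ih$ near $-\infty$, and the sub-mean-value inequality fails on small disks through $z_0$. The removable-singularity theorem does not rescue this, since the defect occurs at finite-valued points of the maximum, not on its own $-\infty$ set. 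Richberg-type gluing along these lines can indeed be carried out when the local data $v_j$ are continuous, but for merely upper-semicontinuous \psh\ functions it is genuinely delicate---which is precisely why the Hartogs trick is the standard device here. Your closing alternative, invoking directly ``the classical theorem that a \psh\ function on a closed complex subvariety of a Stein manifold extends plurisubharmonically'', is exactly the statement the proposition is proving, so that exit is circular.
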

\begin{proof}
As was noted in the proof of Theorem 2.3 in [Wik], the above statement is buried in the seminal work [FN]. We offer, however, for the convenience of the reader, a full proof of this important fact.
According to a classical result of Siu in [Siu], the variety $V$ admits an open Stein \nhd\ in $\mathbb C^n.$ Thus, by shrinking $D$ if necessary, we may assume $D$ is Stein.
In the Stein open set $D \times \mathbb C$, consider the Hartogs variety
$$\tilde V:=\{(z, w) \in V \times \mathbb C: \log \vert w\vert+u(z)<0\}.$$
Since $u \in PSH(V),$ the variety $\tilde V$ is also Stein. Applying Siu's theorem again, we obtain an open Stein \nhd\
$D' \subset D \times \mathbb C$ of $\tilde V.$ The point is to modify $D'$ to be a Hartogs domain. For this, we proceed as in [FN], p. 64. Let $D''$ be the interior of the set
$$\bigcap\limits_{t \in \C, \vert t\vert \ge 1} \{(z,w) \in D \times \mathbb C: (z,w/t) \in D'\}.$$
Then $D'',$ being the interior of the intersection of a family of Stein open sets in $\C^{n+1}$ is also Stein.
Further, we also have $D'' \cap (V \times \C)=\tilde V.$
Let $\tilde D:= \{(z,w): (z,0) \in D''\}$. Then $\tilde D$ is a Stein open set in $\C^{n+1}$ and
$\tilde D \cap (V \times \C)=\tilde V.$ Moreover, $\tilde D$ enjoys the following key property
$$(z,w) \in \tilde D, \vert w'\vert \le \vert w\vert \Rightarrow (z,w) \in \tilde D.$$
Hence $\tilde D$ is indeed a Hartogs domain in $\C^{n+1}$ and therefore may be represented by
$$\tilde D=\{(z,w) \in U \times \C: \log \vert w\vert +\tilde u(z)<0\},$$
where $U$ is an open subset of $D, V \subset U$ and $\tilde u$ is upper-semicontinuous on $U$. Since $\tilde D$ is Stein we infer that $\tilde u \in \PSH(U)$. Finally, since $\tilde D \cap (V \times \C)=\tilde V,$
by the definition of $\tilde V$ we get that $\tilde u|_V=u.$ This completes our proof.
\end{proof}
\begin{remark}
In the case where $V$ is a Stein {\it smooth} complex variety of $D$ then it is possible to choose a {\it common}
open Stein \nhd\ $U$ of $V$ such that every $u \in PSH(V)$ extends to a \psh\ function on $U.$
To see this, as before, we first choose a Stein \nhd\ $U \subset D$ of $V$. Then by the classical Bishop-Remmert-Narasimhan embedding theorem we may regard $V \subset U$ as closed submanifolds of $\C^{2n+1}.$
Then by an extension result of Sadullaev (see Theorem 3.2 in [BL]) we may extend every $u \in PSH(V)$ to an element
$\tilde u \in PSH(\C^{2n+1})$.
It follows that $\tilde u|_{U}$ is the desired extension of $u.$
It would be of interest to know if the extended function $\tilde u$ can be chosen to be {\it negative} if the initial function $u$ is so.
\end{remark}
\section{The class $\mathcal E_0 (V)$}
The aim of this section is to investigate in details the class $\mathcal E_0 (V) \subset PSH^{-} (V)$ whose elements  serve as test functions in convergence problems of the complex Monge-Amp\`ere operator.

We start with the following sufficient condition for hyperconvexity of a Stein complex variety.
\begin{proposition} \label{hypnhd}
Assume that the variety $V$ is Stein and
that for every $\xi \in \partial V$ there exists $u_{\xi} \in \PSH^{-} (D)$ satisfying
$\lim\limits_{z \to \xi} u_{\xi}(z)=0.$
Then there is a bounded hyperconvex domain $\tilde D$ in $D$ such that $V \subset \tilde D.$ In particular, $V$
is hyperconvex variety having a negative $\mathcal C^\infty-$smooth strictly \psh\ exhaustion function.
\end{proposition}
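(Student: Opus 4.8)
The plan is to realise $V$ inside a Stein \nhd\ which pinches onto $\bar V$ along $\partial D$, to manufacture out of the local barriers $u_\xi$ a continuous \psh\ function on that \nhd\ that is negative on $V$, tends to $0$ along $\partial V$, and is exhausting, then to take $\tilde D$ to be a sublevel set and regularise. \emph{Step 1 (a pinching Stein \nhd).} Because $D$ is bounded, $\bar V$ is compact and $\partial V=\bar V\cap\partial D$ is compact with $\partial V\cap D=\emptyset$. The open set $\Om_0:=\{z\in D:\text{dist}(z,V)<\text{dist}(z,\partial D)\}$ contains $V$ and satisfies $\bar\Om_0\cap\partial D=\partial V$: if $z_j\in\Om_0$ and $z_j\to p\in\partial D$ then $\text{dist}(z_j,V)\to0$, so $p\in\bar V$. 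Since $V$ is a closed Stein subvariety of $\Om_0$, Siu's theorem [Siu] gives an open Stein \nhd\ $\Om$ of $V$ with $V\subset\Om\subset\Om_0$; then $\bar\Om$ is compact, $\partial V\subset\partial\Om$, $\bar\Om\cap\partial D=\partial V$, and $\partial\Om\setminus\partial V=\partial\Om\cap D$ is closed in $D$. Being Stein, $\Om$ carries a $C^\infty$ strictly \psh\ exhaustion $h\ge1$ with $h(z)\to+\infty$ as $z\to\partial\Om$. Replacing each $u_\xi$ by $\max(u_\xi|_\Om,-1)$, we may assume $u_\xi\in PSH^-(\Om)$, $-1\le u_\xi<0$, and $\lim_{z\to\xi}u_\xi(z)=0$.

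\emph{Step 2 (a global exhausting barrier; the crux).} I want $G\in PSH(\Om)\cap C(\Om)$ such that (i) $G<0$ on $V$; (ii) $\lim_{z\to\xi}G(z)=0$ for every $\xi\in\partial V$; and (iii) $\{z\in\Om:G(z)\le c\}$ is relatively compact in $\Om$ for each $c<0$. Conditions (i)--(ii) are governed by the $u_\xi$: by compactness of $\partial V$, for every $j$ one picks finitely many $\xi_i$ whose balls $\{u_{\xi_i}>-2^{-j}\}$ cover $\partial V$, so that $g_j:=\max_i u_{\xi_i}\in PSH^-(\Om)\cap L^\infty(\Om)$ is $>-2^{-j}$ on a \nhd\ $W_j\supset\partial V$, and (via the gluing Lemma \ref{gluing}) one may first localise each barrier so that it equals a convenient function off a small ball about $\xi_i$. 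The delicate point --- which I expect to be the main obstacle --- is to combine the $g_j$ with a cut-off of the exhaustion $h$ so that the resulting $G$ is forced to be $\ge0$ near $\partial\Om\cap D$ while remaining $<0$ on \emph{all} of $V$ and $\to0$ along $\partial V$: the obvious choices $\max(g_j,\ve h-C)$ fail, since $h\to+\infty$ \emph{also} along $\partial V$, so one must localise both the barriers and the contribution of $h$. This is a Kerzman--Rosay/Josefson type globalisation of local barriers, carried out in the presence of the subvariety $V$ reaching $\partial\Om$. (It would in fact suffice to arrange only that $\{G<0\}$ be a bounded pseudoconvex domain possessing a local \psh\ barrier at every boundary point --- the $u_\xi$ along $\partial V$, a local defining function of $\Om$ along $\partial\Om\cap D$ --- and then invoke the Kerzman--Rosay theorem.)

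\emph{Step 3 ($\tilde D$ is bounded hyperconvex).} Granting Step 2, let $\tilde D$ be the connected component of $\{z\in\Om:G(z)<0\}$ containing $V$ (by (i) and connectedness of $V$, $V$ lies in a single such component). Then $\tilde D$ is a domain with $V\subset\tilde D\subset\Om\subset D$, and it is bounded. Being a sublevel set of a continuous \psh\ function in the Stein manifold $\Om$, $\tilde D$ is pseudoconvex. By (iii), $\{G\le c\}\cap\tilde D$ is a compact subset of $\tilde D$ for every $c<0$; hence $G|_{\tilde D}\in PSH^-(\tilde D)\cap L^\infty(\tilde D)$ has all its sublevel sets $\{G<c\}$ ($c<0$) relatively compact in $\tilde D$, i.e.\ it is a bounded negative \psh\ exhaustion of $\tilde D$. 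Thus $\tilde D$ is a bounded hyperconvex domain containing $V$.

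\emph{Step 4 (the $C^\infty$ strictly \psh\ exhaustion).} Since $\tilde D$ is bounded, pseudoconvex, and carries the continuous negative \psh\ exhaustion $G$, a Richberg--Demailly regularisation of $G$ with the positive continuous weight $-G/2$ produces $\Theta\in PSH(\tilde D)\cap C^\infty(\tilde D)$, strictly \psh, with $G\le\Theta\le G/2<0$; then $\{\Theta<c\}\subset\{G<c\}$ is relatively compact in $\tilde D$ for each $c<0$, so $\Theta$ is a negative $C^\infty$ strictly \psh\ exhaustion of $\tilde D$. (Upgrading the smoothing to be \emph{strictly} \psh\ while keeping it negative uses the pseudoconvexity of $\tilde D$ again, via a small strictly \psh\ correction spread over the relatively compact pieces $\{G<-1/m\}$.) Finally, $V$ is closed in $\tilde D$, so any subset of $\tilde D$ that is relatively compact in $\tilde D$ meets $V$ in a set relatively compact in $V$; hence $\Theta|_V\in PSH^-(V)\cap C^\infty(V)$ is strictly \psh\ with relatively compact sublevel sets, which shows that $V$ is hyperconvex and possesses a negative $C^\infty$ strictly \psh\ exhaustion function.
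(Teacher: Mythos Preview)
Your Step~2 is not a proof but an acknowledgment of the difficulty: you describe the desired $G$, note that the naive combinations $\max(g_j,\ve h-C)$ fail because $h\to+\infty$ along $\partial V$ as well, and then write ``Granting Step~2''. This is the heart of the argument, and it is genuinely missing. The parenthetical appeal to Kerzman--Rosay at the end of Step~2 does not close the gap either, because you still have to produce the candidate domain $\{G<0\}$ containing $V$ with the correct boundary structure before you can invoke any such theorem.

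The paper resolves exactly this obstacle with an idea you do not have: it uses the \emph{defining equations} of $V$. After passing (twice, via Siu) to a Stein neighbourhood $D^*$ of $V$ with $\partial D^*\cap\partial D'=\partial V$, one writes $V$ as the common zero set of finitely many holomorphic functions $f_1,\dots,f_l$ on the ambient Stein $D'$ and sets
\[
\psi:=\varphi+\log\bigl(|f_1|+\cdots+|f_l|\bigr),
\]
where $\varphi$ is a continuous psh exhaustion of $D^*$. The logarithmic term is $-\infty$ on $V$, so $V\subset\{\psi<0\}=:\tilde D$ automatically; and $\psi\to+\infty$ at every point of $\partial D^*\setminus\partial V$, so those points are excluded from $\overline{\tilde D}$ and $\psi$ serves as a barrier there. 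This single stroke decouples the two competing requirements that defeated your $\max(g_j,\ve h-C)$. To finish, the paper does \emph{not} try to make $\psi$ itself into the exhaustion (it is $-\infty$ on $V$); instead it takes the relative extremal function $u^*_{K,\tilde D}$ for a ball $K\subset\tilde D$ and checks $\lim_{z\to\xi}u^*_{K,\tilde D}(z)=0$ at each $\xi\in\partial\tilde D$ by comparing with $u_\xi$ (when $\xi\in\partial V$) or with $\psi$ (when $\xi\in\partial\tilde D\setminus\partial V$). The smooth strictly psh exhaustion is then imported from B{\l}ocki's result for hyperconvex domains.
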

\noindent
We do not know if the above result is true under the weaker assumption that
$u_\xi \in \PSH^{-} (V)$ for each $\xi \in \partial V.$
If this was the case then we would find a {\it continuous} negative \psh\ function on every hyperconvex variety.
\begin{proof}
Since $V$ is Stein, by the main theorem in [Siu] we may find a Stein variety $D' \subset D$ such that $D' \supset V.$ Let $\{K_j\}$ be a sequence of compact subsets of $V$ such that $K_j \uparrow V.$
For each $j$ we let
$$U_j:= \big \{z \in D': \text{dist}\ (z, K_j)<\min \{\fr1{j}, \fr1{2}\text{dist}\ (K_j, \partial D')\}\big\}.$$
Then $D'':= \bigcup\limits_{j} U_j$ is an open subset of $D'$ that contains $V$ and satisfies
$\partial D'' \cap \partial D' =\partial V.$
Applying Siu's theorem again we can find a Stein variety $D^*$ in $D''$ such that $D^* \supset V.$
Observe that we also have $\partial D^* \cap \partial D'=\partial V.$
Since $D'$ is Stein, $V$ is the common zero set of a {\it finite} number of holomorphic functions
$f_1, \cdots, f_l$ on $D'.$ Let $\va$ be a continuous \psh\ exhaustion function for $D^*$.
Following an idea in the proof of Theorem 2.3 in [Wik2], we set
$\psi:=\va+\log (\vert f_1\vert+\cdots+\vert f_l\vert).$ It is then easy to check that
$$\lim\limits_{z \to \xi} \psi(z)=+\infty, \ \forall \xi \in (\partial D^*) \setminus \partial V.$$
Now we define $\tilde D:=\{z \in D^*: \psi (z)<0\}.$ It follows that $V \subset \tilde D$ and
$\partial \tilde D \cap \partial D^*=\partial V.$ Therefore
\begin{equation} \label{barrier}
\lim\limits_{z \to \xi} \psi(z)=0, \ \forall \xi \in (\partial \tilde D) \setminus \partial V.
\end{equation}
Finally, we let $K \subset \tilde D$ be a closed ball. Define
$$u_{K,\tilde D} (z):=\sup\{u \in PSH^{-} (\tilde D): u|_{K} \le -1\}.$$
It follows that $\tilde u:=u^*_{K,\tilde D} \in PSH(\tilde D), \tilde u \le 0$ on $\tilde D$ and $\tilde u=-1$ on the interior of $K$.
Hence $\tilde u \in PSH^{-}(\tilde D)$, by the maximum principle. It is also clear that
$\sup\limits_{K} u_\xi>-\infty$ for every $\xi \in \partial V.$
Thus, by the assumption we obtain
$$0 \ge \ \varlimsup\limits_{z \to \xi} \tilde u(z) \ge \varliminf\limits_{z \to \xi} \tilde u(z) \ge \lim\limits_{z \to \xi} \fr{1}{-\sup\limits_{K} u_\xi}u_\xi (z)=0, \ \forall \xi \in \partial V.$$
Hence $\lim\limits_{z \to \xi} \tilde u (z)=0$ for every $\xi \in \partial V.$
On the other hand, (\ref{barrier}) implies that
$$0 \ge \varlimsup\limits_{z\to \xi} \tilde u(z) \ge \varliminf\limits_{z \to \xi} \tilde u (z)\ge \lim_{z \to \xi} \psi(z)=0 \
\forall \xi \in (\partial \tilde D) \setminus \partial V.$$
Hence $\tilde u$ is a negative \psh\ exhaustion function for $\tilde D$.
So $\tilde D$ is the desired hyperconvex \nhd\ of $V$. For the last statement, it suffices to take
a negative  $\mathcal C^\infty-$smooth strictly \psh\ exhaustion function $\theta$ of $\tilde D$
(see Theorem 1.6 in [B\l]) and consider the restriction of $\theta$ on $V.$
\end{proof}
\begin{remark} \label{exhaustion1}
If $V$ admits a negative continuous exhaustion function $\rho$ then it also has a $\mathcal C^\infty-$smooth negative strictly \psh\ exhaustion function. This has been done in the case of open domains in [B\l].
The proof for complex varieties $V$ is similar.
For the convenience of the reader, we will briefly outline.
Choose $M>0$ so large such that $\va(z):=\vert z\vert^2-M<0$ on $V.$ Set $\tilde \rho:=-2\sqrt{-\va\rho}.$
By a direct computation as in [B\l], p. 728, we can check that $\tilde \rho$ is a negative continuous {\it strictly}
\psh\ exhaustion function
for $V.$ Now we let $f(z):= \min \{-\rho(z), \text{dist}\ (z, \partial V)\}$. Then $f$ is a positive continuous function on $V$ that tends to $0$ at $\partial V.$ By a classical approximation theorem of Richberg, we can find a
$\CC^\infty-$smooth strictly \psh\ function $\hat \rho$ on $V$ such that $\tilde \rho<\hat \rho <\tilde \rho+f$ on $V$.
In particular, $\hat \rho \in PSH^{-} (V)$ is an exhaustion function for $V.$
\end{remark}
\noindent
From now on, unless otherwise stated, we always assume that $V$ is {\it hyperconvex}, i.e., $V$ admits a negative \psh\ exhaustion function $\rho$, that is
\begin{equation}
\label{exhau}
\rho \in PSH^{-}(V) \cap L^\infty(V), \lim_{z \to \partial V} \rho(z)=0.
\end{equation}
It is not clear to us if the exhaustion function $\rho$ can be chosen to be {\it continuous} on $V.$
See Proposition \ref{hypnhd} above for a partial result.
We now present an approximation result which is of interest in its own right.
\begin{proposition}\label{pro1}
Assume that the function $\rho$ in (\ref{exhau} is continuous on $V.$
Then for every $u \in  PSH^{-} (V)$ there  exists a sequence  $u_j \in PSH^-(V) \cap \CC (\ov{V})$ such that
$u_j|_{\partial  V}= 0$ and  $u_j \downarrow  u$ on $V$.
\end{proposition}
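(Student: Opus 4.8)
The plan is to produce $u_j$ in two stages: first make $u$ continuous on compact subsets of $V$ by extending it to a \psh\ function on an ambient neighbourhood and mollifying there, and then repair the boundary values to $0$ by gluing in a suitable multiple of the continuous exhaustion $\rho$ along a level set of $\rho$; the two external inputs are the extension Proposition \ref{extension} and the gluing Lemma \ref{gluing}. One can first reduce to the case $-1\le u<0$: applying the statement to $\max\{u,-N\}$ and letting $N\to\infty$, together with $\max\{u,-N\}\downarrow u$, handles a general $u$, the only mild technicality being a diagonalisation to keep the resulting sequence decreasing.

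Assume then $-1\le u<0$. Since $V$ is hyperconvex, hence Stein, Proposition \ref{extension} gives an open Stein neighbourhood $U\supset V$ and $\tilde u\in PSH(U)$ with $\tilde u|_V=u$; replacing $\tilde u$ by $\max\{\tilde u,-1\}$ we may assume $\tilde u\ge-1$. Because $D$ is bounded, $K_j:=\{\rho\le-1/j\}$ is a compact subset of $V\subset U$. Taking $\varepsilon_j\downarrow0$ small enough (depending on $j$), the mollifications $h_j:=\tilde u*\rho_{\varepsilon_j}$ are smooth, \psh\ and strictly negative on a neighbourhood of $K_j$, satisfy $-1\le h_j$, $h_j\ge\tilde u$ with $h_j\downarrow\tilde u$, and, by the maximum principle, $h_j\le C_j:=\max_{\{\rho=-1/j\}}h_j<0$ on $\{\rho<-1/j\}$. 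Choosing $A_j>0$ with $A_j\le j|C_j|$ (so that $A_j\rho\ge C_j$ on $\{\rho=-1/j\}$) and setting
$$u_j:=\begin{cases}\max\{h_j,\,A_j\rho\} & \text{on }\{\rho<-1/j\},\\[2pt] A_j\rho & \text{on }\{\rho\ge-1/j\},\end{cases}$$
the two pieces agree on $\{\rho=-1/j\}$ (there $h_j\le C_j\le A_j\rho$), so Lemma \ref{gluing} yields $u_j\in PSH(V)$; continuity of $\rho$ on $\bar V$ gives $u_j\in\CC(\bar V)$ with $u_j|_{\partial V}=A_j\rho|_{\partial V}=0$, and $u_j<0$ since both $h_j$ and $A_j\rho$ are negative. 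Thus $u_j\in PSH^-(V)\cap\CC(\bar V)$ with the required boundary behaviour.

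What remains — and this is the main obstacle — is to choose the parameters $\varepsilon_j$ and $A_j$ so that, in addition, $u_j\downarrow u$. Monotonicity $u_{j+1}\le u_j$ follows once $\varepsilon_j$ is decreasing and $A_j$ increasing, checked region by region; for convergence, a fixed $z$ with $\rho(z)<0$ lies in $\{\rho<-1/j\}$ for large $j$, where $u_j(z)=\max\{h_j(z),A_j\rho(z)\}$ with $h_j(z)\downarrow u(z)$, so one needs $A_j\rho(z)\le u(z)$ eventually, i.e. $A_j$ must grow. When $u$ stays away from $0$ near $\partial V$ this is automatic — then $|C_j|$ is bounded below, so $A_j\sim j|C_j|\to\infty$ does the job — and more generally it works whenever $u/\rho$ is controlled near $\partial V$ (then a bounded multiplier $A_j$ close to $\sup_V u/\rho$ suffices). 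In the remaining situations, where the size of $u$ relative to $\rho$ varies over $\partial V$, a single multiple of $\rho$ cannot work, and one must instead glue the functions $A_j^{(k)}\rho$ successively over the shells $\{-1/k\le\rho<-1/(k+1)\}$ for $1\le k\le j$, with multipliers $A_j^{(k)}$ increasing in $k$ and adapted to the size of $u$ on each shell, so that the glued function still coincides with (a mollification of) $u$ on $K_j$ and converges to $u$. Verifying that such a nested family of gluings remains \psh\ via Lemma \ref{gluing}, is decreasing in $j$, and tends to $u$ is the real content; once the parameters are fixed, all the asserted properties are read off on each region.
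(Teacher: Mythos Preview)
Your approach differs from the paper's: you extend $u$ to an ambient Stein neighbourhood via Proposition~\ref{extension} and mollify there, whereas the paper stays intrinsic on $V$, invoking the Fornaess--Narasimhan approximation theorem to produce continuous strictly plurisubharmonic $\varphi_j\downarrow u$ on $V$ directly. Both routes then glue with a multiple of $\rho$ near $\partial V$ using Lemma~\ref{gluing}. So far the strategies are parallel.

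The gap is exactly where you say it is, and it is not closed. First, your monotonicity claim is wrong as stated. On the shell $\{-\tfrac1j\le\rho<-\tfrac1{j+1}\}$ one has $u_j=A_j\rho$ while $u_{j+1}=\max\{h_{j+1},A_{j+1}\rho\}$; for $u_{j+1}\le u_j$ you would need $h_{j+1}\le A_j\rho$ there, but $h_{j+1}\ge u$ and there is no reason $u\le A_j\rho$ on that shell (take e.g.\ a $u$ that is close to $0$ on part of the shell while $\rho\approx -1/j$). So ``checked region by region'' fails precisely on the transitional shell. Second, the convergence part is only sketched: your constraint $A_j\le j|C_j|$ forces $A_j$ to stay bounded whenever $u$ (hence $C_j$) tends to $0$ along some approach to $\partial V$, so $A_j\rho$ need not sink below $u$ at interior points. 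Your proposed repair by nested gluings with shell-dependent multipliers $A_j^{(k)}$ is not carried out, and it is not at all clear such a family can be made simultaneously plurisubharmonic, continuous on $\overline V$, decreasing in $j$, and convergent to $u$.

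The paper meets this difficulty head-on but by a different device. Having built the glued functions $\tilde u_j$ (the analogue of your $u_j$, with no monotonicity claimed), it \emph{forces} monotonicity by setting $u_j:=\sup_{m\ge j}\tilde u_m$. This is automatically decreasing with limit $u$, but continuity is now the issue; the substance of the paper's proof is a Dini-type argument (Lemmas~\ref{lm1}--\ref{lm3}) showing that the finite suprema $h_{p,j}=\sup_{j\le m\le p}\tilde u_m$ converge \emph{uniformly on compacta} to $u_j$, whence $u_j\in\CC(\overline V)$. That uniform-convergence step is the ``real content'' you allude to, and your proposal does not supply a substitute for it.
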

A weaker version of the above result was established, using the same scheme as in the case of domains in $\C^n$ (cf. Theorem 2.1 in [Ce1]), was established in Theorem 2.3 of [Wik1] where the ambient domain $D$ is assumed to be hyperconvex.
Notice also that if the continuity requirement on $u_j$ is dropped then we may simply take $u_j:= \max \{u,j\rho\}.$

The proof of Proposition \ref{pro1} requires some elementary facts about behavior of sequences of upper semicontinuous and lower semicontinuous functions on subsets of $\mathbb C^n.$
\begin{lemma}\label{lm1}
Let $\{ f_j\}_{j \ge 1}$ is a decreasing sequence of upper semicontinuous functions defined on a compact $K \subset \C^n$ and $g$ be a lower semicontinuous continuous  function  on $K$ such that
$$\lim_{j \to \infty} f_j (x) \le g(x),  \forall  x \in K.$$ Then for every
$\varepsilon  > 0$ there exists  $j_0$  such that  if $j \ge j_0$ then
$$f_j (x)<g(x)+\varepsilon,\ \forall  x \in  K.$$
\end{lemma}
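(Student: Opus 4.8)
\emph{Proof proposal.} The plan is to run the classical compactness proof of Dini's theorem, adapted to the semicontinuous setting. Fix $\ve>0$ and, for each $j\ge 1$, introduce the set
$$U_j:=\{x\in K:\ f_j(x)<g(x)+\ve\}.$$
The first step is to observe that each $U_j$ is relatively open in $K$. Indeed, one can write
$$U_j=\bigcup_{q\in\mathbb Q}\big(\{x\in K:\ f_j(x)<q\}\cap\{x\in K:\ g(x)>q-\ve\}\big),$$
and $\{f_j<q\}$ is open because $f_j$ is upper semicontinuous, while $\{g>q-\ve\}$ is open because $g$ is lower semicontinuous; hence $U_j$ is a union of relatively open sets.

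The second step uses monotonicity: since $f_{j+1}\le f_j$ on $K$, we have $U_j\subset U_{j+1}$, so $\{U_j\}_{j\ge1}$ is an increasing sequence of open subsets of $K$. The third step is to check that these sets cover $K$. For each $x\in K$ the sequence $f_j(x)$ decreases to $\lim_{j\to\infty}f_j(x)\le g(x)<g(x)+\ve$, so there is an index $j$ (depending on $x$) with $f_j(x)<g(x)+\ve$, i.e. $x\in U_j$. Thus $\{U_j\}$ is an open cover of the compact set $K$; extracting a finite subcover and invoking the nestedness $U_1\subset U_2\subset\cdots$, we obtain a single index $j_0$ with $U_{j_0}=K$. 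Then for every $j\ge j_0$ we have $U_j\supset U_{j_0}=K$, which is precisely the assertion $f_j(x)<g(x)+\ve$ for all $x\in K$.

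The only delicate point, and the one place where a bit of care is needed, is the bookkeeping with the value $-\infty$: the argument requires $g(x)>-\infty$ at every $x\in K$, so that the strict inequality $\lim_{j\to\infty}f_j(x)<g(x)+\ve$ genuinely holds and forces $x\in U_j$ for $j$ large; this is harmless in the intended application, where $g$ is in fact real-valued (indeed continuous). No other step presents an obstacle, since everything else reduces to the standard covering argument.
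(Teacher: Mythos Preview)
Your proof is correct and is essentially the paper's argument phrased in the dual language of open covers: the paper sets $K_j:=\{x\in K:\ f_j(x)-g(x)\ge\varepsilon\}$, observes that these are compact, decreasing, and have empty intersection, and concludes that some $K_{j_0}$ is empty---your $U_j$ are precisely the complements $K\setminus K_j$. Your remark about needing $g>-\infty$ is well taken and matches the paper's implicit hypothesis (in the application $g$ is a real constant).
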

\begin{proof}
For $j \ge 1$, we let  $K_j: =\{x \in K: f_j (x)-g(x) \ge \varepsilon \}.$
By the assumptions, we infer that $\{K_j\}_{j \ge 1}$ is a decreasing sequence of compact sets such that
$\bigcap_{j \ge 1} K_j =\emptyset.$
Thus we can find an index $j_0 \ge 1$ such that $K_j=\emptyset$ for $j \ge j_0.$
This proves our lemma.
\end{proof}
\begin{lemma}\label{lm2}
Let $X$  be a subset  of  $\C^n$  and  $\{\va_j\}_{j \ge 1}$  be a  sequence  of  lower semicontinuous  functions  on  $X$  that  increases  to  a lower  semicontinuous  function  $\va$ on $X$.
Then  for every  sequence  $\{a_j\}_{j \ge 1} \subset  X$  with  $a_j  \to  a \in X$ we have
$$\va (a) \le \varliminf\limits_{j\to \infty} \va_j (a_j).$$
\end{lemma}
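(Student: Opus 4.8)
The plan is to argue by a soft superlevel-set argument, dual to (but easier than) the finite-intersection argument used for Lemma \ref{lm1}. If $\va(a)=-\infty$ there is nothing to prove, so I would fix an arbitrary real number $t<\va(a)$ and reduce to showing $\varliminf_{j\to\infty}\va_j(a_j)\ge t$; letting $t\uparrow\va(a)$ afterwards then gives the claim, including the case $\va(a)=+\infty$.

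For each $j$ set $E_j:=\{x\in X:\va_j(x)>t\}$. Since $\va_j$ is lower semicontinuous, $E_j$ is relatively open in $X$, i.e.\ $E_j=\Om_j\cap X$ for some open $\Om_j\subset\C^n$. Because $\{\va_j\}$ is increasing, the sets $E_j$ increase with $j$; and because $\va_j\uparrow\va$ pointwise, $\bigcup_{j\ge1}E_j=\{x\in X:\va(x)>t\}$, which contains $a$ by the choice of $t$. Hence $a\in E_{j_0}$ for some $j_0$. Now I would invoke the convergence $a_j\to a$: choosing an open set $\Om_{j_0}\subset\C^n$ with $a\in\Om_{j_0}$ and $\Om_{j_0}\cap X=E_{j_0}$, there is $N\ge j_0$ such that $a_j\in\Om_{j_0}$ for all $j\ge N$, and then $a_j\in\Om_{j_0}\cap X=E_{j_0}\subset E_j$, so that $\va_j(a_j)>t$ for all $j\ge N$. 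This yields $\varliminf_{j\to\infty}\va_j(a_j)\ge t$, and letting $t\uparrow\va(a)$ finishes the proof.

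There is no real obstacle here; the argument is purely formal. The only points worth watching are that, in contrast to Lemma \ref{lm1}, no compactness of $X$ is needed---one only needs a single $E_{j_0}$ containing the limit point $a$---and that before applying $a_j\to a$ one should pass to a genuine open subset $\Om_{j_0}$ of $\C^n$ rather than working directly with the merely relatively open set $E_{j_0}\subset X$.
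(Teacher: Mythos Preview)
Your proof is correct. The paper's argument is a touch more direct: rather than fixing a level $t<\va(a)$ and working with superlevel sets, it simply freezes an index $k$, uses the monotonicity $\va_k(a_j)\le\va_j(a_j)$ for $j\ge k$ together with lower semicontinuity of $\va_k$ at $a$ to obtain $\va_k(a)\le\varliminf_{j\to\infty}\va_j(a_j)$, and then lets $k\to\infty$. Your superlevel-set argument is essentially the same idea unwound through the topological characterization of lower semicontinuity; it makes the role of the ambient open sets in $\C^n$ explicit, while the paper's version is a two-line $\varliminf$ manipulation that never leaves $X$.
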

\begin{proof} For $j \ge k$ we have  $\va_k (a_j) \le \va_j (a_j)$. By letting $j \to \infty$ and using lower semicontinuity of $\va_k$ at $a$ we obtain
$$\va_k (a) \le \varliminf\limits_{j\to \infty} \va_k (a_j)\le\varliminf\limits_{j\to \infty} \va_j (a_j).$$
The desired conclusion follows by letting $k \to \infty$ in the right hand side.
\end{proof}
\begin{lemma}\label{lm3}  Assume that $ \{ f_j\}_{j\ge 1} $ is a sequence of upper- semicontinuous functions defined on a compact $K$ which decrease to an upper-semicontinuous  function $ f$. Then
$$
\lim\limits_{j\to\infty} (\max\limits_K f_j ) = \max\limits_K f.$$
\end{lemma}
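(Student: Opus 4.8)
The plan is to exploit the Dini-type phenomenon that already appeared in Lemma \ref{lm1}. Write $m_j:=\max_K f_j$ and $m:=\max_K f$. Since the sequence $\{f_j\}$ is decreasing and each $f_j\ge f$, the numbers $m_j$ form a decreasing sequence bounded below by $m$, so $L:=\lim_{j\to\infty} m_j$ exists and $L\ge m$. The content of the lemma is therefore the reverse inequality $L\le m$.

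**To prove $L\le m$**, I would fix $\varepsilon>0$ and apply Lemma \ref{lm1} with the constant function $g\equiv m$: since $f$ is upper semicontinuous on the compact set $K$ and $f\le m$ everywhere on $K$, and the $f_j$ decrease to $f$, we have $\lim_{j\to\infty} f_j(x)=f(x)\le m = g(x)$ for all $x\in K$. Hence Lemma \ref{lm1} produces an index $j_0$ with $f_j(x)<m+\varepsilon$ for all $x\in K$ and all $j\ge j_0$, which gives $m_j=\max_K f_j\le m+\varepsilon$ for $j\ge j_0$, so $L\le m+\varepsilon$. Letting $\varepsilon\downarrow 0$ yields $L\le m$, and combined with $L\ge m$ this finishes the proof. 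Alternatively, one can argue directly with nested compact sets $\{x\in K: f_j(x)\ge m+\varepsilon\}$ exactly as in the proof of Lemma \ref{lm1}, but invoking that lemma as a black box is cleaner.

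**The only point requiring a moment of care** is that the supremum defining $m$ and $m_j$ is genuinely attained, which is where compactness of $K$ together with upper semicontinuity of $f$ and of each $f_j$ is used — an upper semicontinuous function on a nonempty compact set attains its maximum. One should also note that the lemma is only interesting (and only stated) for $K\neq\emptyset$; if $K=\emptyset$ the statement is vacuous. Beyond that there is no real obstacle: this is a soft, purely topological fact with no pluripotential-theoretic input, and the proof is two or three lines once Lemma \ref{lm1} is available.
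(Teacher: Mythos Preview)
Your proof is correct and follows essentially the same approach as the paper: the paper argues directly with the nested compact sets $A_j=\{x\in K: f_j(x)\ge b\}$ for some $b$ strictly between $\max_K f$ and the putative limit, which is exactly the alternative you mention at the end. Invoking Lemma \ref{lm1} with the constant function $g\equiv m$ is a clean way to package that same nested-compacts argument.
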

\begin{proof}
We have $\max_K f_j \downarrow a \ge \max_ K f$. So it is enough to prove the reverse inequality. Assume that
$\max\limits_K<b<a$ for some $ b$. Then  the sets $A_j:= \{ x \in K : f_j (x) \ge b\}$ is nonempty for all  $j$. Moreover,  form  the upper-semicontinuity of the  $f_j$ it follows that the  sequence $A_j$ is compact  and decreasing, hence  $\bigcap\limits_{j\ge 1} A_j \ne \emptyset $. It implies that there exists  $z \in K$ such that  $f(z) \ge b$. Hence $ \max_K f \ge f(z) \ge b$, this is a contradiction.
\end{proof}
\begin{proof} (Proposition \ref{pro1}) By the assumption, there exists
$\rho \in PSH^{-} (V) \cap \CC (\ov V), \rho|_{\partial V}=0.$
For each  $j\ge 1$,  we put
$V_j:=\{ z \in V: \rho(z)<-\frac1{2j^2}\}$. Then  $V_j \subset \subset V$ and  $V_j \uparrow  V$.
Since $V$ is  Stein,  Fornaess-Narasimhan's approximation theorem yields a sequence of strictly \psh\ functions
$\va_j \in  PSH(V) \cap \CC(V)$  such that $\va_j  \downarrow  u$  on  $V$.
Then because   $  u< 0$ on  $  \ov{V}_j$, by Lemma \ref{lm1} we can choose $l(j)>j$ such that
$\va_{l(j)}<\frac1{2j}$ on  $ \ov {V_j}$.
 We  define
$$\tilde{u}_j(z):=
\begin{cases}
\max  \big\{\va_{l(j)}(z)-\frac1j,  j \rho(z)\big\} &  z \in V_j\\
j\rho(z)& z \in \ov{V} \setminus  V_{j}.
\end{cases}$$
Then on $V \cap \partial V_j $ we have $\va_{l(j)}-\frac1j=(\va_{l(j)} -\frac1{2j}) -\frac1{2j} \le - \frac1{2j} = j\rho$,  thus  $\tilde{u}_j \in  PSH^-(V) \cap  C(\ov V)$  and  it is easy to see that     $\tilde u_j \to u$ pointwise on $ V$ as $j \to \infty$ and   $\tilde{u}_j|_{\partial V} = 0$ .
Fix $j\ge  1$, for  each  $p \ge  j$ we set
$$h_{p,j}:= \sup\limits_{ j\le m\le p} \tilde{u}_m, u_j:= \sup\limits_{j \le m} \tilde{u}_m.$$
It is then clear that $h_{p,j} \in PSH (V) \cap  \CC(\ov V)$ and  $h_{p,j} \uparrow  u_j$ on $\ov{V}$.
Moreover, $u_j \ge \lim\limits_{m \to \infty} \tilde{u}_m=u$ on $V.$
Now we claim that $h_{p,j}$ is uniformly  convergent  to $u_j$  on every compact  subset of  $V$ as  $p \to  \infty$.  Take a compact  $K \subset  V$. If the uniform convergence fails on $K$, then there exist
$\ep > 0$ and  a  sequence   $\{z_p \}_{p\ge 1} \subset K$  satisfying
\begin{equation}\label{eqa}
 h_{p,j}(z_p)+\ep <u_j (z_p),\ \forall  p \ge  j.
 \end{equation}
 By the definition of $u_j$ and since  (\ref{eqa}), there exists $m(p) > p$  such that
\begin{equation}\label{eqb}
 h_{p,j}(z_p)+\frac\ep2<  \tilde{u}_{m(p)}( z_p) ,\ \forall  p \ge  j.
 \end{equation}
After switching  to a subsequence if necessary, we  can assume that $z_p \to z^* \in K.$
By the definition of $\tilde u_{m(p)}$ and taking into account the fact that $\rho<0$ on $V$ we obtain
\begin{equation}\label{eqc}
\tilde{u}_{m(p)}= \va_{l(m(p))}-\frac1{m(p)} \  \text{on}\ K,
\end{equation}
for all $p \ge p_0$ large enough.
Combining (\ref{eqb}), (\ref{eqc}) with  the fact that  $\{\va_p\}_{p\ge1}$ is decreasing we  get
$$h_{p,j}(z_p)+\frac\ep2 \le  \va_{l(m(p))}(z_p)-\frac1{m(p)} \le  \va_{p}(z_p) - \frac1{m(p)}.$$
Thus, by letting $p \to \infty$  and taking limsup in both sides we get
$$u_j(z^*) + \frac\ep2\le  \varlimsup\limits_{p\to\infty} \va_p(z_p) \le   u (z^*),$$
where  the last inequality  follows from  Lemma \ref{lm2}.
This is a contradiction to the fact that  $u_j(z) \ge  u(z)$ on  $V$.
Thus, our claim is fully proved.
Since $h_{p,j} \in PSH^-(V) \cap  \CC(\ov V)$, it follows that $u_j \in PSH^{-}(V) \cap \CC(\ov V)$ with $u_j|_{\partial V}=0.$
Finally, we notice that
$$\lim_{j \to \infty} u_j= \varlimsup\limits_{m\to\infty} \tilde u_m=u.$$
The proof is complete.
\end{proof}
In order to obtain a full generalization of Theorem 2.1 in [Ce2], we impose the following additional conditions on $V.$
\begin{definition} \label{singular}
The variety $V$ is said to be mildly singular if it is
contained in a connected complex variety $V'$ of pure dimension $k$
in some domain $D', D \subset D'$
such that $\ov{V_{red}} \cap \partial V \subset V',$ and $\rho$ in (\ref{exhau}) extends to a bounded \psh\ function on $V'$. Moreover, $V$ is called very mildly singular if $V_{\red}$ is a finite set.
\end{definition}
\begin{remark}
(a) If $V$ is very mildly singular then it is mildly singular since in that case $\ov{V_{red}} \cap \partial V=\emptyset$ so we may take $V'=V$.

\noindent
(b) Let $V$ be a Stein variety in a domain $D \subset \mathbb C^n$ and $\rho \in \PSH(V)$ be a \psh\ exhaustion
function for $V$. Then for each $a>0, V \cap D_a$ is a mildly singular (hyperconvex) variety in $D_a,$
where $D_a$ is the connected component of $\{\rho<a\}$.

\noindent
(c) The Whitney umbrella $V^1$ defined in the introduction is obviously mildly singular and hyperconvex. On the other hand, the variety $V^2$ is not so, since there are infinitely many branches of $V^2$ clustered at some boundary point of $\partial V^1$. Notice that $V^2$ admits a {\it continuous} negative \psh\ exhaustion function which is the restriction on $V^2$ of such an exhaustion function of the hyperconvex domain $\mathbb B.$

\end{remark}
\noindent
The above notions will provides us with a class of varieties $V$ on which $\mathcal E_0 (V) \ne\emptyset.$
\begin{proposition}\label{e0}
Let $V$ be a mildly singular hyperconvex variety and $E$ be an open subset of $V$ (not necessarily connected) such that
$\ov{V_{\red}} \cap V \subset E$ and $\ov{E} \cap \partial V=\ov{V_{red}} \cap \partial V.$

Set $\tilde E:= E \cup (V' \setminus \ov{V}),$ and define
\begin{equation}\label{extremal}
u(z):= \sup\{v(z): v \in PSH(V'), \sup_{V'} v \le \sup_{V'} \rho, v|_{V}<0, u|_{\tilde E} \le \rho \}, z \in V'.
\end{equation}
Then $u$ has the following properties:

\noindent
(a) $u \in \PSH (V') \cap L^\infty (V'), u|_V<0, \lim\limits_{z \to \partial V} u(z)=0;$

\noindent
(b)$(dd^c u)^k=0$ on $V'\setminus \ov{\tilde E};$

\noindent
(c)$\int\limits_V (dd^c u)^k<\infty.$

\noindent
Consequently, for each open relatively compact subset $U$ of $V$ with $V \cap E=\emptyset$
we can find $\rho' \in \mathcal E_0 (V)$ such that $dd^c \rho'=\omega$ on $U.$
\end{proposition}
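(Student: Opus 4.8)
\medskip
\noindent\emph{Proof plan.} The plan is to establish the three itemized properties of the envelope $u$ of \eqref{extremal} and then to read off the last assertion by an explicit maximum construction.

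First I would record the elementary facts. Since $\rho$ itself is a competitor in \eqref{extremal} (recall that, by mild singularity, $\rho$ extends to a bounded element of $\PSH(V')$), the defining family is nonempty and $\rho\le u\le\sup_{V'}\rho<\infty$; in particular $u$ is bounded. Because every competitor $v$ satisfies $v\le\rho$ on the open set $\tilde E$ while $\rho$ is admissible, the supremum is attained there, so $u=\rho$ on $\tilde E$ and hence $u^*=\rho$ on $\tilde E$. For the plurisubharmonicity of $u^*$ on $V'$ I would argue locally: on $\tilde E$ it equals $\rho$; on $V'\setminus\ov{\tilde E}$ one checks from the hypotheses ($\ov{V_{\red}}\cap V\subset E$, $\ov{V_{\red}}\cap\partial V\subset V'$ and $V'\setminus\ov V\subset\tilde E$) that $V'$ is locally irreducible there — equivalently $V'_{\red}\subset\ov{\tilde E}$ — so Proposition~\ref{regular} applies and $u^*\in\PSH(V'\setminus\ov{\tilde E})$; the remaining points lie on $\partial\tilde E$ and I would patch across them by a gluing argument in the spirit of Lemma~\ref{gluing}, joining the \psh\ function $\rho$ (on the $\tilde E$--side) with the \psh\ envelope on the other side. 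Granting $u=u^*\in\PSH(V')$, the bound $u|_V<0$ follows from the maximum principle on the connected variety $V$ (which is relatively open in $V'$): if $u(z_0)=0$ for some $z_0\in V$ then $u|_V\equiv0$, contradicting $u=\rho<0$ on the nonempty open set $E\subset V$; and $\lim_{z\to\partial V}u(z)=0$ follows by squeezing $\rho\le u\le0$ near $\partial V$. This gives (a).

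For (b) I would use the usual balayage: given $x\in V'\setminus\ov{\tilde E}$, take a small ball $B\subset\subset V'\setminus\ov{\tilde E}$ about $x$ and replace $u$ on $B$ by the maximal \psh\ function with the same boundary values on $\partial B$; the new function is still a competitor in \eqref{extremal} (the constraint on $\tilde E$ is untouched, it remains $\le0$ on $V$ and $\le\sup_{V'}\rho$), hence $\le u$, so it agrees with $u$ on $B$ and $(dd^c u)^k=0$ on $B$. For (c), note that by (b) the measure $(dd^c u)^k$ is carried by $\ov{\tilde E}\cap V=\ov E\cap V$. Its restriction to any sublevel set $\{\rho\le-\de\}$ lives on a compact subset of $V$ where $u$ is a bounded \psh\ function, hence has finite mass; the part of the mass clustering at $\partial V$ sits over the compact set $\ov E\cap\partial V=\ov{V_{\red}}\cap\partial V\subset V'$, which I would cover by finitely many small $V'$--balls on each of which $u$ is bounded and \psh, so each contributes finite mass. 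Summing gives $\int_V(dd^c u)^k<\infty$.

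For the last assertion, given $U\subset\subset V$ let $u$ be as above and set, with $M:=\sup\{|z|^2:z\in D\}$ (so $|z|^2-M<0$ on $V\subset D$),
\[
\rho':=\max\{A\,u,\ |z|^2-M\}\qquad\text{on }V,
\]
with $A>0$ to be chosen large. This is a bounded negative \psh\ function on $V$ (a maximum of two \psh\ functions), and $\rho'\to0$ at $\partial V$ because $A\,u\le\rho'\le\max\{A\,u,0\}$ while $A\,u\to0$ there. Since $u$ is upper semicontinuous and $<0$ on $V$ by (a), $\max_{\ov U}u<0$, so for $A$ large the open set $\{A\,u<|z|^2-M\}$ (a sublevel set of an upper semicontinuous function) contains $\ov U$; thus $\rho'=|z|^2-M$ there and $dd^c\rho'=dd^c|z|^2=\om$ on $U$. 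Finally, applying Corollary~\ref{corBe} to the pair $A\,u\le\rho'$ (both bounded, negative, with zero boundary values) gives $\int_V(dd^c\rho')^k\le A^k\int_V(dd^c u)^k<\infty$ by (c). Hence $\rho'\in\E_0(V)$ and $dd^c\rho'=\om$ on $U$.

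The step I expect to be the main obstacle is the plurisubharmonicity of $u^*$ across $\partial\tilde E$, precisely at the reducible points of $V'$ that lie on $\partial V$: Proposition~\ref{regular} is unavailable there, and this is exactly where the ``mildly singular'' hypothesis is needed — one requires $V'$ to contain $\ov{V_{\red}}\cap\partial V$ and to carry the extension of $\rho$, so that near such points $u$ can be compared with a genuinely \psh\ function on $V'$. Cegrell's argument for domains does not adapt verbatim here, which is the reason for building the constraint ``$v|_{\tilde E}\le\rho$'' into the envelope in the first place.
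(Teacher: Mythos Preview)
Your proposal is correct and follows essentially the same route as the paper: envelope bounds plus Proposition~\ref{regular} for (a), Poisson modification (via Lemma~\ref{local} and Lemma~\ref{gluing}) for (b), the Chern--Levine--Nirenberg inequality on $V'$ for (c), and the identical $\max\{\lambda u,\,|z|^2-M\}$ construction together with Corollary~\ref{corBe} for the final assertion. The only differences are cosmetic: for (c) the paper compresses your two-part estimate into the single observation that $V\cap\ov{\tilde E}$ is relatively compact in $V'$ (equivalent to what you wrote, since $\ov E\cap\partial V=\ov{V_{\red}}\cap\partial V\subset V'$), and for (a) the paper dispatches the point you flag about $V'_{\red}$ in one line via Proposition~\ref{regular}, without an explicit gluing across $\partial\tilde E$.
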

We require the following lemma which will also be needed in the last section.
\begin{lemma}\label{local}
Let $\mathbb B$ be a ball in $\C^k$ and $f\ge 0$ be a upper-semicontinuous functions on
$\mathbb B$. Let $u \in PSH(\B) \cap L^\infty(\B)$ be such that
$(dd^c u)^k \ge f\om^k.$
Then there exists $\tilde u \in PSH(\B)\cap L^\infty(\B)$ satisfying
$$(dd^c \tilde u)^k=f\om^k, \tilde u \ge u, \lim_{z \to \xi} \tilde u(z)=u^*(\xi), \forall \xi \in \partial \B.$$
\end{lemma}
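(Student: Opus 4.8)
The plan is to realise $\tilde u$ as a Perron--Bremermann envelope and to prove the Monge--Amp\`ere identity $(dd^c\tilde u)^k=f\om^k$ by a local balayage argument, using the comparison principle (Theorem \ref{thmBe}) and the gluing lemma (Lemma \ref{gluing}). Since $u\in L^\infty(\B)$, the function $\varphi(\xi):=u^*(\xi)=\varlimsup_{z\to\xi}u(z)$, $\xi\in\partial\B$, is a bounded upper semicontinuous function on the sphere $\partial\B$, and $f$, being upper semicontinuous on $\B$, is locally bounded, so $f\om^k$ is a locally finite measure. Let $h_0\in\PSH(\B)\cap L^\infty(\B)$ be the maximal \psh\ function with $\varlimsup_{z\to\xi}h_0(z)\le\varphi(\xi)$ for all $\xi\in\partial\B$ (it is bounded, being squeezed between $\inf_{\partial\B}\varphi$ and $\sup_{\partial\B}\varphi$). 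Set
$$\mathcal F:=\big\{v\in\PSH(\B)\cap L^\infty(\B)\,:\,(dd^cv)^k\ge f\om^k,\ \varlimsup_{z\to\xi}v(z)\le\varphi(\xi)\ \forall\xi\in\partial\B\big\},\qquad \tilde u:=\big(\sup_{v\in\mathcal F}v\big)^*.$$
Then $u\in\mathcal F$, so $\mathcal F\neq\emptyset$ and $\tilde u\ge u$; moreover $v\le h_0$ for every $v\in\mathcal F$ by the maximum principle, hence $u\le\tilde u\le h_0$, so $\tilde u\in L^\infty(\B)$, and $\tilde u\in\PSH(\B)$ by the classical theory of upper envelopes on domains in $\C^k$.

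To get $(dd^c\tilde u)^k\ge f\om^k$ I would invoke Choquet's lemma to choose a countable $\{v_j\}\subset\mathcal F$ with $(\sup_jv_j)^*=\tilde u$ and replace it by the increasing sequence $w_j:=\max(v_1,\dots,v_j)$; each $w_j$ is again in $\mathcal F$, since on $\{v\ge w\}$ one has $\max(v,w)=v$ and on $\{v<w\}$ one has $\max(v,w)=w$, whence $(dd^c\max(v,w))^k\ge f\om^k$. Continuity of $(dd^c\cdot)^k$ along increasing locally uniformly bounded sequences of \psh\ functions then gives $(dd^c\tilde u)^k=\lim_j(dd^cw_j)^k\ge f\om^k$.

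For the reverse inequality, suppose it fails, so the positive measure $\nu:=(dd^c\tilde u)^k-f\om^k$ is nonzero; choose a closed ball $B_0\subset\subset\B$ with $(dd^c\tilde u)^k(B_0)>\int_{B_0}f\om^k$ and $(dd^c\tilde u)^k(\partial B_0)=0$. On $B_0$ the function $\tilde u$ is a bounded \psh\ subsolution of $(dd^c\cdot)^k=f\om^k$ with upper semicontinuous boundary data $\tilde u|_{\partial B_0}$; by the classical solvability of the Dirichlet problem for the Monge--Amp\`ere operator on the ball $B_0$ with bounded density (see e.g. [BT2]), there is $h\in\PSH(B_0)\cap L^\infty(B_0)$ with $(dd^ch)^k=f\om^k$ on $B_0$, $\varlimsup_{z\to\zeta}h(z)\le\tilde u(\zeta)$ for $\zeta\in\partial B_0$, and $h\ge\tilde u$ on $B_0$ (since $\tilde u|_{B_0}$ is itself a competitor in the envelope defining $h$). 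If $h\equiv\tilde u$ on $B_0$ the Monge--Amp\`ere masses on $B_0$ would coincide, contradicting the choice of $B_0$; hence $h(z_1)>\tilde u(z_1)$ at some interior point $z_1$. By Lemma \ref{gluing} the function $\tilde u'$ equal to $h$ on $\mathrm{int}\,B_0$ and to $\tilde u$ on $\B\setminus B_0$ is \psh\ on $\B$; it lies in $L^\infty(\B)$, satisfies $(dd^c\tilde u')^k\ge f\om^k$ (it equals $f\om^k$ on $\mathrm{int}\,B_0$, equals $(dd^c\tilde u)^k$ off $B_0$, and $f\om^k$ charges no mass to $\partial B_0$), and agrees with $\tilde u$ near $\partial\B$, so $\tilde u'\in\mathcal F$. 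But $\tilde u'(z_1)>\tilde u(z_1)$ while $\tilde u'\le(\sup_{v\in\mathcal F}v)^*=\tilde u$, a contradiction. Therefore $(dd^c\tilde u)^k=f\om^k$.

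It remains to match the boundary values. From $\tilde u\le h_0$ we obtain $\varlimsup_{z\to\xi}\tilde u(z)\le\varphi(\xi)$ for every $\xi\in\partial\B$, and from $\tilde u\ge u$ we obtain $\varliminf_{z\to\xi}\tilde u(z)\ge\varliminf_{z\to\xi}u(z)$; to upgrade the latter to $\ge\varphi(\xi)=u^*(\xi)$, I would use that every boundary point of the strictly pseudoconvex domain $\B$ is regular: given $\xi_0\in\partial\B$ and $c<\varphi(\xi_0)$, one builds a member of $\mathcal F$ which is $\ge c$ on a neighbourhood of $\xi_0$ in $\B$ — combining $u$ with a local \psh\ subsolution peaking at $\xi_0$ — so that $\tilde u\ge c$ near $\xi_0$, and then lets $c\uparrow\varphi(\xi_0)$. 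This yields $\lim_{z\to\xi}\tilde u(z)=u^*(\xi)$ for all $\xi\in\partial\B$ and finishes the proof. I expect the genuinely delicate points to be the reverse Monge--Amp\`ere inequality — which hinges on the local Dirichlet problem with only an upper semicontinuous density and on the legitimacy of the gluing — and the sharp lower bound on the boundary values, which is subtle precisely because $u$ is assumed merely bounded rather than continuous up to $\ov\B$, so that a priori $\varliminf_{z\to\xi}u(z)$ can be strictly smaller than $u^*(\xi)$.
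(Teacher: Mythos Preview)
Your approach differs from the paper's. The paper proceeds by approximation: choose continuous $\varphi_j\downarrow u^*|_{\partial\B}$ and continuous $f_j\downarrow f$ (both exist since the targets are usc), solve the \emph{continuous} Dirichlet problems on $\B$ via Theorem~4.1 in [B\l] (rooted in [BT1]) to obtain $u_j\in\PSH(\B)\cap\CC(\ov\B)$ with $(dd^cu_j)^k=f_j\om^k$ and $u_j|_{\partial\B}=\varphi_j$, argue by the comparison principle that $u_j$ is decreasing and $u_j\ge u$, and set $\tilde u:=\lim_j u_j$; weak$^*$-continuity of $(dd^c\cdot)^k$ along decreasing sequences then gives $(dd^c\tilde u)^k=f\om^k$. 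This reduces the usc case to the known continuous one in a single stroke and sidesteps both the balayage and the barrier issues you run into.

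Your Perron--Bremermann route has two genuine gaps. First, the balayage step is essentially circular: to obtain $(dd^c\tilde u)^k\le f\om^k$ you invoke solvability of the Dirichlet problem on $B_0$ with the \emph{upper-semicontinuous} density $f$ and merely usc boundary data $\tilde u|_{\partial B_0}$, but [BT1], [BT2] and [B\l] only deliver this for continuous data; extending to usc $f$ is precisely the content of the present lemma, and any honest fix (approximate $f$ by continuous $f_j$ on $B_0$ and pass to the limit) is the paper's argument transplanted to $B_0$. Second---and as you yourself flag---the lower boundary estimate $\varliminf_{z\to\xi}\tilde u(z)\ge u^*(\xi)$ is not established: your barrier sketch fails because $u^*(\xi_0)=\varlimsup_{z\to\xi_0}u(z)$ can strictly exceed $\varliminf_{z\to\xi_0}u(z)$, so there is no reason any member of $\mathcal F$ is $\ge c$ near $\xi_0$ for $c$ close to $u^*(\xi_0)$. (In fact the paper's own argument, from $\tilde u\ge u$ and $\tilde u^*|_{\partial\B}\le u^*$, only yields $\varlimsup_{z\to\xi}\tilde u(z)=u^*(\xi)$, which is exactly what Lemma~\ref{gluing} needs in the subsequent applications.) A smaller point: your one-line justification that $\mathcal F$ is stable under $\max$ is not a proof, since the sets $\{v\ge w\}$ and $\{v<w\}$ are not open; the required inequality $(dd^c\max(v,w))^k\ge\chi_{\{v\ge w\}}(dd^cv)^k+\chi_{\{v<w\}}(dd^cw)^k$ is true but needs an actual argument.
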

\begin{proof}
Let $\{\va_j\} \in \CC (\partial \B)$ and $\{f_j\} \in \CC(\B), f_j \ge 0$ be  sequences that decreases to
$u^*|_{\partial \B}$ and $f$ respectively.
For each $j,$ by Theorem 4.1 in [B\l] (which is rooted in Theorem 8.1 in [BT]) we can find
$u_j \in PSH(\B) \cap \CC(\ov \B)$ such that
$$(dd^c u_j)^k=f_j\om^k, u_j|_{\partial \B}=\va_j.$$
By the comparison principle and the assumption we see that $u_j$ is decreasing on $\B$ and $u_j \ge u$ for every $j.$
Set $\tilde u:=\lim\limits_{j\to \infty} u_j.$ It is then clear that $\tilde u\in PSH(\B)\cap L^\infty(\B)$ and satisfies
$$\tilde u \ge u, {\tilde u}^*|_{\partial \B} \le \lim\limits_{j \to \infty} \va_j=u^*.$$
This implies that $\lim\limits_{z \to \xi} \tilde u(z)=u^*(\xi), \forall \xi \in \partial \B.$
Finally, by the weak$^*-$convergent of Monge-Amp\`ere measures, $(dd^c \tilde u)^k=f\om^k$ on $\B.$
\end{proof}
\begin{proof}
(a) From (\ref{extremal}) we infer that $u \ge \rho$ on $V'$ and $u \le \rho$ on $\tilde E.$
Hence $u=u^*=\rho$ on $\tilde E.$ Since $\lim\limits_{z \to \partial V}\rho (z)=0$ we also have
$$\lim\limits_{z \to \partial V, z \in V} u^*(z)=0.$$
Observe that $V'_{\red}$ is {\it disjoint} from $\tilde E,$ so by Proposition \ref{regular} we get
$u^* \in PSH(V') \cap L^\infty (V')$
and $u^*\big\vert_V\in PSH^{-} (V).$ It follows that $u=u^*$ on $V.$ This implies the desired conclusion.

\noindent
(b) Fix $z_0 \in V'_{\reg} \setminus \ov{\tilde E}$.
Then we can choose a small \nhd\ $W$ of $z_0$ in $V$ such that $W$ is biholomorphic to
some ball in $\mathbb C^k.$ By applying Lemma \ref{local} we can find $\tilde u \in PSH(W)$
such that $(dd^c \tilde u)^k=0, \tilde u \ge u$ on $W$ and $\lim\limits_{z \to \xi} \tilde u(z)=u(\xi)$ for every
$\xi \in \partial W.$
Set $\hat u:=u$ on $V' \setminus W$ and $\hat u:= \tilde u$ on $W.$ By Lemma \ref{gluing} we see that
$\hat u \in PSH(V')$. Further, $\hat u \le u$ on $V'$. This implies that $u=\tilde u$ on $W.$ In particular,
$(dd^c u)^k=0$ on $W$. Since $z_0$ is arbitrary in the complement of $V_{\sig}$
and since this measure does not charge $V_{\sig}$ we conclude that it vanishes off $\ov{\tilde E}.$

\noindent
(c) We note that the {\it restriction} of $(dd^c u)^k$ on $V$ is supported in $V \cap \ov{\tilde E}$ which is {\it relatively compact} in $V'.$
Hence, by the Chern-Levine-Nirenberg inequality and boundedness of $u$ we complete the proof of (c).

Finally, let $M>0$ be so large such that $\psi(z):=\Vert z\Vert^2-M<0$ on $\ov{V}.$ Then for $\la>0$ large enough we have $\la u^*<\psi$ on $U.$ Thus the function
$\rho':=\max \{\la u, \psi\} \in PSH^{-} (V) \cap L^\infty (V)$
and satisfies
$\lim\limits_{z \to \partial V} \rho'(z)=0, \rho'=\psi$ on $U$. Moreover, by the comparison principle (Corollary
\ref{corBe}) we obtain
$$\int\limits_V (dd^c \rho')^k \le \la^k \int\limits_V (dd^c u)^k <\infty.$$
Thus $\rho'$ is the desired function.
\end{proof}
The next result summarizes properties of the relative extremal function $u_K$ where $K$ is a compact subset of $V.$
In particular, this gives us a sufficient condition on $V$ so that
there exists a continuous element in $\mathcal E_0 (V)$ whose Monge-Amp\`ere measure has compact support in $V$.
\begin{lemma}\label{pro2}
Let $K$ be a compact subset of $V$. Then the following statements hold true:

\noindent
(a) $u^*_K \in \PSH (V_{\reg})$ and $(dd^c u^*_K)^k=0$ on $V_{\reg} \setminus K.$

\noindent
(b) $u_K$ is lower semicontinuous on $V$ and satisfies
$\lim\limits_{z \to \partial V} u_{K}(z)=0.$

\noindent
(c) Suppose that $u^*_K \in PSH^{-} (V).$ Then $u^*_K \in \E_0 (V)$ and $(dd^c u^*_K)^k$ is supported on $K.$
Moreover, if $u^*_{K}=-1$ on $K$ then $u^*_{K}$ is continuous on $V.$

\noindent
(d) If $V$ is mildly singular then
$$C(K)=\int\limits_{K \cap V_{\reg}}(dd^c u^*_K)^k.$$
\noindent
(e) If $V$ is very mildly singular, $K$ is non-pluripolar and if $V_{\red} \subset K^0,$ the interior of $K$ relative in $V$ then $u^*_{K} \in \PSH^{-}(V).$ In particular $u^*_{K} \in \PSH^{-}(V)$ if $V$ is locally irreducible.
\end{lemma}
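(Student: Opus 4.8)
\noindent
The plan is to treat Lemma~\ref{pro2} as the complex--variety counterpart of the classical properties of the relative extremal function, running each classical argument on the complex manifold $V_\reg$ and exploiting that $V_\sig$ carries neither capacity (Lemma~\ref{outer}) nor Monge--Amp\`ere mass (see (\ref{monge})). I would first record the harmless normalisation of competitors: with $m:=-\max_K\rho>0$ one has $\tfrac1m\rho\in\PSH^{-}(V)$ and $\tfrac1m\rho\le-1$ on $K$, so $\tfrac1m\rho\le u_K\le 0$; since the constant $-1$ is also a competitor, $-1\le u_K\le 0$, and replacing each competitor $v$ by $\max\{v,-1\}$ gives $u_K=\sup\{v\in\PSH(V):-1\le v\le 0,\ v|_K\equiv-1\}$. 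For (a), the competitors belong to $\PSH^{-}(V)$ and every point of $V_\reg$ is a point of local irreducibility, so Proposition~\ref{regular} yields that $u_K^{*}$ is psh near each point of $V_\reg$; thus $u_K^{*}|_{V_\reg}\in\PSH(V_\reg)$ and $-1\le u_K^{*}\le0$. For the vanishing of $(dd^c u_K^{*})^k$ off $K$, fix a coordinate ball $B\subset\subset V_\reg\setminus K$: Lemma~\ref{local} with $f\equiv0$ applied to $u_K^{*}|_B$ produces $\tilde u\in\PSH(B)\cap L^\infty(B)$ with $(dd^c\tilde u)^k=0$, $\tilde u\ge u_K^{*}$ on $B$, and $\lim_{z\to\xi}\tilde u(z)\le u_K^{*}(\xi)$ for $\xi\in\partial B$, while any competitor $v$ satisfies $\varlimsup_{B\ni\xi\to z}v(\xi)\le v(z)\le u_K(z)\le u_K^{*}(z)$ for $z\in\partial B$, so Theorem~\ref{thmBe} on $B$ gives $v\le\tilde u$ there; taking the supremum over $v$ and passing to the upper regularisation gives $u_K^{*}\le\tilde u$ on $B$, hence $u_K^{*}=\tilde u$ and $(dd^c u_K^{*})^k=0$ on $B$. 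Since $B$ is arbitrary and the measure ignores $V_\sig$, (a) follows.

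For (b): $\lim_{z\to\partial V}u_K=0$ is immediate from $\tfrac1m\rho\le u_K\le0$. Lower semicontinuity is the variety counterpart of a standard property of the relative extremal function; it follows by the usual small--ball balayage, replacing a competitor, on a ball $B\subset\subset V\setminus K$, by the maximal solution supplied by Lemma~\ref{local} (with $f\equiv0$) and gluing back by Lemma~\ref{gluing}, which produces competitors exceeding any prescribed level $c<u_K(z_0)$ on a whole neighbourhood of $z_0$; when $\rho$ is continuous it is also immediate from Proposition~\ref{pro1}.

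For (c): assuming $u_K^{*}\in\PSH^{-}(V)$, it is bounded ($-1\le u_K^{*}\le0$), it tends to $0$ at $\partial V$ because $u_K^{*}\ge u_K$ and (b), and by (a) its Monge--Amp\`ere measure is carried by $K\cap V_\reg$, hence has finite total mass by the Chern--Levine--Nirenberg inequality and the boundedness of $u_K^{*}$; thus $u_K^{*}\in\E_0(V)$ and $(dd^c u_K^{*})^k$ is supported on $K$. If moreover $u_K^{*}=-1$ on $K$, then $u_K^{*}$ is itself a competitor, so $u_K^{*}\le u_K$, i.e. $u_K=u_K^{*}$; being simultaneously lower semicontinuous (by (b)) and upper semicontinuous (being psh), $u_K^{*}$ is continuous on $V$.

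For (e) and (d) one must pay for the reducible locus, and this is where Definition~\ref{singular} enters. For (e): $V_\red\subset V_\sig$ is finite, and $V_\red\subset K^{0}$ forces $u_K^{*}\equiv-1$ on $K^{0}$ (every competitor is $\le-1$ on the open set $K^{0}$, so $u_K\le-1$ there, so $u_K^{*}\le-1$ there, while always $u_K^{*}\ge-1$); in particular $u_K^{*}$ is constant near each point of $V_\red$. On $V\setminus V_\red$ the variety is locally irreducible, so Proposition~\ref{remove} extends $u_K^{*}|_{V_\reg}$ to a psh function on $V\setminus V_\red$, and across the finitely many points of $V_\red$, where the function is locally $\equiv-1$, the extension is trivial; a short $\varlimsup$ computation identifies the extended function with $u_K^{*}$ on all of $V$, and since $K$ is non--pluripolar $u_K^{*}\not\equiv0$, whence $u_K^{*}\in\PSH^{-}(V)$. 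If $V$ is locally irreducible then $V_\red=\emptyset$ and Proposition~\ref{remove} applies directly. For (d): this is the variety form of the Bedford--Taylor identity $C(K,V)=\int_K(dd^c u_K^{*})^{k}$. For a capacity competitor $u$ ($-1\le u<0$), comparing $u_K^{*}$ with $su-\de$ ($0<s<1$, $0<\de<1-s$) via Theorem~\ref{thmBe} on the relatively compact set $\{u_K^{*}<su-\de\}$, together with the fact that $(dd^c u)^k$ puts no mass on the pluripolar negligible set $\{u_K<u_K^{*}\}$ (off which $u_K^{*}=-1$ on $K$), gives $\int_K(dd^c u)^k\le s^{-k}\int_{K\cap V_\reg}(dd^c u_K^{*})^k$; letting $s\uparrow1$ yields $C(K)\le\int_{K\cap V_\reg}(dd^c u_K^{*})^k$, while the reverse inequality follows from using $(1-\ve)u_K^{*}$ as a competitor and letting $\ve\downarrow0$. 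The step I expect to be the main obstacle is exactly this last one: the comparison principle is being applied to $u_K^{*}$, which need not lie in $\PSH(V)$ when $V$ fails to be locally irreducible, so one must extend $u_K^{*}$ (together with $\rho$) across the singular locus inside the ambient variety $V'$ of Definition~\ref{singular} --- this is precisely the role of ``mildly singular'' --- and then reconcile the variety capacity $C(K,V)$ with the purely manifold integral $\int_{K\cap V_\reg}(dd^c u_K^{*})^k$ when $V_\sig$ disconnects $V_\reg$; making this rigorous is the delicate point.
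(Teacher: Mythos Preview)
Your treatment of (c) and (e) is essentially the paper's. Two points deserve comment.

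In (a) there is a logical slip: Lemma~\ref{local} already gives $\tilde u\ge u_K^{*}$ on $B$, and your competitor argument again yields $u_K^{*}\le\tilde u$ --- the \emph{same} inequality --- so equality does not follow. What you actually need is the Poisson modification you invoke in (b): for each competitor $v$, lift $v|_B$ to its maximal extension and glue back via Lemma~\ref{gluing}; the result is still a competitor, hence $\le u_K$. Running this along an increasing Choquet sequence with upper envelope $u_K^{*}$ and applying Bedford--Taylor monotone convergence on $B$ gives $(dd^c u_K^{*})^k=0$ there. The paper just cites ``the Poisson modification method of Proposition~\ref{e0}(b)''. For (b), your balayage sketch needs a coordinate ball around $z_0$, unavailable when $z_0\in V_\sig\setminus K$; the paper shows directly that $u_K$ equals the supremum over \emph{continuous} competitors (manifestly l.s.c.) by approximating an arbitrary competitor from above via Proposition~\ref{pro1} --- the route you mention parenthetically.

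For (d) you correctly isolate the obstacle, but the paper's resolution is not the one you anticipate. It does \emph{not} extend $u_K^{*}$ to the ambient variety $V'$; ``mildly singular'' is used only through Proposition~\ref{e0} to produce some $\rho\in\E_0(V)$ with $\rho|_K\le-1$. One then takes a Choquet sequence $u_j\uparrow$ with $(\sup u_j)^{*}=u_K^{*}$ and sets $u'_j:=\max\{u_j,\rho\}\in\E_0(V)$; these are bona fide elements of $\PSH^{-}(V)\cap L^\infty(V)$, so the comparison machinery applies to \emph{them} rather than to $u_K^{*}$. The inequality $\int_{K\cap V_\reg}(dd^c u_K^{*})^k\le C(K)$ follows from $\int_K(dd^c u'_j)^k\le C(K)$ and weak convergence of $(dd^c u'_j)^k$ to $(dd^c u_K^{*})^k$ on $V_\reg$. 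For the reverse, given a capacity competitor $u$ one sets $v_j:=\max\{u'_j,(1-2\ve)u-\ve\}$; then $v_j=u'_j$ outside a fixed compact $K_\ve\supset K$ while $v_j=(1-2\ve)u-\ve$ near $K$, so the equal--mass statement in Proposition~\ref{partformular1} gives $(1-2\ve)^k\int_K(dd^c u)^k\le\int_{K_\ve}(dd^c u'_j)^k$, and one finishes by weak convergence together with Lemma~\ref{outer}. The entire argument stays inside $V$; $V'$ never reappears once it has produced $\rho$.
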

\noindent
Lemma \ref{pro2} (c) was proved in Theorem 1.7 of [Wik1] under the additional assumption that $D$ is hyperconvex.
Note that our method is intrinsic (working directly on $V$) and so different from those in [Wik1].
\begin{proof}
(a) follows from the definition of $u_K$ and the Poisson modification method given in Proposition \ref{e0}(b).

\noindent
(b) First, we let
$$\tilde{u}(z):= \sup \{v(z): v \in \PSH^{-} (V) \cap  \mathcal C(V), v\big|_K \le -1\}.$$
Then $\tilde{u}$  is  lower semicontinuous   on  $V$. Hence it is enough  to show that $u_{K}=\tilde{u}$ on $V.$
It is clear that $\tilde{u} \le  u_{K}$. Fix  $v \in PSH(V), v \le -1$ on  $K$ and  $v \le 0$ on  $V$.
Then by Proposition \ref{pro1} there exists a sequence  $v_j  \in  PSH^-(V) \cap \mathcal C(\ov{V})$ with
$v_j|_{\partial  V} = 0$ such that $v_j \downarrow \tilde v:= \max\{v, -1\}$ on  $V$.
Put  $M_j:= \max\limits_K v_j$. Then by Lemma \ref{lm3}, $M_j \downarrow \max\limits_K  \tilde v   = -1$.
Next we set
$\tilde{v}_j:= v_j-(M_j +1)$. Then $\tilde{v}_j \in \PSH^-(V) \cap  \mathcal C(V), \tilde{v}_j< 0$  and
$$\tilde{v}_j\big|_K=(v_j\big|_K-M_j)-1 \le -1.$$
Hence  $v-(M_j +1) \le \tilde{v}_j \le \tilde{u}.$
By letting  $j\to \infty$ we conclude that $v\le \tilde{u}$ on  $V$. Since $v$ is arbitrary we obtain
$u_{K} \le \tilde{u}$ on $V$.
Finally, we choose  $M>0$ so large that  $M\rho \le -1$ on $K$. Then by the construction of $u_{K}$ we have
$M\rho \le u_{K}$. This implies that $\lim\limits_{z \to \partial V} u_{K}(z) = 0$.

\noindent
(c) Obviously $u^*_K \ge -1$ on $V.$ Further, by (a) we have $\lim\limits_{z \to \partial V} u^*_{K}(z)=0.$
Further, since $(dd^c u^*_K)^k$ does not charge $V_{\sig}$ we have $(dd^c u^*_{K})^k=0$ on
$V \setminus K.$ Thus $u^*_K \in \E_0 (V)$. Finally, assume that $u^*_{K}=-1$ on $K$. Then, since $u^*_K \in \PSH^{-}(V)$ we infer from the definition of $u_{K}$ that $u^*_{K}=u_{K}$ on $V$.
In particular  $u_{K}$ is upper semicontinuous on $V.$ This fact, combined with (a), implies continuity of
$u_{K}$ on $V.$

\noindent
(d) Choose $\rho \in \E_0 (V)$ such that $\rho|_K \le -1.$ By Choquet's lemma, we can find a sequence
$\{u_j\} \subset \PSH^{-} (V), u_j|_K \le -1$ and $u_j \uparrow u, u^*=u^*_K$ on $V.$
Set $u'_j:=\max \{u_j, \rho\}.$ Then $u'_j \uparrow u$ on $V$ as well.
Fix $\ve>0$. Since the sequence $\{u'_j\}$ is uniformly bounded, there exists an open \nhd\ $U_\ve$ of $K$ such that
$$\int\limits_{U_\ve} (dd^c u'_j)^k \le \int\limits_{K} (dd^c u'_j)^k \le C(K)+\ve.$$
Since $(dd^c u'_j)^k$ converges to $(dd^c u^*_K)^k$ weakly on $V_\reg$ we obtain
$$\varliminf\limits_{j \to \infty}\int\limits_{U_\ve \cap V_\reg} (dd^c u'_j)^k \ge
\int\limits_{U_\ve \cap V_\reg} (dd^c u^*_K)^k \ge \int\limits_{K \cap V_\reg} (dd^c u^*_K)^k.$$
Combining the two last estimates and letting $\ve \downarrow 0$ we arrive at
$$\int\limits_{K \cap V_\reg} (dd^c u^*_K)^k \le C(K).$$
For the reverse direction, we will modify slightly the original argument in [BT1] (for the case of open domains in
$\mathbb C^n$). More precisely, fix $\ve>0$ and $u \in \PSH^{-} (V)$ with $u \ge -1.$
For each $j$, we define
$$v_j: =\max \{u'_j, (1-2\ve)u-\ve\}.$$
It is clear that $v_j=u'_j$ outside a {\it fixed} compact subset $K \subset K_\ve \subset V$ whereas
$v_j=(1-2\ve)u-\ve$ on
some \nhd\ of $K$. It follows, using Proposition \ref{partformular1}, that
$$(1-2\ve)^k \int\limits_K (dd^c u)^k=\int\limits_K (dd^c v_j)^k \le \int\limits_{K_\ve} (dd^c v_j)^k
=\int\limits_{K_\ve} (dd^c u'_j)^k.$$
By Lemma \ref{outer}, we may choose an open \nhd\ $O_\ve$ of $V_\sig$ such that $C(O_\ve)<\ve.$
Using again weak convergence of $(dd^c u'_j)^k$ towards $(dd^c u^*_K)^k$ on $V_\reg$ as $j \to \infty$ we obtain
$$(1-2\ve)^k \int\limits_K (dd^c u)^k \le \int\limits_{K_\ve \setminus O_\ve} (dd^c u^*_K)^k+\ve \le
\int\limits_{K \cap V_\reg} (dd^c u^*_K)^k+\ve.$$
By letting $\ve \downarrow 0$ we obtain the desired conclusion.

\noindent
(e) Fix $z_0 \in V \setminus K^0.$ By the assumption
$z_0 \not\in V_{\red}.$ Therefore $u^*_{K}$ is \psh\ on a small \nhd\ of $z_0.$ Observe also that
$u^*_K=u_K=-1$ on $K^0$. Thus $u^*_{K} \in PSH (V)$.
Moreover, if $u^*_K=0$ somewhere on $V$ then by the maximum principle we would have $u^*_K=0$ entirely on $V.$
Hence $u_K$ must vanishes somewhere on $V_\reg.$ Then from the definition of $u_K$ we may construct
$v \in \PSH^{-} (V)$ such that $v|_E=-\infty.$
\end{proof}
\begin{remark} \label{negative}
Assume that $V$ is very mildly singular.
Let $\va$ be a $\mathcal C^\infty-$smooth strictly \psh\ exhaustion function for $V$. Then we can find $a>0$ such that $K:=\{z \in V: \va(z) \le a\}$ is a compact regular subset of $V$ that contains $V_{\red}.$ So by the above result
$u_{K} \in \E_0 (V)$ is a negative {\it continuous} \psh\ exhaustion function for $V$.
\end{remark}
We now combine the above lemma with estimates in Lemma \ref{energy}
to provide upper bounds for Monge-Amp\`ere measures in terms of capacity.
\begin{lemma} \label{Holder2}
Let $V$ be a mildly singular hyperconvex variety and
$K$ be a compact subset of $V.$
Then for $u_1, ..., u_k \in \E_0(V)$ we have
$$\int\limits_V(-u_{K})T\le MC(K)^{\fr1{k+1}}.$$
\noindent
where $T:= dd^cu_1\wedge \cdots\wedge  dd^cu_k$ and
$M:= \Big(\int\limits_V (-u_1)(dd^cu_1)^k\Big)^{\frac1{k+1}} \cdots
(\int\limits_V (-u_k)(dd^cu_k)^k\Big)^{\frac1{k+1}}.$

Further, for every relatively compact Borel subset $E$ of $V$ we have
$\int\limits_E T \le MC(E)^{\frac1{k+1}}.$
\end{lemma}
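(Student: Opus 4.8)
The plan is to combine the energy estimate in Lemma \ref{energy}(c) with the two ingredients from Lemma \ref{pro2}: that the relative extremal function $u_K^*$ is (under mild singularity) an element of $\E_0(V)$ and that $C(K)=\int_{K\cap V_\reg}(dd^c u_K^*)^k$. So the first step would be to reduce the general $T=dd^c u_1\wedge\cdots\wedge dd^c u_k$ estimate to the special case $T=(dd^c u_i)^k$ for a single $i$. Apply Lemma \ref{energy}(c) with $u_0:=u_K^*$ (which lies in $\E_0(V)$ by Lemma \ref{pro2}(c),(e), since $u_K^*\ge -1$ and we may assume $K$ is non-pluripolar with $V_\red\subset K^0$, or else the right-hand side is $+\infty$ or the inequality is trivial) to get
$$\int_V(-u_K^*)\,T\le\prod_{i=1}^k\Big(\int_V(-u_K^*)(dd^c u_i)^k\Big)^{1/k}.$$

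The second step is to bound each factor $\int_V(-u_K^*)(dd^c u_i)^k$. Since $-u_K^*\le 1$ everywhere on $V$ and since, by Lemma \ref{pro2}(c), $(dd^c u_K^*)^k$ is supported on $K$ while $0\le -u_K^*\le 1$ with $-u_K^*=1$ on (most of) $K$... actually the cleaner route is to apply Lemma \ref{energy}(a) or (c) again with the roles swapped, writing $\int_V(-u_K^*)(dd^c u_i)^k$ and using the symmetry-type estimate of Lemma \ref{energy}(c) with $u_0$ replaced by $u_i$ and one of the other slots filled by $u_K^*$; this produces a factor $\int_V(-u_i)(dd^c u_K^*)^k$, which is at most $\int_K(dd^c u_K^*)^k=C(K)$ by Lemma \ref{pro2}(d) together with $-u_i\le$ constant — but $u_i$ need not be bounded by $1$. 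To handle this, I would first normalize: replace $u_i$ by $u_i/\|u_i\|_{L^\infty}$... but that changes the masses. The honest bookkeeping: by Lemma \ref{energy}(c), $\int_V(-u_K^*)(dd^c u_i)^k\le\big(\int_V(-u_K^*)(dd^c u_i)^k\big)^{?}$ is circular, so instead use Lemma \ref{energy}(b): $\int_V(-u_K^*)(dd^c u_i)^k\le\big(\int_V(-u_K^*)(dd^c u_K^*)^k\big)^{1/(k+1)}\big(\int_V(-u_i)(dd^c u_i)^k\big)^{k/(k+1)}$, and $\int_V(-u_K^*)(dd^c u_K^*)^k\le\int_K(dd^c u_K^*)^k=C(K)$ by Lemma \ref{pro2}(d) and $-u_K^*\le 1$. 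Feeding this back into the product over $i$ and raising to the appropriate powers yields exactly $\int_V(-u_K^*)\,T\le M\,C(K)^{1/(k+1)}$ with $M$ as stated; then $\int_V(-u_K)T=\int_V(-u_K^*)T$ since $u_K=u_K^*$ a.e. with respect to the measure $T$ (as $\{u_K<u_K^*\}$ is pluripolar, hence $T$-null, because $T$ is a Monge-Amp\`ere product of bounded \psh\ functions).

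The third step extends from compact $K$ to an arbitrary relatively compact Borel set $E\subset V$. Here I would use the outer regularity of capacity together with $-u_K\ge \mathbf{1}_{K}$... more precisely, for a compact $K$ one has $\int_E T\le\int_V(-u_K)T$ whenever $-u_K\ge 1$ on $E$, i.e. $E\subset K$, which combined with Step 2 gives $\int_E T\le M\,C(E)^{1/(k+1)}$ when $E$ is compact (taking $K=E$ if $E$ is regular, or approximating). For general relatively compact Borel $E$, inner-regularity of the measure $T$ from below by compacts $K_j\uparrow$ with $\int_{K_j}T\to\int_E T$, together with monotonicity $C(K_j)\le C(E)$ of the relative capacity (or its outer version), finishes the argument by passing to the limit.

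The main obstacle I anticipate is the non-boundedness of the $u_i$ by $1$: the relative capacity is defined using contenders normalized to $[-1,0)$, whereas the $u_i\in\E_0(V)$ carry their own $L^\infty$ norms, so the homogeneity degrees must be tracked carefully — this is precisely why the estimate is stated with the weighted energies $\int_V(-u_i)(dd^c u_i)^k$ rather than with sup-norms, and why Lemma \ref{energy}(b) (the $(k+1)$-term inequality) rather than the bare Cauchy–Schwarz of part (a) is the right tool. A secondary subtlety is justifying $\int_V(-u_K)T=\int_V(-u_K^*)T$ and the weak-convergence steps on $V_\reg$, but these are routine given quasi-continuity and the fact that all currents involved put no mass on $V_\sig$.
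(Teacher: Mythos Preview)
Your overall strategy---applying the energy estimate of Lemma~\ref{energy}(b) with $u_0$ taken to be the relative extremal function, and then bounding $\int_V(-u_K^*)(dd^cu_K^*)^k\le C(K)$ via Lemma~\ref{pro2}(d)---is exactly the paper's strategy, and the final step (passing from compact $K$ to relatively compact Borel $E$ by inner regularity of $T$) matches the paper as well. Incidentally, your two-step route (first Lemma~\ref{energy}(c), then (b) on each factor) can be collapsed: applying (b) once with $u_0=u_K^*$ already gives the desired bound directly.

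However, there is a genuine gap at the very first step. You invoke Lemma~\ref{pro2}(c),(e) to place $u_K^*$ in $\E_0(V)$, but Lemma~\ref{pro2}(e) requires $V$ to be \emph{very} mildly singular and $V_{\red}\subset K^0$, whereas the present lemma only assumes $V$ is \emph{mildly} singular and $K$ is an arbitrary compact. Your parenthetical ``we may assume $K$ is non-pluripolar with $V_{\red}\subset K^0$, or else \dots'' does not provide a valid reduction: when $K$ is non-pluripolar but $V_{\red}\not\subset K^0$ (which is the generic situation), neither is the right-hand side infinite nor the inequality trivial, and no argument is given to enlarge $K$ or otherwise reduce to the favorable case. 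Without $u_K^*\in\PSH^-(V)$, Lemma~\ref{energy} simply does not apply to it.

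The paper circumvents this by never assuming $u_K^*$ is globally plurisubharmonic. Instead it takes (via Choquet's lemma) an increasing sequence $v_j\in\E_0(V)$ with $v_j|_K\le -1$, $v_j\ge\rho$ for a fixed $\rho\in\E_0(V)$, and $(\sup_j v_j)^*=u_K^*$. Lemma~\ref{energy}(b) is applied to each $v_j$, giving $\int_V(-v_j)T\le M\big(\int_V(-v_j)(dd^cv_j)^k\big)^{1/(k+1)}$; the key remaining work is to show $\varlimsup_j\int_V(-v_j)(dd^cv_j)^k\le C(K)$, which the paper does by splitting $V$ along sublevel sets $\{\rho<-\ve\}$, using weak convergence of $(dd^cv_j)^k$ to $(dd^cu_K^*)^k$ on $V_{\reg}$ together with Lemma~\ref{pro2}(d), and letting $\ve\downarrow 0$. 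Monotone convergence in $j$ then yields the bound for $\int_V(-u_K)T$ (the passage from $u$ to $u_K$ uses, as you note, that $\{u_K<u_K^*\}$ is pluripolar and $T$-null). This approximation step is precisely what your proposal is missing.
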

\begin{proof}
Fix $\rho \in \E_0 (V).$ As in the proof of Lemma \ref{pro2}, we can find a sequence $\{v_j\} \subset \E_0 (V)$
such that $v_j|_K \le -1, v_j \ge \rho, v_j \uparrow u$ and $u^*=u^*_K$ on $V.$
Then $v_j \in \E_0 (V)$ and
$$\sup\limits_{j} \int\limits_V (dd^c v_j)^k \le \int\limits_V (dd^c \rho)^k<\infty.$$
Using Lemma \ref{energy}(c) we get
$$\int\limits_V (-v_j)T\le M\Big(\int_V (-v_j)(dd^c v_j)^k\Big)^{\frac1{k+1}}.$$
For each $\ve>0$, we let $K_\ve$ be the closure of the open set $\{\rho<-\ve\}.$ Then $K_\ve$ is a compact subset of
$V.$
We have
$$\begin{aligned}
\int\limits_V(-v_j)(dd^c v_j)^k &=\int\limits_{K_\ve} (-v_j)(dd^c v_j)^k+\int\limits_{V \setminus K_\ve} (-v_j)(dd^c v_j)^k\\
& \le \int\limits_{K_\ve}(dd^c v_j)^k+\sup\limits_{V \setminus K_\ve}(-\rho)\int\limits_{V \setminus K_\ve} (dd^c v_j)^k\\
& \le \int\limits_{K_\ve}(dd^c v_j)^k+\sup\limits_{V \setminus K_\ve}(-\rho)\int\limits_{V \setminus K_\ve} (dd^c \rho)^k.
\end{aligned}$$
Since $(dd^c v_j)^k$ converges weakly to $(dd^c u^*_K)^k$ on $V_\reg$, from Lemma \ref{outer} we infer
$$\varlimsup\limits_{j \to \infty} \int\limits_{K_\ve} (dd^c v_j)^k \le \int\limits_{K_\ve \cap V_\reg}
(dd^c u^*_K)^k=\int\limits_{K \cap V_\reg} (dd^c u^*_K)^k \le C(K),$$
where the last estimate follows from Lemma \ref{pro2} (d). Thus, using Lebesgue's monotone convergence theorem we get
$$\int\limits_V (-u)T \le M\Big [C(K)+\sup\limits_{V \setminus K_\ve}(-\rho)\int\limits_{V} (dd^c \rho)^k\Big])^{\frac1{k+1}}.$$
Letting $\ve \downarrow 0$, we get
$$\int\limits_V(-u)dd^cu_1\wedge \cdots\wedge dd^cu_k\le MC(K)^{\fr1{k+1}}.$$
By the solution to the second problem of Lelong on the smooth locus $V_{\reg}$ (cf. [BT1]),
we have $u=u_K$ on $V$ outside a pluripolar subset $S$ of $V$. Since $T$ does not charge $S$ we obtain the first assertion of the lemma.
Finally, given a Borel relatively compact subset $E$ of $V$, we let $\{K_j\}$ be an increasing sequence of compact subsets in $E$ such that
${\int\limits_{K_j} T} \uparrow {\int\limits_E T}.$
Hence
$$\int\limits_V(-u_{K_j})T\le MC(K_j)^{\fr1{k+1}} \le MC(E)^{\fr1{k+1}}.$$
Observe that for each $j,$ there exists a pluripolar subset $S_j$ of $V$ such that
$u_{K_j}=-1$ on $K_j \setminus S_j.$
Since $T$ does not charge $S_j$ we obtain
$$\int\limits_{K_j}T=\int\limits_{K_j}(-u_{K_j})T\le MC(E)^{\fr1{k+1}}.$$
By letting $j \uparrow \infty$ and using inner regularity of $T$ we complete
the proof of the lemma.
\end{proof}
Now we come to the main result of this section.
\begin{theorem}\label{approximation}
Suppose that $V$ is a mildly singular hyperconvex variety.
Then the following assertions hold true:

\noindent
(a) $\mathcal{E}_0(V)$ is non empty;

\noindent
(b) For every $u \in PSH^-(V)$ there exists a  sequence $\{u_j\}_{j\ge 1} \subset \mathcal{E}_0(V)$
such that $u_j \downarrow  u$ on  $V;$

\noindent
(c) If $V$ admits a continuous negative \psh\ exhaustion function
then $u_j$ in (b) can be chosen to be continuous on $\ov{V}$.
\end{theorem}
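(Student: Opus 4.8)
The plan is to deduce all three parts from the structural results already established in the section, chiefly Proposition \ref{e0}, Lemma \ref{pro2}, Proposition \ref{pro1}, and the energy/Hölder estimates of Lemma \ref{Holder2}. For (a), since $V$ is mildly singular we pick an open set $E \subset V$ with $\ov{V_{\red}} \cap V \subset E$ and $\ov E \cap \partial V = \ov{V_{\red}} \cap \partial V$ (such $E$ exists because $\ov{V_{\red}} \cap \partial V \subset V'$ by the definition of mildly singular, so one thickens $V_{\red}$ slightly inside $V'$ and intersects with $V$). Then the function $u$ produced in Proposition \ref{e0} restricts to an element of $\PSH^-(V)$ with zero boundary values and finite total Monge-Ampère mass by parts (a) and (c) of that proposition; hence $u|_V \in \mathcal E_0(V)$ and the class is non-empty. (When $V$ is locally irreducible or very mildly singular one may alternatively invoke Lemma \ref{pro2}(e) with a suitable compact regular $K$, as in Remark \ref{negative}, to get even a continuous element.)

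For (b), fix $u \in \PSH^-(V)$. Using a continuous exhaustion-type function is not available in general, so instead I would argue as follows. First fix $\psi_0 \in \mathcal E_0(V)$ from part (a). Since $V$ is Stein, Fornaess--Narasimhan approximation gives $\va_j \in \PSH(V) \cap \CC(V)$ with $\va_j \downarrow u$ on $V$. The natural candidate is $u_j := \max\{\va_j - c_j,\; N_j \psi_0\}$ for suitable constants $c_j \to 0$ and $N_j \to \infty$: on a large compact exhausting set one has $\va_j < $ small, while near $\partial V$ the term $N_j \psi_0$ dominates and forces zero boundary values, so $u_j \in \PSH^-(V) \cap L^\infty(V)$ with $\lim_{z\to\partial V} u_j(z)=0$. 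The point requiring care is the decreasing property and the limit: one chooses $c_j, N_j$ inductively so that $u_{j+1} \le u_j$ and $u_j \ge u$ everywhere, and so that $u_j \to u$ pointwise; this is a routine (though slightly delicate) bookkeeping argument, entirely parallel to the gluing construction in the proof of Proposition \ref{pro1}. Finally, finiteness of $\int_V (dd^c u_j)^k$ follows from Corollary \ref{corBe} together with $u_j \ge N_j\psi_0$ on the region where they may differ, or directly from the Chern--Levine--Nirenberg inequality since $(dd^c u_j)^k$ restricted to $V$ is dominated on a neighborhood of the support of $(dd^c(N_j\psi_0))^k$; in any case $u_j \in \mathcal E_0(V)$.

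For (c), assume $V$ carries a continuous negative \psh\ exhaustion function $\rho$. Then Proposition \ref{pro1} applies verbatim and produces $u_j \in \PSH^-(V)\cap\CC(\ov V)$ with $u_j|_{\partial V}=0$ and $u_j \downarrow u$ on $V$; it remains only to check $\int_V(dd^c u_j)^k < \infty$, i.e.\ that these $u_j \in \mathcal E_0(V)$. This follows because each $u_j$ is continuous on $\ov V$ with $u_j|_{\partial V}=0$ and $u_j \ge N\rho$ off a compact set for suitable $N$ (by continuity and vanishing at the boundary), so Corollary \ref{corBe} bounds $\int_V(dd^c u_j)^k$ by $N^k\int_V(dd^c\rho)^k$; the right-hand side is finite since $\rho \in \mathcal E_0(V)$ by Lemma \ref{pro2} (the compact sublevel sets $\{z:\va(z)\le a\}$ from Remark \ref{negative} being regular, or simply because $\rho$ bounded continuous with zero boundary values and one more application of Lemma \ref{pro2}(c) after replacing $\rho$ by a relative extremal function of a large compact set). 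The main obstacle throughout is part (b): unlike the domain case there is no convolution smoothing on $V$, so one cannot make the $u_j$ continuous without the extra hypothesis of (c), and the care needed is in the inductive choice of the cutoff constants so that monotonicity, the boundary behaviour, and convergence to $u$ all hold simultaneously while the max with a fixed $\mathcal E_0$-function keeps the Monge-Ampère mass finite.
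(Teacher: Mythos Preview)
Your part (a) matches the paper. Parts (b) and (c), however, miss the key simplification that makes the paper's proof short.

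For (b), you work too hard. Once you have any $\psi_0\in\mathcal E_0(V)$ from (a), the sequence $u_j:=\max\{u,\,j\psi_0\}$ already does the job: it lies in $\PSH^-(V)\cap L^\infty(V)$, decreases to $u$, has zero boundary values, and satisfies $u_j\ge j\psi_0$, so Corollary \ref{corBe} gives $\int_V(dd^cu_j)^k\le j^k\int_V(dd^c\psi_0)^k<\infty$. (The paper even remarks this right before Proposition \ref{pro1}.) Your Fornaess--Narasimhan/gluing route re-derives Proposition \ref{pro1}, but that proposition uses the \emph{continuity} of the exhaustion in an essential way (to define the sublevel sets $V_j$ and to check $\va_{l(j)}-1/j\le j\rho$ on $\partial V_j$); with a possibly discontinuous $\psi_0$ this ``parallel'' argument is not obviously routine.

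For (c) your argument has a real gap. You apply Proposition \ref{pro1} with the given continuous exhaustion $\rho$ and then try to bound $\int_V(dd^cu_j)^k$ by $N^k\int_V(dd^c\rho)^k$; but nothing in the hypothesis says $\int_V(dd^c\rho)^k<\infty$, so this step fails. The paper's fix---which you only mention parenthetically---is the actual point: one does \emph{not} use the given $\rho$ in Proposition \ref{pro1}, but replaces it by $\rho':=u_K$ for a regular compact $K\subset V$. By Lemma \ref{pro2}(c) and Remark \ref{negative} this $\rho'$ is a \emph{continuous} element of $\mathcal E_0(V)$, so running Proposition \ref{pro1} with $\rho'$ gives continuous $u_j$ with $u_j\ge j\rho'$, and now Corollary \ref{corBe} yields $\int_V(dd^cu_j)^k\le j^k\int_V(dd^c\rho')^k<\infty$. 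In other words, the paper handles (b) and (c) in one stroke by first securing a continuous exhaustion that is already in $\mathcal E_0(V)$; your separation of the two parts is what forces the detour and creates the gap.
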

\begin{proof}
(a) is a direct consequence of Proposition \ref{e0}.

\noindent
(b) Let $K$ be a regular compact subset of $V,$ then $\rho':=u_K \in \E_0 (V)$ is a continuous negative \psh\ exhaustion
function for $V$ (see Remark \ref{negative}).
By replacing  the exhaustion function $\rho$ in the proof of Proposition \ref{pro1} by $\rho'$,
we obtain a sequence $u_j \in \PSH^-(V)$ with $u_j|_{\partial  V} = 0$ and
$u_j \downarrow  u$ on $V$ and  $u_j \ge j\rho'$ on  $\ov V$. Further, by the comparison principle
(Corollary \ref{corBe}) we obtain
$$\int\limits_V (dd^cu_j)^k  \le j^k \int\limits_V (dd^c \rho')^k  <\infty,\ \forall  j \ge  1.$$
Thus $u_j  \in \mathcal{E}_0(V)$.

\noindent
(c) follows directly from the proof of Proposition \ref{pro1}.
\end{proof}
\section{The class $\mathcal E(V)$}
The goal of this section is to construct the largest possible class of negative \psh\ functions on a mildly singular complex variety $V$ for which the Monge-Amp\`ere operator is well defined. We start with the following small modification of Lemma in [Ce2] from which we may regard elements in $\mathcal E_0 (V)$ as test functions.
\begin{lemma}\label{E0}
Let $\varphi \in {\mathcal C}^\infty_0 (D)$ and $U$ be a relatively compact open subset of $V$ that contains
the support $K$ of $\varphi|_V.$
Then for every $\la>0$ there exist $u_1,u_2 \in \mathcal{E}_0(V) \cap  \mathcal{C}(V)$ such that:

\noindent
(a)$\varphi= u_1-u_2$ on $V;$

\noindent
(b) $dd^c u_1 \ge \la \omega, dd^c u_2 \ge \la \omega$ on $U.$
\end{lemma}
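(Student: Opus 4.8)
The plan is to write $\varphi$ as a difference of two continuous functions in $\mathcal E_0(V)$ by first absorbing $\varphi$ into a large smooth strictly plurisubharmonic bump on a neighborhood of $U$ and then correcting the boundary behavior with the relative extremal function supplied by Section 3. First I would fix, by Theorem \ref{approximation}(a) and Remark \ref{negative}, a regular compact set $K_1 \subset V$ with $U \subset\subset K_1^0$, so that $\psi_0:=u_{K_1}\in \mathcal E_0(V)\cap \mathcal C(V)$ is a continuous negative plurisubharmonic exhaustion of $V$ that equals $-1$ on $K_1$. The point of $\psi_0$ is that it vanishes at $\partial V$, has finite total Monge--Amp\`ere mass, and is identically $-1$ (hence locally constant) on a neighborhood of $\overline U$, so adding a multiple of it does not disturb anything over $U$.

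Next I would handle the local strict plurisubharmonicity. Since $\varphi\in\mathcal C^\infty_0(D)$, on a relatively compact neighborhood $W$ of $\overline U$ in $D$ there is a constant $C>0$ with $dd^c\varphi \ge -C\,dd^c\|z\|^2$ as $(1,1)$-forms on $W$; restricting to $V$ gives $dd^c(\varphi|_V)\ge -C\omega$ on $W\cap V$. Choose a cutoff $\chi\in\mathcal C^\infty_0(D)$ with $\chi\equiv 1$ near $\overline U$ and $\operatorname{supp}\chi\subset W$, and for $M>0$ large set
$$
g:=M\chi\cdot(\|z\|^2-R)\quad\text{on }D,
$$
where $R:=\sup\{\|z\|^2:z\in D\}$, so that $g\le 0$, $g$ is smooth, $g\equiv M(\|z\|^2-R)$ near $\overline U$ (hence $dd^c g=M\omega$ there), and $g\equiv 0$ off $W$. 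For $M$ large enough, $dd^c(g+\varphi)\ge (\la+1)\omega$ on $U$. Now $g|_V$ need not be plurisubharmonic where $\chi$ varies, but it is smooth with bounded $dd^c$, so there is $a>0$ with $dd^c(g|_V)+a\,dd^c\psi_1\ge 0$ on all of $V$ for a suitable fixed $\psi_1\in\mathcal E_0(V)\cap\mathcal C(V)$ that is strictly plurisubharmonic on $\operatorname{supp}\chi\cap V$; such $\psi_1$ is obtained from Proposition \ref{e0} (the ``Consequently'' clause) or from Remark \ref{negative}, arranged so that $\psi_1$ is locally $\equiv -1$, hence $dd^c\psi_1=0$, over $\overline U$. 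Set
$$
u_1:=g|_V+\varphi|_V+a\psi_1+b\psi_0,\qquad u_2:=g|_V+a\psi_1+b\psi_0,
$$
with $b>0$ to be chosen. Then $u_1-u_2=\varphi$ on $V$, giving (a).

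For (b): over $U$ we have $dd^c\psi_0=dd^c\psi_1=0$ and $dd^c(g|_V)=M\omega$, so $dd^c u_2=M\omega\ge\la\omega$ and $dd^c u_1=dd^c(g+\varphi)+M\omega\ge\la\omega$, provided $M\ge\la$. Away from $U$ the strict plurisubharmonicity is not required. It remains to check $u_1,u_2\in\mathcal E_0(V)\cap\mathcal C(V)$, and this is where the real work sits. Both functions are continuous on $\overline V$ and vanish on $\partial V$: indeed $g|_V$ is smooth and compactly supported, $\varphi|_V$ likewise, while $a\psi_1+b\psi_0\to 0$ at $\partial V$. Plurisubharmonicity of $u_2$ (and then of $u_1=u_2+\varphi|_V$, since by the choice of $M$ the form $dd^c(\varphi|_V)$ is dominated off $U$ too after enlarging $a$) follows because $dd^c(g|_V)$ is a smooth form bounded below by $-c\omega$ for some $c>0$ and $dd^c\psi_1\ge \omega$ on the support of $g$: choosing $a\ge c$ kills the negative part there, and off $\operatorname{supp} g$ the function is just $a\psi_1+b\psi_0\in\PSH^-(V)$. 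Negativity is arranged by taking $b$ large: on the fixed compact set $\operatorname{supp}\chi\cap V$ the continuous function $g|_V+\varphi|_V+a\psi_1$ is bounded above, and $\psi_0\le -\kappa<0$ there, so $b\kappa$ dominates; off that set $u_2=u_1=a\psi_1+b\psi_0<0$. Finally the finite-mass condition: by the comparison principle in the form of Corollary \ref{corBe}, or directly by the Chern--Levine--Nirenberg inequality together with $u_i=a\psi_1+b\psi_0$ outside a fixed compact set, we get $\int_V(dd^c u_i)^k<\infty$ since $\psi_0,\psi_1\in\mathcal E_0(V)$ and the smooth pieces $g,\varphi$ contribute only a bounded smooth form over a compact set.

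The main obstacle I anticipate is not any single estimate but the bookkeeping needed to make plurisubharmonicity and negativity hold \emph{globally} on $V$ while strict positivity of $dd^c u_i$ holds on $U$: the smooth functions $g$ and $\varphi$ are not plurisubharmonic, so one must carefully choose the auxiliary exhaustion $\psi_1$ to be genuinely strictly plurisubharmonic exactly on the (fixed, relatively compact) region where $g$ and $\varphi$ are supported, yet identically locally constant over $\overline U$ so as not to spoil (b); arranging a single $\psi_1$ with both properties, and then fixing the constants $a$ (large enough to convexify) and $b$ (large enough to make everything negative without destroying $dd^c u_i\ge\la\omega$ on $U$, which is automatic since $dd^c\psi_0=dd^c\psi_1=0$ there), is the delicate part. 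Everything else reduces to the already-established facts that $\mathcal E_0(V)$ is closed under taking maxima with $\PSH^-\cap L^\infty$ exhaustions and that $\mathcal E_0(V)\cap\mathcal C(V)$ contains continuous exhaustion functions (Lemma \ref{pro2}(c), Remark \ref{negative}, Theorem \ref{approximation}).
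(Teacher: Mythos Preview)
Your additive decomposition can be made to work, but as written it contains a self-contradictory requirement that you flag as ``the delicate part'' without resolving: you ask for $\psi_1\in\mathcal E_0(V)\cap\mathcal C(V)$ that is strictly plurisubharmonic on $\operatorname{supp}\chi\cap V$ \emph{and} identically $-1$ on $\overline U\subset\operatorname{supp}\chi$. These cannot both hold at points of $\overline U$. The good news is that the second condition is unnecessary: on $U$ you already have $dd^c(g|_V)=M\omega$ and $dd^c\psi_0=0$, while $dd^c\psi_1\ge 0$ simply because $\psi_1$ is plurisubharmonic, so $dd^c u_2\ge M\omega\ge\la\omega$ and likewise for $u_1$; adding more positivity from $\psi_1$ cannot ``spoil'' (b). Once that worry is dropped, the ``Consequently'' clause of Proposition~\ref{e0} indeed furnishes a $\psi_1$ with $dd^c\psi_1=\omega$ on any prescribed relatively compact open set covering $\operatorname{supp}\chi$, and your scheme goes through (with the finite-mass check done most cleanly via Proposition~\ref{partformular1}, since $u_i$ agrees with $a\psi_1+b\psi_0\in\mathcal E_0(V)$ off a compact set).

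The paper takes a much shorter route that avoids cutoffs, auxiliary exhaustions, and transition regions altogether. Pick $\rho\in\mathcal E_0(V)$ and constants $m>\la$, $b>0$, $M>0$ with $(m-\la)|z|^2+\varphi\in\PSH(D)$, $\sup_K(\varphi+m|z|^2)<b$, $\sup_K m|z|^2<b$, and $M\rho<\inf_D\varphi-b$ on $U$. Then
\[
u_1:=\max\{\varphi+m|z|^2-b,\,M\rho\},\qquad u_2:=\max\{m|z|^2-b,\,M\rho\}
\]
are maxima of two plurisubharmonic functions, hence automatically in $\PSH^{-}(V)$; on $U$ the first argument wins, giving $u_1-u_2=\varphi$ and $dd^c u_i\ge\la\omega$; off $K$ one has $\varphi=0$ and hence $u_1=u_2$; and $u_i\ge M\rho$ together with Corollary~\ref{corBe} yields $u_i\in\mathcal E_0(V)$. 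The max does the gluing for free, so no convexifying term $\psi_1$ is needed.
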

\begin{proof}
According to Proposition \ref{e0}, we may find $\rho \in \mathcal E_0 (V).$
Choose real constants $m >\la,  a<0<b$ and $M>0$ satisfying the following conditions:

\noindent
(i) $\sup\limits_{z \in K} \{\max\{\varphi (z)+m\vert z\vert^2, m|z|^2\}\}<b$ on $K, a <\inf\limits_D \varphi$;\\
(ii) $(m-\la)\vert z\vert^2+\va \in \PSH(D)$;\\
(iii) $M\rho <a-b$ on $U$.

\noindent
Set
$$u_1(z):= \max \{\varphi(z)+ m|z|^2 -b, M\rho(z)\}, u_2(z):=  \max \{m|z|^2 -b, M\rho(z)\}, z \in  V.$$
Then we have  $u_1, u_2\in  \mathcal{E}_0(V) \cap  \mathcal{C} (V)$. From (i) and (iii), it is not hard to check that
$\varphi = u_1-u_2$ on $V$. This is our assertion (a). Finally, (b) follows directly from
(ii) and the fact that
$$u_1(z)= \varphi(z)+ m\vert z\vert^2-b, u_2(z)=m\vert z\vert^2-b \ \text{on}\ U.$$
\end{proof}
The next result states among other things that $\mathcal E_0 (V)$ is a convex cone.
\begin{lemma}\label{finite}
(a) $\E_0 (V)$ is a convex cone. More precisely, for $u, v \in \mathcal{E}_0(V)$ we have
$$\int\limits_V(dd^c (u+v))^k \le 2^k \big [\int\limits_V (dd^c u)^k+\int\limits_V (dd^c v)^k\big].$$

\noindent
(b) For $u_1, \cdots, u_k \in \mathcal{E}_0(V)$ we have
$$\int\limits_V dd^c u_1 \wedge \cdots \wedge dd^c u_k \le A_k
\big[\int\limits_V(dd^c u_1)^k+\cdots+\int\limits_V (dd^c u_k)^k \big],$$
where $A_k>0$ depends only on $k.$
\end{lemma}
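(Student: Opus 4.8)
The plan is to reduce part (b) to part (a) by an easy polarization/induction, so the real work is proving (a). For (a), the strategy is the standard trick from Cegrell's theory: first use the algebraic expansion
\[
(dd^c(u+v))^k=\sum_{j=0}^k \binom{k}{j}(dd^c u)^j\wedge (dd^c v)^{k-j},
\]
so that it suffices to bound each mixed term $\int_V (dd^c u)^j\wedge(dd^c v)^{k-j}$ by $\int_V(dd^c u)^k+\int_V(dd^c v)^k$, up to a dimensional constant. Each such mixed term is controlled by Lemma \ref{energy}. Concretely, one wants to invoke Lemma \ref{energy}(c) (or (b)) applied with the exhaustion-type weight: choosing $\rho\in\mathcal E_0(V)$ (which exists by Proposition \ref{e0}, since $V$ is mildly singular hyperconvex — this is where the standing hypothesis enters) and noting $\int_V dd^c u_1\wedge\cdots\wedge dd^c u_k\le \frac{1}{\inf_K(-\rho')}\int_V(-\rho')\,dd^c u_1\wedge\cdots\wedge dd^c u_k$ on a compact exhaustion does not quite work directly because $-\rho'$ is not bounded below by a positive constant globally. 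Instead I would argue as in Lemma \ref{Holder2}: take a regular compact $K$ with $u_K\in\mathcal E_0(V)$, and estimate
\[
\int_V dd^c u_1\wedge\cdots\wedge dd^c u_k=\lim_{\varepsilon\to 0}\Big(\int_{K_\varepsilon}+\int_{V\setminus K_\varepsilon}\Big),
\]
where on $V\setminus K_\varepsilon$ the total mass is already finite (by Corollary \ref{corBe}, each $u_i$ has finite total Monge-Amp\`ere mass, and mixed masses off a neighborhood of the singular/boundary locus are controlled by Chern–Levine–Nirenberg), while on $K_\varepsilon$ one inserts the weight $u_K$ and applies Lemma \ref{energy}.

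More directly, I expect the cleanest route is: for $u,v\in\mathcal E_0(V)$ apply Lemma \ref{energy}(c) to the $k$-tuple obtained by repeating $u$ and $v$ appropriately together with $u_0:=u_K$ for a regular compact $K$ large enough that $u_K\le -1$ somewhere controls things, giving for the mixed term $\int_V(-u_K)(dd^c u)^j\wedge(dd^c v)^{k-j}$ a bound of the form $\big(\int_V(-u)(dd^c u)^k\big)^{j/k}\big(\int_V(-v)(dd^c v)^k\big)^{(k-j)/k}$ up to dimensional constants, and then Young's inequality $a^{j/k}b^{(k-j)/k}\le \frac{j}{k}a+\frac{k-j}{k}b\le a+b$ absorbs the product into a sum. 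One then removes the weight $u_K$ exactly as in Lemma \ref{Holder2}, using that $u_i=u_{K_i}$ off a pluripolar set where the currents put no mass, plus the inner regularity of the positive measures and Corollary \ref{corBe} to handle the region where the weight is small. Finally $\int_V(-u)(dd^c u)^k\le \sup_V(-u)\int_V(dd^c u)^k<\infty$ keeps everything finite, though if one wants the stated inequality with no dependence on $\sup_V(-u)$ one should instead apply Lemma \ref{energy}(c) symmetrically so the $(-u)$, $(-v)$ factors cancel out across the two sides; I would present it in the form where the constant $2^k$ (resp.\ $A_k$) comes purely from the binomial coefficients and Young's inequality.

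For part (b): by symmetry it suffices to bound $\int_V dd^c u_1\wedge\cdots\wedge dd^c u_k$ when the $u_i$ need not be equal. Put $u:=u_1+\cdots+u_k\in\mathcal E_0(V)$ (legitimate by the convex-cone part of (a) applied inductively). Expanding $(dd^c u)^k$ by multinomial, the pure mixed term $k!\,dd^c u_1\wedge\cdots\wedge dd^c u_k$ appears with a positive coefficient among nonnegative terms, hence
\[
\int_V dd^c u_1\wedge\cdots\wedge dd^c u_k\le \frac{1}{k!}\int_V(dd^c(u_1+\cdots+u_k))^k,
\]
and then iterating the inequality in (a) gives $\int_V(dd^c(u_1+\cdots+u_k))^k\le C_k\sum_i\int_V(dd^c u_i)^k$ for a constant $C_k$ depending only on $k$, which is the claim with $A_k:=C_k/k!$.

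The main obstacle I anticipate is not the algebra but the weight-removal step: making rigorous the passage from the energy estimates of Lemma \ref{energy} (which are naturally weighted by elements of $\mathcal E_0(V)$) to the plain total-mass inequalities, uniformly over the approximating sequences, and ensuring all quantities stay finite. This is handled precisely by the technology already in place — the Choquet-lemma approximation $v_j\uparrow u$ with $v_j\in\mathcal E_0(V)$, $v_j\ge\rho$ as in Lemma \ref{Holder2}, the weak$^*$ convergence of the Monge-Amp\`ere currents on $V_{\reg}$, Lemma \ref{outer} to discard the singular locus, and Corollary \ref{corBe} for the a priori finiteness — so I would simply follow that template verbatim. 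One must be slightly careful that the mixed currents $dd^c v_j^{(1)}\wedge\cdots\wedge dd^c v_j^{(k)}$ also converge weakly on $V_{\reg}$ (true, since each factor is a bounded psh sequence decreasing — or increasing after the $\max$ with $\rho$ — so the Bedford–Taylor convergence applies on the smooth locus), which is why working on $V_{\reg}$ and invoking "no mass on $V_{\sig}$" throughout is essential.
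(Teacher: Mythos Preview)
Your approach to part (b) matches the paper's exactly: expand $(dd^c(u_1+\cdots+u_k))^k$ multinomially, use positivity to isolate the mixed term with coefficient $k!$, and then iterate (a).

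For part (a), however, you have taken a substantially more complicated route than the paper. The paper proves (a) using \emph{only} the comparison principle (Theorem~\ref{thmBe}), with no appeal to Lemma~\ref{energy} at all. The argument is this: for $\alpha'\in(0,1)$ chosen so that $(dd^c(u+v))^k$ puts no mass on $\{u=\alpha'v\}$, split
\[
\int_V (dd^c(u+v))^k
=\int_{\{u<\alpha'v\}}(dd^c(u+v))^k+\int_{\{\alpha'v<u\}}(dd^c(u+v))^k.
\]
On $\{u<\alpha'v\}=\{\tfrac{1+\alpha'}{\alpha'}u<u+v\}$ one applies Theorem~\ref{thmBe} to $\tfrac{1+\alpha'}{\alpha'}u$ and $u+v$, and similarly on the other piece with $(1+\alpha')v$. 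Letting $\alpha'\to 1$ gives the constant $2^k$ directly. This is two lines and uses nothing beyond Corollary~\ref{corBe}/Theorem~\ref{thmBe}.

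Your proposal instead runs through the weighted energy estimates of Lemma~\ref{energy} and then attempts a ``weight-removal'' step modeled on Lemma~\ref{Holder2}. The paper itself remarks just before the proof that this route is available (``an easy consequence of the energy estimates''), so your strategy is not wrong in principle. But as written it is incomplete: you never actually carry out the passage from the weighted inequality $\int_V(-u_0)\,dd^c u_1\wedge\cdots\wedge dd^c u_k\le\cdots$ to the unweighted total-mass inequality with a clean dimensional constant, and you acknowledge this (``the main obstacle I anticipate is \ldots the weight-removal step''). The machinery you invoke to patch this---Choquet approximants, convergence on $V_{\reg}$, Lemma~\ref{outer}, inner regularity---is heavy and in fact unnecessary here. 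It also risks a mild circularity in the exposition: Lemma~\ref{energy}(a) rests on Corollary~\ref{partformular2}, whose hypotheses include the finiteness of exactly the mixed masses $\int_V dd^c u\wedge T$ that (b) is meant to establish; the paper's comparison-principle proof sidesteps this entirely.

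In short: your (b) is fine and matches the paper; your (a) is salvageable but overbuilt. Replace it with the two-line comparison-principle argument above.
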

\noindent
Even though this lemma is an easy consequence of the energy estimates given in Lemma \ref{energy}, we offer below a more elementary proof which is based only on the comparison principle.
\begin{proof}
(a) We proceed similarly as in Section 2 of [Ce1]. More precisely,
since the measure $(dd^c(u+v))^k$ is locally finite on $V$, for each $\al \in (0,1)$
there exists  $\alpha' \in (\al,1)$ such that
$\int\limits_{\{u = \alpha'v\}}(dd^c(u+v))^k =0$. Then by Theorem \ref{thmBe}
$$\begin{aligned}
\int\limits_V(dd^c (u+v))^k
&=\int\limits_{\{u <\alpha' v\}}(dd^c (u+v))^k + \int\limits_{\{\alpha' v < u\}}(dd^c (u+v))^k\\
&=\int\limits_{\{\frac{1+\alpha'}{\alpha'}u <u+v\}}(dd^c (u+v))^k + \int\limits_{\{(1+\alpha')v <u+v\}}(dd^c(u+v))^k\\
& \le f(\al')\big [\int\limits_V (dd^c u)^k+\int\limits_V (dd^c v)^k\big],
\end{aligned}$$
where $f(\al'):= \max\{ (1+\al')^k, (1+1/\al')^k\}$. By letting $\al \to 1$ we obtain that $f(\al') \to 2^k.$
This proves the desired estimate.

\noindent
(b) Applying (a) repeatedly we obtain a constant $C_k>0$ depends only on $k$ such that
$$C_k \big [\int_V (dd^c u_1)^k+\cdots+\int_V (dd^c u_k)^k \big] \ge \int_V (dd^c (u_1+\cdots+u_k))^k
\ge k \int_V dd^c u_1 \wedge \cdots \wedge dd^c u_k.$$
The desired conclusion follows.
\end{proof}
We also need the following useful result about a sort of monotonicity for Monge-Amp\`ere measures in
$\mathcal E_0(V).$ In the case of domains in $\mathbb C^n$, this lemma is implicitly contained in [Ce2].
\begin{lemma}\label{monotonicity} (monotonicity lemma)
Let $h \in \PSH^{-}(V)$ and $u^p, v^p, (1 \le p \le k)$ be functions in $\mathcal E_0 (V)$.
Assume that $u^p \le v^p$ on $V$. Then
$$\int_V hdd^c u^1 \wedge \cdots \wedge dd^c u^k
\le  \int_V hdd^c v^1 \wedge \cdots \wedge dd^c v^k.$$
\end{lemma}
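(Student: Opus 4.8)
The plan is to reduce the estimate for $k$-fold products to a one-variable statement by a telescoping argument, and to handle each single step by a density/approximation argument combined with the integration-by-parts formula from Proposition \ref{partformular1} and Corollary \ref{partformular2}. Write the left and right sides of the claimed inequality as the first and last terms of the telescoping chain
$$
\int_V h\,dd^c u^1\wedge\cdots\wedge dd^c u^k
= \sum_{p=0}^{k} \Big(\int_V h\,dd^c v^1\wedge\cdots\wedge dd^c v^p\wedge dd^c u^{p+1}\wedge\cdots\wedge dd^c u^k - \text{(next term)}\Big) + \int_V h\,dd^c v^1\wedge\cdots\wedge dd^c v^k,
$$
so it suffices to prove the single-swap inequality: if $u\le w$ in $\mathcal E_0(V)$, $S:=dd^c s_1\wedge\cdots\wedge dd^c s_{k-1}$ with $s_i\in\mathcal E_0(V)$, and $h\in\PSH^-(V)$, then $\int_V h\,dd^c u\wedge S\le\int_V h\,dd^c w\wedge S$; equivalently $\int_V h\,dd^c(w-u)\wedge S\ge 0$. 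Here I would use Lemma \ref{finite} to know all the mixed Monge–Amp\`ere masses appearing are finite, which is what makes the integrals meaningful.

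The next step is to justify the single-swap inequality when $h$ is bounded and continuous on $V$ (say $h\in\mathcal E_0(V)\cap\CC(V)$, or more generally the difference of two such, as furnished by Lemma \ref{E0}), and then pass to general $h\in\PSH^-(V)$. For the continuous case, I would smooth: replace $u,w,s_i,h$ by their quasi-smoothings $u^\delta,w^\delta,s_i^\delta,h^\delta$ as in Bedford's process, apply Stokes' theorem on $V$ exactly as in the proof of Proposition \ref{partformular1} to move the $dd^c(w^\delta-u^\delta)$ onto $h^\delta$, obtaining $\int_V h^\delta\,dd^c(w^\delta-u^\delta)\wedge S^\delta=\int_V (w^\delta-u^\delta)\,dd^c h^\delta\wedge S^\delta$ — but this is the wrong direction since $h$ is only $\PSH^-$, not bounded below in a controlled way. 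Instead I would argue directly on the currents: $dd^c(w-u)\wedge S$ need not be positive, so I cannot just integrate a negative function against a positive measure. The clean route is the one used for such monotonicity results in Cegrell's theory: write $dd^c(w-u)\wedge S = (dd^c w - dd^c u)\wedge S$ and use the integration-by-parts identity $\int_V h\,dd^c w\wedge S - \int_V h\,dd^c u\wedge S = \int_V (w-u)\,dd^c h\wedge S$, valid because $u=w=0$ is \emph{not} assumed but $\liminf_{z\to\partial V}(w(z)-u(z))\ge 0$ holds (both tend to $0$), so Corollary \ref{coropartformular1}-type reasoning applies after approximating $h$ from above by continuous functions and $w-u\ge 0$. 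Since $dd^c h\wedge S$ is a positive measure and $w-u\ge 0$ on $V$, the right side is $\ge 0$, giving the inequality.

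To make the last identity rigorous I would: (1) first prove it for $h\in\mathcal E_0(V)\cap\CC(V)$ by quasi-smoothing and Stokes as in Proposition \ref{partformular1}, noting $w-u$ is bounded and its smoothings converge locally quasi-uniformly so Proposition \ref{approximationma} applies to both sides; (2) then, for general $h\in\PSH^-(V)$, use Theorem \ref{approximation}(b) to choose $h_j\in\mathcal E_0(V)$ with $h_j\downarrow h$ (one must check $\int_V (w-u)\,dd^c h_j\wedge S$ and $\int_V h_j\,dd^c u\wedge S$ behave well under this monotone limit — the functions $w-u$, $u$ are bounded, the measures $dd^c h_j\wedge S$ have uniformly bounded mass by Lemma \ref{finite}, and one invokes the ACC/weak-convergence machinery together with quasi-continuity and Lemma \ref{ACC} to pass to the limit). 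The main obstacle is precisely this limiting step: controlling $\int_V (w-u)\,dd^c h_j\wedge S$ as $h_j\downarrow h$, since $dd^c h_j\wedge S$ does not converge to $dd^c h\wedge S$ in general (the latter need not even be well-defined a priori for $h$ merely in $\PSH^-$) — one has to exploit that $w-u\ge 0$ is bounded and quasi-continuous and reduce to a genuinely convergent situation, e.g. by monotone convergence of $\int_V h_j\,dd^c u\wedge S$ and $\int_V h_j\,dd^c w\wedge S$ separately (both decreasing in $j$ since $h_j$ decreasing and the measures positive) and identifying the limits.
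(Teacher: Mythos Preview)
Your approach is essentially the paper's --- telescope to a single swap, prove the swap via integration by parts, then pass from $h\in\mathcal E_0(V)$ to general $h\in\PSH^-(V)$ by monotone approximation --- but your write-up buries this under unnecessary detours. The paper's argument is two lines: for $h\in\mathcal E_0(V)$, Corollary \ref{coropartformular1} (already proved, no need to redo the quasi-smoothing and Stokes) gives directly
$$\int_V h\,dd^c u^1\wedge T = \int_V u^1\,dd^c h\wedge T \le \int_V v^1\,dd^c h\wedge T = \int_V h\,dd^c v^1\wedge T,$$
the inequality holding because $u^1\le v^1$ and $dd^c h\wedge T$ is a positive measure; then iterate. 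There is no reason to restrict to continuous $h$, and since every element of $\mathcal E_0(V)$ is bounded and tends to $0$ at $\partial V$, your hesitation about ``the wrong direction'' is misplaced. For general $h\in\PSH^-(V)$, the paper takes $h_j\in\mathcal E_0(V)$ with $h_j\downarrow h$ via Theorem \ref{approximation}, applies the inequality just proved to each $h_j$, and lets $j\to\infty$ by Lebesgue monotone convergence on \emph{both sides} --- precisely the simplification you reach in your last sentence. Your worries about the convergence of $dd^c h_j\wedge S$ are irrelevant: once $\int_V h_j\,dd^c u^1\wedge\cdots\le\int_V h_j\,dd^c v^1\wedge\cdots$ is established for each $j$, both sides are monotone in $j$ and there is nothing further to check.
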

\begin{proof}
First we prove the lemma under the assumption that $h \in \mathcal E_0(V).$
Then, we may apply Corollary \ref{coropartformular1}  repeatedly to obtain
$$\begin{aligned}
\int_V h dd^c u^1\wedge dd^c u^2 \cdots \wedge dd^c u^k
& = \int_V  u^1 dd^c h\wedge dd^c u^2 \wedge \cdots \wedge dd^c u^k\\
&\le \int_V  v^1 dd^ch\wedge dd^c u^2 \cdots \wedge dd^c u^k\\
& = \int_V  u^2 dd^c v^1\wedge dd^c h \wedge \cdots \wedge dd^cu^k\\
&\le \int_V  v^2 dd^c v^1\wedge dd^ch... \wedge dd^cu^k\\
& = \int_V h dd^c v^1\wedge dd^cv^2\wedge... \wedge dd^c u^k\\
& \le \dots\\
&\le  \int_V  h dd^c v^1\wedge... \wedge dd^cv^k.
\end{aligned}
$$
This is our desired estimate. For general $h,$ according to Theorem \ref{approximation}, we
may choose a sequence $h_j \in \mathcal E_0 (V)$ such that $h_j \downarrow h$ on $V.$ By the above reasoning we have
$$\int_V h_jdd^c u^1 \wedge \cdots \wedge dd^c u^k
\le  \int_V h_jdd^c v^1 \wedge \cdots \wedge dd^c v^k, \ \forall j \ge 1.$$
The proof is now completed by letting $j \to \infty$ and applying Lebesgue monotone convergence theorem.
\end{proof}
We are now ready to deliver a class of negative \psh\ functions on $V$ on which the complex Monge-amp\`ere operator may be well defined. This definition is again modeled on Definition 4.1 in [Ce2].
\begin{definition} \label{defdieulong}
Let $V$ be a mildly singular hyperconvex complex variety and $u \in PSH^{-}(V).$ We say that $u \in \E (V)$ if for every $z_0 \in V$ there exists a relatively compact open \nhd\ $U$ of $z_0$ in $V,$ a decreasing sequence $\{h_j\} \subset \E_0 (V)$ such that $h_j \downarrow u$ on $U$ and $\sup\limits_{j}\int\limits_V (dd^c h_j)^k <\infty.$

\noindent
Further, if the above open set $U$ can be chosen to equal $V$ then we write $u \in \F(V)$.
\end{definition}
\noindent
The theorem below is the key to our definition of the Monge-Amp\`ere operator on $\mathcal E(V).$
Needless to say, it is strongly inspired by Theorem 4.2 in [Ce2]. Note that, even in the case of domains in $\mathbb C^n,$ our proof is slightly from [Ce2] and yields a bit stronger statement.
\begin{theorem}\label{maintheorem1}
Let $V$ be a mildly singular hyperconvex variety.
Let $u^1,... u^m \in \mathcal{E} (V) (1 \le m \le k)$ and
$\mathcal {E}_0(V) \ni \{g^p_j\} \downarrow  u^p$ on $V$ as $j \to \infty, 1\le  p\le  m$. Then  the sequence of currents
$dd^c g^1_j\wedge... \wedge  dd^c g^m_j$ is  weak$^*$-convergent on $V$ and  the limit does not  depend  on the  particular sequences $\{g^p_j\}$.
\end{theorem}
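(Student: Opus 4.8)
The plan is to reduce everything to the local situation and then argue by induction on $m$. First I would fix $z_0 \in V$ and a relatively compact open neighborhood $U$ of $z_0$ on which all the sequences $\{g^p_j\}$ are available with $\sup_j \int_V (dd^c g^p_j)^k < \infty$ (shrinking $U$ so that it works simultaneously for all $p$, which is possible since $m$ is finite and $u^p \in \mathcal E(V)$). Passing to a further relatively compact $U' \Subset U$, the masses $\int_{U'} dd^c g^1_j \wedge \cdots \wedge dd^c g^m_j$ are uniformly bounded: this follows from the Chern--Levine--Nirenberg type estimate, or more directly by Lemma \ref{finite}(b) applied to the truncations, using that $\sup_j \int_V (dd^c g^p_j)^k < \infty$. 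Hence by weak$^*$-compactness of bounded positive currents, any subsequence of $\{dd^c g^1_j \wedge \cdots \wedge dd^c g^m_j\}$ has a weak$^*$-convergent subsubsequence on $U'$. So the content of the theorem is that all such limits coincide, and that the limit is independent of the chosen sequences; since $z_0$ is arbitrary this gives global weak$^*$-convergence on $V$.

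The core is the independence statement, and here I would induct on $m$. For $m=1$ the currents $dd^c g^1_j$ converge in the sense of distributions to $dd^c u^1$ (because $g^1_j \downarrow u^1$ in $L^1_{\loc}$, and $dd^c$ is continuous for this convergence), so the limit is $dd^c u^1$ regardless of the sequence. For the inductive step, suppose the result holds for $m-1$ factors and let $\{g^p_j\}, \{\tilde g^p_j\}$ be two admissible sequences decreasing to $u^p$. Write $T_j := dd^c g^2_j \wedge \cdots \wedge dd^c g^m_j$ and $\tilde T_j$ analogously; by the inductive hypothesis $T_j$ and $\tilde T_j$ have the same weak$^*$ limit $T$ on $U'$. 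It then suffices to show $dd^c g^1_j \wedge T_j$ and $dd^c g^1_j \wedge \tilde T_j$ have the same limit, and that this common limit does not change if $g^1_j$ is replaced by $\tilde g^1_j$. The standard way, following Cegrell, is to test against $\varphi \in \mathcal C^\infty_0(D)$: using Lemma \ref{E0} write $\varphi = \psi_1 - \psi_2$ on a neighborhood of $\supp(\varphi|_V)$ with $\psi_1, \psi_2 \in \mathcal E_0(V) \cap \mathcal C(V)$, so that $\int \varphi \, dd^c g^1_j \wedge T_j = \int \psi_1\, dd^c g^1_j \wedge T_j - \int \psi_2 \, dd^c g^1_j \wedge T_j$, and then move $dd^c g^1_j$ off by the integration-by-parts formula of Proposition \ref{partformular1} (valid since $\psi_i, g^1_j$ agree near $\partial V$ after the usual truncation): $\int \psi_i \, dd^c g^1_j \wedge T_j = \int g^1_j \, dd^c \psi_i \wedge T_j$. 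Now in $\int g^1_j \, dd^c \psi_i \wedge T_j$ the bad factor $dd^c g^1_j$ has become a bounded function $g^1_j \downarrow u^1$ quasi-uniformly, multiplying a wedge of $m-1$ Monge--Amp\`ere-type currents of bounded masses; the monotonicity Lemma \ref{monotonicity} together with the ACC/quasi-continuity machinery (Lemma \ref{ACC}, Proposition \ref{approximationma}) lets me pass to the limit and identify $\lim_j \int g^1_j \, dd^c \psi_i \wedge T_j$ with a quantity depending only on $u^1$, $\psi_i$ and $T = \lim T_j$ — in particular independent of the sequences. Reversing the integration by parts produces a well-defined candidate current $dd^c u^1 \wedge dd^c u^2 \wedge \cdots \wedge dd^c u^m$ against which both sequences converge.

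The step I expect to be the main obstacle is the limit passage $\lim_j \int_{U'} g^1_j \, dd^c \psi_i \wedge T_j$ when $\{T_j\}$ itself is only weak$^*$-convergent (not a fixed current): one needs that the measures $dd^c \psi_i \wedge T_j$ converge weak$^*$ to $dd^c \psi_i \wedge T$ — which again is the $m-1$ case applied to the sequences $\psi_i, g^2_j, \dots, g^m_j$ (legitimate because $\psi_i \in \mathcal E_0(V)$ is a constant sequence, so the inductive hypothesis applies) — and simultaneously that $g^1_j$, which decreases to $u^1$ only quasi-uniformly, can be integrated against this varying sequence of measures. The clean way around the "product of two merely-convergent sequences" difficulty is the classical diagonal trick: first replace $g^1_j$ by $\max\{g^1_j, \psi_i\}$ type truncations to get continuity, then use that for decreasing sequences of test functions against a single ACC sequence of measures Lemma \ref{ACC} gives convergence on quasi-open sets, and finally combine with monotonicity (Lemma \ref{monotonicity}) to sandwich and squeeze. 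Handling the singular locus $V_\sig$ throughout — i.e. checking that none of these currents charge $V_\sig$ so that all the integration-by-parts and Stokes arguments from Section 2 apply verbatim — is a recurring technical point but is covered by Lemma \ref{outer} and the remarks following it.
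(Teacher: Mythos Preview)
Your plan has two genuine gaps.

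First, you assume $\sup_j \int_V (dd^c g^p_j)^k < \infty$ for the given sequences, but this is not in the hypothesis: the $\{g^p_j\}$ in the statement are \emph{arbitrary} $\mathcal E_0(V)$-sequences decreasing to $u^p$ on $V$, while the definition of $\mathcal E(V)$ only guarantees the existence of \emph{some} sequence $\{h^p_j\}$ (decreasing to $u^p$ on a neighborhood $U$) with bounded masses. The paper bridges this with a diagonal trick: set $\tilde g^p_j := \max\{g^p_j, h^p_{l(j)}\}$ with $l(j)$ chosen (via Bedford--Taylor monotone convergence on $U$) so that the wedges of the $\tilde g^p_j$ are within $1/j$ of those of the $g^p_j$ when tested against the given $\varphi$. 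Then $\tilde g^p_j \ge h^p_{l(j)}$, so by Corollary \ref{corBe} one has $\int_V (dd^c \tilde g^p_j)^k \le \int_V (dd^c h^p_{l(j)})^k$ bounded, and it suffices to prove convergence for the $\tilde g^p_j$. Without this reduction your appeal to Lemma \ref{finite}(b) is unjustified.

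Second, your inductive step is circular. After integrating by parts you need convergence of $dd^c \psi_i \wedge T_j$, which you call ``the $m-1$ case applied to $\psi_i, g^2_j, \dots, g^m_j$''; but that is $m$ sequences, not $m-1$, so you are invoking the very case being proved. (One could try to rescue this by observing that the $\psi_i$ produced in Lemma \ref{E0} are in fact \emph{smooth} on $U$, so that $dd^c\psi_i\wedge\eta$ is a legitimate test form for the $(m-1,m-1)$-current $T_j$ --- but you do not say this, and the subsequent ``product of two varying sequences'' passage $\lim_j \int g^1_j\, d(dd^c\psi_i\wedge T_j)$ remains only sketched.) More to the point, the integration by parts is an unnecessary detour. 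Once $\varphi = h_1 - h_2$ with $h_i \in \mathcal E_0(V)$ and the wedge is completed with $(dd^c\rho)^{k-m}$ (Proposition \ref{e0} supplies $\rho\in\mathcal E_0(V)$ with $dd^c\rho=\omega$ on $U$), the map
\[
j \ \longmapsto\ \int_V h_i\, dd^c \tilde g^1_j \wedge \cdots \wedge dd^c \tilde g^m_j \wedge (dd^c\rho)^{k-m}
\]
is already monotone by Lemma \ref{monotonicity} (with $h=h_i$ fixed and all factors $\tilde g^p_j$ decreasing simultaneously), and bounded below since $h_i$ is bounded and the total mass is finite after the diagonal trick. Hence it converges --- no induction needed. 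Independence is then a second application of the same monotonicity, sandwiching via $\max\{\tilde g^p_j,\tilde v^p_l\}$. This direct route is what the paper does.
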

The proof given below is
\begin{proof}
Fix $z_0 \in V,$ it is enough to show  $dd^c g^1_j \wedge \cdots \wedge dd^c g^m_j$ is weak $^*-$ convergent on some small \nhd\ $U$ of $z_0$ in $V$ and that the convergent does not depend on the sequences
$\{g^p_j\}, 1 \le p \le m.$
For this, by the definition of $\mathcal E(V)$, we may choose a relatively compact open subset $U$ of $V$ and
decreasing sequences $\{h^p_j\}, (1 \le p \le m)$
in $\mathcal E_0 (V)$ such that $\{h^1_j\} \downarrow u^1, \cdots, \{h^m_j\} \downarrow u^m$ on $U$ and
$$\sup_{1 \le p \le m} \sup_{j \ge 1} \int_V (dd^c h^p_j)^k<\infty.$$
Fix a smooth form $\theta$ of bidegree $(k-m,k-m)$ on $D$ such that $\theta|_V$ has compact support in $U.$
We must show the following assertions:

\noindent
(a)$\exists \underset{j \to \infty}{\lim}\int\limits_V \theta \wedge dd^c g^1_j \wedge \cdots \wedge dd^c g^m_j;$

\noindent
(b) The limit does not depend on the sequences $\{g^p_j\} (1 \le p \le m)$.

Since $\theta$ can be expressed as a linear combination of strongly positive $(k-m,k-m)$ forms on $V$, we may assume
$\theta$ is such a strongly positive form. Moreover, after a linear change of coordinates (in $\mathbb C^n$) we may achieve that $\theta=\va \om^{k-m}$ where $\va$ is a smooth function such that $\va|_V$ is
supported in $U.$
Now for (a), by Proposition \ref{e0}, we may choose $\rho \in \E_0 (V)$ such that $dd^c \rho=\om$ on $U$.
Next, we define for $j,l \ge 1$
$$g^p_{j,l}:=\max\{g^p_j, \tilde h^p_l\}, 1 \le p \le m.$$
It is clear that $g^p_{j,l} \downarrow g^p_j$ on $U$ as $l \to \infty$ and $j$ is fixed. By Bedford-Taylor monotone convergence theorem
$dd^c g^1_{j,l} \wedge \cdots \wedge dd^c g^m_{j,l} \wedge (dd^c \rho)^{k-m}$ is weak $^*-$ convergent to
$dd^c g^1_j \wedge \cdots \wedge dd^c g^m_j \wedge (dd^c \rho)^{k-m}$ on $U.$ So we can find $l(j)$ such that
$$\begin{aligned}
&\big \vert \int_V \va dd^c g^1_{j,l(j)} \wedge \cdots \wedge dd^c g^m_{j,l(j)} \wedge \om^{k-m}-
\int_V \va dd^c g^1_j \wedge \cdots \wedge dd^c g^m_j \wedge \om^{k-m}\big \vert\\
&= \int_V \va dd^c g^1_{j,l(j)} \wedge \cdots \wedge dd^c g^m_{j,l(j)} \wedge (dd^c \rho)^{k-m}-
\int_V \va dd^c g^1_j \wedge \cdots \wedge dd^c g^m_j \wedge (dd^c\rho)^{k-m}\big \vert <\frac1{j}.
\end{aligned}$$
Of course we may arrange so that $l(j)$ is increasing.
Set $$\tilde g^p_j:=g^p_{j,l(j)}, 1 \le p \le m, j \ge 1.$$
By the comparison principle we have $\tilde g^p_j \in \mathcal E_0 (V),$ for $1 \le p \le m$ and
$$\sup_{1 \le p \le m} \sup_{j \ge 1} \int_V (dd^c \tilde g^p_j)^k \le \sup_{1 \le p \le m} \sup_{j \ge 1} \int_V(dd^c \tilde h^p_j)^k<\infty.$$
Moreover, $\tilde g^p_j$ is decreasing entirely on $V$ and
$\tilde g^p_j \downarrow u^p$ on $\om$ as $j \to \infty.$
Next, by applying Lemma  \ref{finite} (b) to
$u^1:=\tilde g^1_j, \cdots, u^m:=\tilde g^m_j, u^{m+1}=\cdots=u^k=\rho$ we obtain
$$\sup_{j \ge 1} \int_V dd^c \tilde g^1_j \wedge \cdots \wedge dd^c \tilde g^m_j \wedge (dd^c \rho)^{k-m}<\infty.$$
Now, by Lemma \ref{E0} we may write $\va=h_1-h_2$ with $h_1, h_2 \in \mathcal E_0 (V).$
By Lemma \ref{monotonicity} we deduce that
the sequence
$\int_V h_1 dd^c \tilde g^1_j \wedge \cdots \wedge dd^c \tilde g^m_j \wedge (dd^c \rho)^{k-m}$ is decreasing and bounded from below by
$$(\inf_V h_1) \sup_{j \ge 1} \int_V dd^c \tilde g^1_j \wedge \cdots \wedge dd^c \tilde g^m_j
\wedge (dd^c \rho)^{k-m}>-\infty.$$
Thus it converges to some (finite) limit. By the same argument
$\int_\Om h_2 dd^c \tilde g^1_j \wedge \cdots \wedge dd^c \tilde g^m_j \wedge (dd^c \rho)^{k-m}$ is convergent too.
Therefore there exists a limit
$$\exists \lim_{j \to \infty} \int_V \va dd^c \tilde g^1_j \wedge \cdots \wedge dd^c \tilde g^m_j \wedge
(dd^c \rho)^{k-m} =\al \in \mathbb R.$$
It follows that
$$\lim_{j \to \infty} \int_V \va dd^c g^1_j \wedge \cdots \wedge dd^c g^m_j \wedge \om^{k-m}
=\lim_{j \to \infty} \int_V \va dd^c g^1_j \wedge \cdots \wedge dd^c g^m_j \wedge (dd^c \rho)^{k-m}=\al.$$
This proves (a).
For the assertion (b), we let $\{v^p_j\} (1 \le p \le m)$ be another sequences in $\mathcal E_0 (V)$ such that $v^p_j \downarrow u^p$
on $V$ for each $1 \le p \le m.$ Next we define $\tilde v^p_j$ analogously as $\tilde g^p_j.$
It suffices to show
$$\lim_{j \to \infty} \int_V \va dd^c \tilde g^1_j \wedge \cdots \wedge dd^c \tilde g^m_j \wedge (dd^c \rho)^{k-m}
=\lim_{j \to \infty} \int_V \va dd^c \tilde v^1_j \wedge \cdots \wedge dd^c \tilde v^m_j \wedge (dd^c \rho)^{k-m}.$$
Hence it is enough to prove the following two assertions
\begin{equation}\label{eq31}\lim_{j \to \infty} \int_V h_1 dd^c \tilde g^1_j \wedge \cdots \wedge dd^c \tilde g^m_j
\wedge (dd^c \rho)^{k-m}=\lim_{j \to \infty}
\int_V h_1 dd^c \tilde v^1_j \wedge \cdots \wedge dd^c \tilde v^m_j \wedge (dd^c \rho)^{k-m},
\end{equation}
\begin{equation}\label{eq32}
\lim_{j \to \infty} \int_V h_2 dd^c \tilde g^1_j \wedge \cdots \wedge dd^c \tilde g^m_j
\wedge (dd^c \rho)^{k-m}=\lim_{j \to \infty}
\int_V h_2 dd^c \tilde v^1_j \wedge \cdots \wedge dd^c \tilde v^m_j \wedge (dd^c \rho)^{k-m}.
\end{equation}
Obviously, it suffices to show (\ref{eq31}) since the proof of (\ref{eq32}) is completely similar.
To this end, we set $\hat g^p_{j,l}:=\max \{\tilde g^p_j, \tilde v^p_l\}$ for $1 \le p \le m$.
Since $\{\tilde v^p_j\}$ and $\{\tilde g^p_j\}$ decrease to the same limit as $j \to \infty$ for each $p$ we infer that
$\hat g^p_{j ,l} \downarrow \tilde g^p_j$ on $V$ as $l \to \infty.$
It follows from Bedford-Taylor's monotone convergence theorem that
$h_1dd^c \hat g^1_{j,l} \wedge \cdots \wedge dd^c \hat g^m_{j, l} \wedge (dd^c \rho)^{k-m}$ is weak $^*-$ convergent to
$h_1dd^c \tilde g^1_j \wedge \cdots \wedge dd^c \tilde g^m_j \wedge (dd^c \rho)^{k-m}$ on $V.$
Fix $\ve>0$.
For each $j$ we can find $l'(j) \ge j$ so large such that
$$\begin{aligned}
\int_V h_1 dd^c \tilde v^1_{l'(j)} \wedge \cdots \wedge dd^c \tilde v^m_{l'(j)} \wedge (dd^c \rho)^{k-m}
&\le \int_V h_1 dd^c \hat g^1_{j,l'(j)} \wedge \cdots \wedge dd^c \hat g^m_{j,l'(j)} \wedge (dd^c \rho)^{k-m}\\
&\le\int_V h_1 dd^c \tilde g^1_j \wedge \cdots \wedge dd^c \tilde g^m_j \wedge (dd^c \rho)^{k-m}+\ve.
\end{aligned}$$
Here the first inequality follows from the monotonicity lemma.
By letting $j \to \infty$ we obtain
$$\lim_{j \to \infty} \int_V h_1 dd^c \tilde v^1_j \wedge \cdots \wedge dd^c \tilde v^m_j  \wedge (dd^c \rho)^{k-m}
\le \lim_{j \to \infty} \int_V h_1 dd^c \tilde g^1_j \wedge \cdots \wedge dd^c \tilde g^m_j \wedge (dd^c \rho)^{k-m}+\ve.$$
Since $\ve>0$ is arbitrary we get
$$\lim_{j \to \infty} \int_V h_1 dd^c \tilde v^1_j \wedge \cdots \wedge dd^c \tilde v^m_j \wedge (dd^c \rho)^{k-m}
\le \lim_{j \to \infty} \int_V h_1 dd^c \tilde g^1_j \wedge \cdots \wedge dd^c \tilde g^m_j \wedge (dd^c \rho)^{k-m}.$$
By exchanging the roles of $\tilde v^p_j$ and $\tilde g^p_j$ we obtain the reverse inequality. This proves (\ref{eq31}) and also the theorem.
\end{proof}
The above result enables us to make the following crucial definition.
\begin{definition} \label{mongeampere}
Let $V$ be a mildly singular hyperconvex variety and $u^1,... u^m \in \mathcal{E} (V) (1 \le m \le k)$.
Then we define $dd^c u^1 \wedge \cdots \wedge dd^c u^m$ to be the limit current given by Theorem \ref{maintheorem1}.
\end{definition}
\begin{definition}\label{Kclass}
Consider a subset $\mathcal{K}$ of $PSH^{-} (V)$ that satisfies following two conditions:

\noindent
(i) If $u \in \mathcal{K}, v \in PSH^{-}(V)$ then  $\max \{u, v\} \in \mathcal{K}$.

\noindent
(ii) If  $u \in \mathcal{K},\va_j \in PSH^{-}(V) \cap L^\infty_{\loc}(V), \va_j \downarrow u$ on  $V$  as
$j \to \infty$ then  $(dd^c \va_j)^k$ is  weak$^*-$convergent on $V$.
\end{definition}
The following result which is again modeled on Theorem 4.5 in [Ce2] shows that $\E$ is the  largest  class  which  (i)  and (ii) in  Definition \ref{Kclass} holds.
\begin{theorem} \label{bigclass}
Let $V$ be a mildly singular hyperconvex variety.
Then the following statements hold true:

\noindent
(a) $\E (V)$ satisfies conditions (i) and (ii) in Definition \ref{Kclass}.

\noindent
(b) Let $\mathcal{K}$ be a sub-class of $\PSH^{-}(V)$
having the properties (i) and (ii) in Definition \ref{Kclass} and $u \in \mathcal{K}.$
Then for every open \nhd\ $U \subset V$ of $\ov{V_\red} \cap V$ satisfying $\ov{U} \cap \partial V=\ov{V_\red} \cap \partial V,$
there exists  sequence $\va_j \in \E_0 (V)$ such that $\va_j \downarrow u$ on $U,$ and for each compact subset
$K$ of $\ov{U} \cap V$ we have
\begin{equation} \label{condition}
\sup\limits_{j \ge 1} \int\limits_{(V \setminus \ov{U}) \cup K} (dd^c \va_j)^k<\infty.
\end{equation}
\end{theorem}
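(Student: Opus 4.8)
For part (a), both conditions are local, so fix $z_0 \in V$ and let $U \subset V$ and $\{h_j\} \subset \E_0(V)$ be as in Definition \ref{defdieulong}, i.e. $h_j \downarrow u$ on $U$ and $\sup_j \int_V (dd^c h_j)^k < \infty$. For (i), given $v \in \PSH^-(V)$, I would use Theorem \ref{approximation}(b) to pick $\{g_j\} \subset \E_0(V)$ with $g_j \downarrow v$ on $V$ and set $w_j := \max\{h_j, g_j\}$; each $w_j$ is bounded, has zero boundary values, and by Corollary \ref{corBe} (applied to $h_j \le w_j$) satisfies $\int_V (dd^c w_j)^k \le \int_V (dd^c h_j)^k$, so $\{w_j\} \subset \E_0(V)$ is decreasing with uniformly bounded total masses and $w_j \downarrow \max\{u,v\}$ on $U$; hence $\max\{u,v\} \in \E(V)$. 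For (ii), given $\va_j \downarrow u$ with $\va_j \in \PSH^-(V) \cap L^\infty_{\loc}(V)$, I would set $g_j := \max\{\va_j, h_{l(j)}\}$, where $l(j) \uparrow \infty$ is chosen — using that $\max\{\va_j, h_l\} \downarrow \va_j$ on $U$ as $l \to \infty$ for fixed $j$ together with the Bedford--Taylor monotone convergence theorem, and a diagonal argument over a countable family of test forms dense on $U$ — so that $(dd^c g_j)^k - (dd^c \va_j)^k \to 0$ weak$^*$ on $U$. Then $g_j \in \E_0(V)$, the sequence $\{g_j\}$ is decreasing on $V$ to an element of $\E(V)$ (by part (i)), and $\sup_j \int_V (dd^c g_j)^k < \infty$, so Theorem \ref{maintheorem1} gives that $(dd^c g_j)^k$ converges weak$^*$ on $V$; therefore $(dd^c \va_j)^k$ converges on $U$, and since $z_0$ is arbitrary, on all of $V$.

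For part (b), I would first use Theorem \ref{approximation}(b) to fix $\{v_j\} \subset \E_0(V)$ with $v_j \downarrow u$ on $V$. Since $u \in \mathcal K$ and each $v_j \in L^\infty_{\loc}(V)$, condition (ii) of Definition \ref{Kclass} makes $(dd^c v_j)^k$ weak$^*$ convergent on $V$, hence $\sup_j \int_K (dd^c v_j)^k < \infty$ for every compact $K \subset V$. The candidate sequence is the balayage
$$\va_j := \Big( \sup \{ v \in \PSH^-(V) : v \le v_j \text{ on } U \} \Big)^*.$$
Because $V_{\red} \subset U$ and $\va_j$ coincides with $v_j$ on the open set $U$ (a short check using upper semicontinuity of $v_j$), Proposition \ref{regular} yields $\va_j \in \PSH^-(V)$; it is bounded (it dominates $v_j \in L^\infty(V)$), has zero boundary values, and $\int_V (dd^c \va_j)^k \le \int_V (dd^c v_j)^k < \infty$ by Corollary \ref{corBe}, so $\va_j \in \E_0(V)$. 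The tighter constraint for $j+1$ forces $\va_{j+1} \le \va_j$, and $\va_j = v_j \downarrow u$ on $U$. Finally, the usual balayage argument — bumping $\va_j$ on a small ball lying in $V \setminus \ov U$ (which is disjoint from $V_{\red}$), using Lemma \ref{local} with zero right-hand side and Lemma \ref{gluing} — shows $\va_j$ is maximal on $V \setminus \ov U$, so $(dd^c \va_j)^k = 0$ there.

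It then remains to verify $(\ref{condition})$. Fix a compact $K \subset \ov U \cap V$. Since $(dd^c \va_j)^k$ vanishes on $V \setminus \ov U$ and equals $(dd^c v_j)^k$ on the open set $U$,
$$\int_{(V \setminus \ov U) \cup K} (dd^c \va_j)^k = \int_{K \cap U} (dd^c v_j)^k + (dd^c \va_j)^k(K \cap \partial U) \le \sup_j \int_K (dd^c v_j)^k + (dd^c \va_j)^k(K \cap \partial U),$$
and the first term is finite uniformly in $j$ by the choice of $\{v_j\}$. Thus everything reduces to a uniform bound for the Monge--Amp\`ere mass the balayage places on the compact piece $K \cap \partial U$ of the interior boundary $\partial U \cap V$. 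This is where the hypothesis $\ov U \cap \partial V = \ov{V_{\red}} \cap \partial V$ enters: it forces $V \setminus \ov U$ to cluster only at points of $\partial V$ outside $\ov{V_{\red}}$, so the only mass of $(dd^c v_j)^k$ the balayage can sweep onto a fixed compact part of $\partial U \cap V$ (which stays at positive distance from $\partial V$) is dominated by $(dd^c v_j)^k$ on a compact subset of $V$, hence bounded uniformly in $j$ by (ii); the remaining swept mass escapes toward $\partial V$ and so lies outside every compact $K$, where $(\ref{condition})$ asks nothing of it.

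The main obstacle is to make this last point rigorous rather than heuristic. My plan is: given $K \subset \ov U \cap V$ compact, fix a relatively compact open $W$ with $K \subset W \subset\subset V$ and estimate $\int_W (dd^c \va_j)^k$ by comparing $\va_j$ with $v_j$ on $W$ through the comparison principle (Theorem \ref{thmBe}), the quasi-continuity of plurisubharmonic functions, and Lemma \ref{ACC} — the latter allowing the weak$^*$ limit of $(dd^c v_j)^k$ to be evaluated on the quasi-open contact set $\{\va_j = v_j\}$ — while the geometric hypothesis on $U$ keeps the uncontrolled part of $(dd^c v_j)^k$ on $V \setminus \ov U$ away from $\ov W$. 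As a consistency check, if $V$ carries a continuous negative plurisubharmonic exhaustion one may take the $v_j$ continuous (Theorem \ref{approximation}(c)), and then the classical obstacle-problem identity $(dd^c \va_j)^k = \mathbf 1_{\{\va_j = v_j\}}(dd^c v_j)^k$ makes the estimate immediate; the genuine difficulty is thus the possibly discontinuous case.
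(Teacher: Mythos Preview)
Your argument for part (a) is correct, though more elaborate than necessary. For (i), the paper simply takes $\max\{u_j,v\}$ without approximating $v$: since $u_j\in\E_0(V)$ is bounded, $\max\{u_j,v\}\ge u_j$ is already bounded with zero boundary values, and Corollary~\ref{corBe} gives the mass bound. For (ii), the paper avoids the diagonal argument by setting $\tilde\va_j:=\max\{\va_j,M_j\rho\}$ with $M_j\uparrow\infty$ chosen so that $M_j\rho<\va_j$ on $U$; then $\tilde\va_j=\va_j$ on $U$ \emph{exactly}, $\tilde\va_j\in\E_0(V)$, and $\tilde\va_j\downarrow u$ on all of $V$, so Theorem~\ref{maintheorem1} applies directly (no mass bound on the $\tilde\va_j$ is required there). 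Your justification ``by part (i)'' that $\lim g_j\in\E(V)$ is also not quite right; the correct reason is that $\{g_j\}\subset\E_0(V)$ is decreasing on $V$ with uniformly bounded mass, so the limit lies in $\F(V)$ by definition.

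The real gap is in part (b). You apply condition (ii) of $\mathcal K$ to the sequence $\{v_j\}$, obtain local mass bounds for $(dd^c v_j)^k$, and then try to transfer these to $(dd^c\va_j)^k$ across the contact boundary $\partial U$. As you yourself note, this last step is only heuristic, and your proposed plan via Theorem~\ref{thmBe} and Lemma~\ref{ACC} would be delicate to carry out in the absence of continuity. The paper bypasses this entirely by applying condition (ii) to the balayage sequence $\{\va_j\}$ itself. The point you are missing is that $\va_j$ decreases on $V$ to some $\tilde u\in\PSH^-(V)$ with $\tilde u\ge u$, hence $\tilde u=\max\{u,\tilde u\}\in\mathcal K$ by property (i); then property (ii) gives weak$^*$ convergence of $(dd^c\va_j)^k$ on $V$, and weak$^*$ convergence of positive measures immediately yields $\sup_j\int_K(dd^c\va_j)^k<\infty$ for every compact $K\subset V$. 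Since $(dd^c\va_j)^k$ is supported in $\ov U\cap V$, the estimate~(\ref{condition}) follows at once. No comparison with $(dd^c v_j)^k$, and no analysis of mass on $\partial U$, is needed.
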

\begin{remark}
If $V$ is very mildly singular then $\ov{V_\red} \cap \partial V=\emptyset.$ So by taking $K=\ov{U}$ in that case we conclude that $\mathcal {K}=\E(V).$
\end{remark}
\begin{proof}
(a) Let $u \in \E (V), v \in PSH^{-}(V)$ and a point $z_0 \in V$.
Since $u \in \E (V)$, we may choose an open \nhd\ $U \subset \subset V$ of $z_0$
and a  decreasing  sequence $u_j \in \E_0(V)$ such that $u_j \downarrow  u$ on  $U$ and
$\sup\limits_j \int\limits_V( dd^c u_j)^k < \infty$.
Put $\va_j:= \max \{u_j, v\}.$ Then $\va_j \in \E_0(V)$ by the comparison principle and
$\va_j \downarrow \max \{u,v\}$ on  $V$.
Moreover, applying again the comparison principle we obtain
$$\sup_j \int\limits_V(dd^c \va_j)^k \le  \sup\limits_j \int\limits_V(dd^c u_j)^k<\infty.$$
Hence $\max\{u, v\}\in \E (V)$ and then $\E (V)$ satisfies (i).
For (ii), let $\va_j \in PSH^{-}(V) \cap L^\infty_{\loc}(V)$ be such that $\va_j \downarrow u$ on  $V$.
By Proposition \ref{E0}, there exists an element $\rho \in \E_0(V)$. Choose a sequence $M_j \uparrow \infty$  such that
$M_j \sup\limits_{U}\rho <\inf\limits_U \va_j.$ Now we set $\tilde{\va}_j:= \max \{\va_j, M_j\rho\}$. Then
$\tilde{\va}_j \in \E_0(V)$,  $\tilde{\va}_j \downarrow u$ on  $V$  and $  \tilde{\va}_j=\va_j$ on $U$.
By Theorem  \ref{maintheorem1} we obtain that $(dd^c \tilde{\va}_j)^k$  is  weak$^*-$convergent on $V$. In particular
$(dd^c \va_j)^k$  is  weak$^*-$convergent on $U$. Since $z_0$ is an arbitrary point in $V$ we conclude that
$\E (V)$ satisfies (ii).

\noindent
(b) Set $u_j: =\max\{u, j\rho\}, j \ge 1.$ Then $\E_0 (V) \ni u_j\downarrow u$ on $V.$
Put
$$\va_j:= \sup \{v \in \PSH^{-} (V): v \le u_j \ \ \text{on}\ U\}.$$
Since $u_j$ belongs to the defining family for $\va_j, \va_j \ge u_j$ on $V,$
and $\va_j \le u_j$ on $U$. Therefore $\va_j=u_j$ on $U.$
On the  other  hand, because  $V$ is locally irreducible at every point of  $V \setminus U$ we infer that
$\va^*_j \in  PSH^{-}(V).$ Hence  $\va_j^*$  also belongs to the constituting family  for $\va_j$.
This implies that $\va_j = \va^*_j \in PSH^{-}(V)$. It follows that $\va_j \in \E_0 (V).$
Furthermore, by the same reasoning as in the proof of
Proposition \ref{e0}(b) we see that
$$\text{supp}\ (dd^c \va_j)^k \subset \overline{U} \cap V.$$
Notice that  $\va_j$ is  decreasing,  $\va_j \ge u$ on  $V$ and  $\va_j \downarrow  u$ on  $U$.
Thus, using (i) we obtain that $\va_j \downarrow \tilde u \in \mathcal K.$
By condition (ii) we have  $(dd^c \va_j)^k$ is weak$^*-$convergent to some measure $\mu$ on $V$.
It is then clear that $\mu$ vanishes off $\ov{U} \cap V$.
In particular $\sup\limits_{j \ge 1} \int_{K} (dd^c \va_j)^k<\infty$ for each compact subset $K$ of $\ov{U} \cap V.$
Combining these facts we see that $\va_j$ satisfies the condition (\ref{condition}).
The proof is thereby completed.
\end{proof}
\begin{remark} We have shown, in part (a) of the theorem, that if
$u \in \E(V),\va_j \in PSH^{-}(V) \cap L^\infty_{\loc}(V), \va_j \downarrow u$ on  $V$  as
$j \to \infty$ then  $(dd^c \va_j)^k$ is  weak$^*-$convergent to $(dd^c u)^k$ on $V.$
\end{remark}
\noindent
Our next main result is an analogue of the monotone convergence theorem of Bedford and Taylor for the class $\E (V)$.
\begin{theorem} \label{maintheorem2}
Assume that $V$ is a mildly singular hyperconvex variety.
Let $\{u_j\} \in \E (V)$ be a sequence that decreases to $u \in \E(V)$.
Then $(dd^c u_j)^k$ converges in the weak$^*-$topology of currents to $(dd^c u)^k$ on $V.$
\end{theorem}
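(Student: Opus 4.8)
\noindent\emph{Strategy.} Since weak convergence of currents on $V$ is a local matter, the plan is to fix $z_0\in V$, invoke the definition of $u\in\E(V)$ at $z_0$ to obtain a relatively compact open set $U\ni z_0$ and a decreasing sequence $g_l\in\E_0(V)$ with $g_l\downarrow u$ on $U$ and $A:=\sup_l\int_V(dd^c g_l)^k<\infty$, and to prove that $\int_V\va\,(dd^c u_j)^k\to\int_V\va\,(dd^c u)^k$ as $j\to\infty$ for every $\va\in\CC^\infty_0(D)$ whose restriction to $V$ has compact support in $U$; a partition of unity argument (together with the mass bound $A$, which controls general continuous test functions on $U$) then promotes this to weak convergence on all of $V$. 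The working object will be the doubly indexed family $w_{j,l}:=\max\{u_j,g_l\}$. As $w_{j,l}\ge g_l$ and both functions vanish at $\partial V$, Corollary \ref{corBe} shows $w_{j,l}$ has finite total Monge--Amp\`ere mass $\le A$, so $w_{j,l}\in\E_0(V)$; moreover $w_{j,l}$ is decreasing in $j$ (because $u_j\downarrow$) and in $l$ (because $g_l\downarrow$), with $w_{j,l}\downarrow u_j$ on $U$ as $l\to\infty$ and $w_{j,l}\downarrow g_l$ on $U$ as $j\to\infty$ (here one uses $g_l\ge u$ and $u_j\ge u$ on $U$).

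I would then record three ``one-variable'' limits on $U$. First, for fixed $j$ the sequence $\{w_{j,l}\}_l\subset\E_0(V)$ decreases to $u_j$ on $U$ with uniformly bounded masses, hence by Theorem \ref{maintheorem1} and Definition \ref{mongeampere} one has $(dd^c w_{j,l})^k\to(dd^c u_j)^k$ weakly on $U$ as $l\to\infty$, so $\int_V\va\,(dd^c u_j)^k=\lim_{l}\int_V\va\,(dd^c w_{j,l})^k$ since $\su(\va|_V)$ is compact in $U$. Second, for fixed $l$ the functions $w_{j,l}$ are uniformly bounded in $j$ and decrease to $g_l$ on $U$, so the Bedford--Taylor monotone convergence theorem gives $(dd^c w_{j,l})^k\to(dd^c g_l)^k$ weakly on $U$ as $j\to\infty$, i.e. $\int_V\va\,(dd^c g_l)^k=\lim_{j}\int_V\va\,(dd^c w_{j,l})^k$. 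Third, by the very definition of $(dd^c u)^k$ via the approximating sequence $g_l$, $\int_V\va\,(dd^c g_l)^k\to\int_V\va\,(dd^c u)^k$ as $l\to\infty$. Thus the theorem is reduced to interchanging the limits in $j$ and in $l$ of $\int_V\va\,(dd^c w_{j,l})^k$.

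To carry out the interchange I would decompose the test function via Lemma \ref{E0}: write $\va|_V=h_1-h_2$ with $h_1,h_2\in\E_0(V)$, and put $a^{(i)}_{j,l}:=\int_V h_i\,(dd^c w_{j,l})^k$ for $i=1,2$. Each $a^{(i)}_{j,l}$ is finite ($h_i$ is bounded and $(dd^c w_{j,l})^k$ has mass $\le A$), is decreasing in $j$ and in $l$ by the monotonicity lemma (Lemma \ref{monotonicity}, applied with negative \psh\ weight $h_i$ and the orderings $w_{j+1,l}\le w_{j,l}$, $w_{j,l+1}\le w_{j,l}$), and is bounded below by $(\inf_V h_i)\,A>-\infty$. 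For such a separately monotone, bounded doubly indexed sequence the iterated limits exist and coincide (both equal $\inf_{j,l}a^{(i)}_{j,l}$). Since $a^{(1)}_{j,l}-a^{(2)}_{j,l}=\int_V\va\,(dd^c w_{j,l})^k$, combining this with the three limits above yields
\begin{align*}
\lim_{j\to\infty}\int_V\va\,(dd^c u_j)^k &=\lim_{j}\lim_{l}\bigl(a^{(1)}_{j,l}-a^{(2)}_{j,l}\bigr)=\lim_{l}\lim_{j}\bigl(a^{(1)}_{j,l}-a^{(2)}_{j,l}\bigr)\\
&=\lim_{l\to\infty}\int_V\va\,(dd^c g_l)^k=\int_V\va\,(dd^c u)^k,
\end{align*}
which is the desired convergence.

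The hard part -- and the only genuinely delicate point -- is precisely this interchange of limits. The difficulty is that for $u_j\in\E(V)\setminus\F(V)$ the total mass $\int_V(dd^c u_j)^k$ may be infinite, so one cannot split $\int_V\va\,(dd^c u_j)^k$ directly as $\int_V h_1(dd^c u_j)^k-\int_V h_2(dd^c u_j)^k$ and pass to the limit term by term; the decomposition has to be performed on the \emph{finite} measures $(dd^c w_{j,l})^k$, where the monotonicity lemma is available, and only the compact support of $\va$ lets one return safely to $(dd^c u_j)^k$ and $(dd^c u)^k$. A minor point to verify along the way is that Theorem \ref{maintheorem1} (hence Definition \ref{mongeampere}) may be applied to $\E_0$-sequences that decrease to the limit function only on a neighbourhood of the point in question -- which is exactly what the purely local proof of Theorem \ref{maintheorem1} provides.
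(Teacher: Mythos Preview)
Your argument is correct. The approach, however, differs from the paper's. The paper does not localize via the defining sequence $g_l$ of $u\in\E(V)$; instead it fixes one $\rho\in\E_0(V)$ with $\rho\ge u_1$ and builds a \emph{single} diagonal sequence $v_{j+1}:=\max\{u_{j+1},p_{j+1}v_j\}\in\E_0(V)$, choosing $p_{j+1}$ large enough that $\bigl|\int_V\theta(dd^c v_{j+1})^k-\int_V\theta(dd^c u_{j+1})^k\bigr|<1/(j+1)$ (this is possible because $\max\{u_{j+1},l v_j\}\downarrow u_{j+1}$ and the remark after Theorem \ref{bigclass} applies). Since $p_j\uparrow\infty$ one gets $v_j\downarrow u$ on all of $V$, so the already established property (ii) for $\E(V)$ gives $(dd^c v_j)^k\to(dd^c u)^k$, and the approximation inequality transfers this to $(dd^c u_j)^k$.

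Your double-array argument with $w_{j,l}=\max\{u_j,g_l\}$ and the interchange of iterated limits via the monotonicity lemma is a legitimate alternative: it is more explicit about why the interchange succeeds (separate monotonicity in $j$ and $l$ plus the uniform lower bound $(\inf_V h_i)A$), at the cost of having to invoke the locality of Theorem \ref{maintheorem1} twice (for $w_{j,l}\downarrow u_j$ and $g_l\downarrow u$, which hold only on $U$). You flag this correctly; it is indeed justified by the proof of Theorem \ref{maintheorem1}, which only uses agreement of the approximants with the limit on the neighbourhood supporting the test function. The paper's diagonal extraction sidesteps this issue by producing a sequence that converges to $u$ on all of $V$, so it can quote the global statement directly. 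Both routes rest on the same two ingredients -- Lemma \ref{E0} and Lemma \ref{monotonicity} -- and yield the same conclusion; yours is slightly longer but perhaps more transparent, while the paper's is terser.
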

\begin{proof}
Fix $\rho \in \E_0 (V).$ After replacing $\rho$ by $\max\{\rho, u_1\}$ we may assume $\rho \ge u_1$ on $V.$
Let $\theta$ be a smooth function with compact support in $V.$
Now we construct, by induction, a sequence $\{v_j\} \in \E_0 (V)$ such that:

\noindent
(i) $v_0=\rho, \max \{u_j, v_{j+1}\} \le v_j \le \max\{u_j, \rho\}$ on $V;$

\noindent
(ii) $\Big \vert \int\limits_V \theta (dd^c v_{j+1})^k-\int\limits_V \theta (dd^c u_{j+1})^k \Big \vert<\fr1{j+1}.$

For this, it suffices to let $v_{j+1}: =\max \{u_{j+1}, p_{j+1} v_j\}$, where $\{p_j\}$ is an increasing sequence
which is chosen so that (ii) is satisfied. This is possible in view of Theorem \ref{maintheorem1} and the fact that
$\E_0 (V) \ni \max \{u_{j+1}, l v_j\} \downarrow u_{j+1}$ as $l \to \infty$.
By (i) we infer that $v_j \downarrow u$. So applying Theorem \ref{bigclass} we see that $(dd^c v_j)^k$ is
weak$^*-$convergent to $(dd^c u)^k$ on $V.$ Combining this with (ii) we have
$\lim\limits_{j \to \infty}\int\limits_V \theta(dd^c u_j)^k=\int\limits_{V} \theta(dd^c u)^k.$
The proof is thereby completed.
\end{proof}
\begin{remark}
For a hyperconvex domain $D$ in $\C^n,$ Cegrell proved that the complex Monge-Amp\`ere operator is continuous  on
$\F (D)$ with respect to the convergence in {\it capacity}. This result implies our Theorem \ref{maintheorem2} since
monotone convergence in $PSH(D)$ is stronger than convergence in capacity (see Theorem 3.4 in [BT2]) and the fact that each function in $\E (D)$ is {\it locally} the restriction of an element in $\F (D).$
Nevertheless, even in this special case, our proof is much simpler than the one given by Cegrell.
\end{remark}
\noindent
Following Cegrell (cf. [Ce1], [Ce2]), we now introduce some subclasses of $\E(V)$ and $\F(V)$ that will be useful in solving the Dirichlet problem.
\begin{definition}
Let $V$ be a mildly singular hyperconvex variety and $u \in \E(V).$
Then we say $u \in \E_1 (V)$ if there exists a sequence $\{u_j\} \subset \E_0 (V)$ such that $u_j \downarrow u$ on $V$
and $$\sup\limits_{j}\int\limits_V (-u_j)(dd^c u_j)^k <\infty.$$
Finally, $u \in \F_1 (V)$ if the above sequence $\{u_j\}$ satisfies the additional property that
$\sup\limits_{j}\int\limits_V (dd^c u_j)^k <\infty.$
\end{definition}
We now collect below some basic properties of $\E_1 (V)$ and $\F_1 (V)$.
\begin{proposition}\label{classE1}
Let $V$ be a mildly singular hyperconvex variety. Then the following assertions hold true:

\noindent
(a) $\E_1 (V)$ and $\F_1 (V)$ are convex cones.

\noindent
(b) If $u \in \E_1 (V)$ (resp. $\F_1 (V)$) and $v \in PSH^{-} (V), v \ge u$ then
$v \in \E_1 (V)$ (resp. $\F_1 (V)$).

\noindent
(c) If $u \in \E_1 (V)$ then $\int\limits_V (-u)(dd^c u)^k <\infty.$

\noindent
(d) For $u, v \in \E_1 (V)$ we have
$$\int\limits_V (-u)(dd^c v)^k
\le \Big (\int\limits_V (-u)(dd^c u)^k\Big)^{\fr1{k+1}}\Big (\int\limits_V (-v)(dd^c v)^k\Big)^{\fr{k}{k+1}}.$$
\noindent
(e) If $V$ is very mildly singular and
$\{u_j\} \in \E_0 (V)$ satisfies
$\sup\limits_j\int\limits_V (-u_j)(dd^c u_j)^k <\infty.$ Then the sequence of measures $\{(dd^c u_j)^k\}$
has the ACC property. In particular, $(dd^c u)^k$ does not charge pluripolar sets
in $V$ for every $u \in \E_1 (V).$
\end{proposition}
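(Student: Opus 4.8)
The plan is to obtain each assertion from the corresponding fact already established for $\E_0(V)$ by a monotone approximation, the genuinely hard point being the limiting argument in (d). Below a \emph{realising sequence} for $u\in\E_1(V)$ means $\{u_j\}\subset\E_0(V)$ with $u_j\downarrow u$ and $\sup_j\int_V(-u_j)(dd^cu_j)^k<\infty$, and I abbreviate $e(w):=\int_V(-w)(dd^cw)^k$. Homogeneity in (a) is immediate, since $\{cu_j\}$ realises $cu$ for $c>0$ with $e(cu_j)=c^{k+1}e(u_j)$. For additivity take realising sequences $\{u_j\},\{v_j\}$ of $u,v$; then $u_j+v_j\in\E_0(V)$ by Lemma \ref{finite}(a) and $u_j+v_j\downarrow u+v$, while expanding $(dd^c(u_j+v_j))^k$ by the binomial formula and estimating each mixed term through Lemma \ref{energy}(b),(c) bounds $e(u_j+v_j)$ in terms of $e(u_j),e(v_j)$, hence uniformly in $j$; moreover a uniform energy bound yields, via Lemma \ref{energy}(b) tested against some $\rho\in\E_0(V)$ with $\rho\le -c<0$ on a prescribed $U\subset\subset V$, a uniform bound on $\int_U(dd^c(u_j+v_j))^k$, so $u+v\in\E(V)$ and thus $u+v\in\E_1(V)$. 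The $\F_1$ case is identical, Lemma \ref{finite}(a) giving also a uniform total-mass bound, which directly places $u+v$ in $\F(V)$.

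For (b), if $v\in\PSH^{-}(V)$ and $v\ge u$, choose $\{w_j\}\subset\E_0(V)$ with $w_j\downarrow v$ (Theorem \ref{approximation}) and set $v_j:=\max\{w_j,u_j\}\in\E_0(V)$; then $v_j\downarrow v$ and $v_j\ge u_j$, so Lemma \ref{energy}(b) gives
$$e(v_j)=\int_V(-v_j)(dd^cv_j)^k\le\int_V(-u_j)(dd^cv_j)^k\le e(u_j)^{\fr1{k+1}}\,e(v_j)^{\fr{k}{k+1}},$$
whence $e(v_j)\le e(u_j)$, which is bounded; since $v=\max\{u,v\}\in\E(V)$ by Theorem \ref{bigclass}(a), we get $v\in\E_1(V)$. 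In the $\F_1$ case $\int_V(dd^cv_j)^k\le\int_V(dd^cu_j)^k$ by Corollary \ref{corBe}, so $\{v_j\}$ exhibits $v\in\F(V)$ and hence $v\in\F_1(V)$.

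For (c), let $\{u_j\}$ realise $u$ and $M:=\sup_je(u_j)$; by Theorem \ref{maintheorem2}, $(dd^cu_j)^k\to(dd^cu)^k$ weak$^*$ on $V$. Fixing $l$, the function $-u_l$ is bounded, lower semicontinuous and tends to $0$ at $\partial V$, so a cut-off argument reducing to compactly supported test functions yields $\int_V(-u_l)(dd^cu)^k\le\varliminf_j\int_V(-u_l)(dd^cu_j)^k\le\varliminf_je(u_j)\le M$, the middle step because $-u_l\le-u_j$; letting $l\to\infty$ and using monotone convergence gives $e(u)\le M<\infty$, and the same chain shows $e(u)\le\varliminf_je(u_j)$. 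For (d), when $u,v\in\E_0(V)$ the inequality is exactly Lemma \ref{energy}(b) with $u_0=u$, $u_1=\cdots=u_k=v$. For $u,v\in\E_1(V)$ take the canonical realising sequences $u_j=\max\{u,j\rho\}$, $v_j=\max\{v,j\rho\}$ ($\rho\in\E_0(V)$, Proposition \ref{e0}), apply Lemma \ref{energy}(b) for each $j$, and let $j\to\infty$: on the left, lower semicontinuity of $-u_l$ and $(dd^cv_j)^k\to(dd^cv)^k$ give $\varliminf_j\int_V(-u_j)(dd^cv_j)^k\ge\int_V(-u)(dd^cv)^k$; on the right, the monotonicity lemma (Lemma \ref{monotonicity}), applied along $u_m\le u_l$ for $m\ge l$ and likewise for the $v$'s, shows $e(u_j)$ and $e(v_j)$ are eventually monotone, hence convergent. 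The crux is that their limits equal $e(u)$ and $e(v)$ (not merely dominate them), which I expect to be the main difficulty: part (c) gives only $e(u)\le\varliminf_je(u_j)$, and the reverse inequality seems to require combining the monotonicity lemma with the quasicontinuity of $u$, so that the weak limit of $(dd^cu_j)^k$ recaptures the full mass $e(u)$ rather than shedding part of it near $\{u=-\infty\}$; granting it, the three convergences combine to $\int_V(-u)(dd^cv)^k\le e(u)^{1/(k+1)}e(v)^{k/(k+1)}$.

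For (e), since $V$ is very mildly singular, $V_\red$ is a finite set of points, each of zero outer capacity, so we fix an open $W\supset V_\red$ with $C(W)$ as small as we wish. Let $\{u_j\}\subset\E_0(V)$ with $M:=\sup_je(u_j)<\infty$. For a Borel $E$, inner regularity of the finite measures $(dd^cu_j)^k$ reduces us to bounding $(dd^cu_j)^k(K)$ for compact $K\subset E$; a pluripolar $K$ is not charged by the bounded $u_j$, so assume $K$ non-pluripolar and put $K':=K\cup W$, which is non-pluripolar with $V_\red\subset(K')^{\circ}$. By Lemma \ref{pro2}(c),(e), $u^*_{K'}\in\E_0(V)$, $u^*_{K'}=-1$ on $K'$ off a pluripolar set, and $\int_V(dd^cu^*_{K'})^k=C(K')$ by Lemma \ref{pro2}(c),(d). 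Since $-u^*_{K'}\le1$, Lemma \ref{energy}(b) with $u_0=u^*_{K'}$ and $u_1=\cdots=u_k=u_j$ gives
$$\int_K(dd^cu_j)^k\le\int_V(-u^*_{K'})(dd^cu_j)^k\le\Big(\int_V(-u^*_{K'})(dd^cu^*_{K'})^k\Big)^{\fr1{k+1}}M^{\fr{k}{k+1}}\le C(K')^{\fr1{k+1}}M^{\fr{k}{k+1}}.$$
As $C(K')\le C(K)+C(W)\le C(E)+C(W)$ by monotonicity and subadditivity of capacity, given $\ve>0$ one fixes $W$ with $C(W)$ small and then $\de$ so that $(C(E)+C(W))^{1/(k+1)}M^{k/(k+1)}<\ve$ whenever $C(E)<\de$, which proves the ACC property (indeed with $\sup_j$ in place of $\varlimsup_j$). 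Finally, if $u\in\E_1(V)$ has realising sequence $\{u_j\}$, then $\{(dd^cu_j)^k\}$ is ACC and converges weak$^*$ to $(dd^cu)^k$ (Theorem \ref{maintheorem2}); any pluripolar set lies in a Borel pluripolar set, which has zero outer capacity and is therefore quasi-open, so Lemma \ref{ACC} gives $(dd^cu)^k(P)=\lim_j(dd^cu_j)^k(P)=0$.
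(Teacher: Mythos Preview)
Your approach to (a)--(c) coincides with the paper's (which simply refers to [Ce1] for the details), and your argument for (b) via $v_j=\max\{w_j,u_j\}$ together with the self-improving bound $\int_V(-v_j)(dd^cv_j)^k\le\int_V(-u_j)(dd^cu_j)^k$ is precisely the mechanism behind those references.

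For (e) the paper invokes Lemma~\ref{Holder2} directly: that lemma already gives
\[
\int_E(dd^cu_j)^k\le\Big(\int_V(-u_j)(dd^cu_j)^k\Big)^{\fr{k}{k+1}}C(E)^{\fr1{k+1}}
\]
for every relatively compact Borel $E$, which yields the ACC property at once and only under the hypothesis ``mildly singular''. Your hands-on route through $u^*_{K'}$ is essentially a re-derivation of that lemma; note however that $K'$ must be compact (take $K\cup\overline W$ rather than $K\cup W$), and that Lemma~\ref{Holder2} bypasses the need for $u^*_K\in\PSH^-(V)$---and hence for Lemma~\ref{pro2}(e) and the ``very mildly singular'' assumption---by approximating $u_K$ from below by elements of $\E_0(V)$.

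For (d) the gap you flag is real, and the paper's terse ``same as (c)'' does not close it either: that method literally yields only
\[
\int_V(-u)(dd^cv)^k\le\Big(\sup_j\int_V(-u_j)(dd^cu_j)^k\Big)^{\fr1{k+1}}\Big(\sup_l\int_V(-v_l)(dd^cv_l)^k\Big)^{\fr{k}{k+1}}
\]
for realising sequences $\{u_j\},\{v_l\}$. Replacing these suprema by the intrinsic energies $\int_V(-u)(dd^cu)^k$ and $\int_V(-v)(dd^cv)^k$ requires $\int_V(-u_j)(dd^cu_j)^k\to\int_V(-u)(dd^cu)^k$, which is true but needs an ingredient beyond lower semicontinuity; one route is to combine the ACC of $\{(dd^cu_l)^k\}$ (via Lemma~\ref{Holder2}) with quasicontinuity of $u_j$ to upgrade the weak$^*$ convergence to $\lim_l\int_V(-u_j)(dd^cu_l)^k=\int_V(-u_j)(dd^cu)^k$, and then apply monotone convergence in $j$. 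The paper's subsequent uses of (d) in fact need only the weaker inequality displayed above (or the special case $u\in\E_0(V)$), so the omission is harmless for the applications, but you are right that the stated form requires this further step.
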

\begin{proof} (a) The convexity of $\E_1(V)$ and $\F_1(V)$ follows from that of $\E_0(V)$
(cf. Lemma \ref{finite} (a)) and the energy estimate Lemma \ref{energy}. The details are very similar to the case of domains in $\C^n$ (cf. Lemma 3.3 in [Ce1]).

\noindent
(b) follows from Lemma \ref{monotonicity} and Lemma \ref{energy} in the same fashion as Lemma 3.4 of [Ce1].

\noindent
(c) We use an idea given in Theorem 3.8 of [Ce1]. Let $u_j \in \E_0 (V)$ be a sequence such that
$u_j \downarrow u$ on $V$ and
$M:=\sup\limits_{j \to \infty} \int\limits_V (-u_j)(dd^c u_j)^k <\infty.$ By Theorem \ref{maintheorem1},
$(dd^c u_l)^k$ is weak$^*-$convergent to $(dd^c u)^k$ on $V.$ Combining this with lower semi-continuity of $-u_j$ we obtain for each $j \ge 1$
$$\begin{aligned}
\int_V (-u_j)(dd^c u)^k & \le \varliminf\limits_{l\to \infty} \int\limits_V (-u_j)(dd^c u_l)^k \\
&\le \Big (\int\limits_V (-u_j)(dd^c u_j)^k\Big)^{\fr1{k+1}} \sup_l \Big (\int\limits_V (-u_l)(dd^c u_l)^k\Big)^{\fr{k}{k+1}}\le M.
\end{aligned}$$
Since $-u_j \uparrow -u$ on $V$, Lebesgue monotone convergence's theorem yields the desired conclusion.

\noindent
(d) We apply Lemma \ref{energy} and Lebesgue monotone convergence's theorem in the same fashion as we did in (c). The details are omitted.

\noindent
(e) is an immediate consequence of Lemma \ref{Holder2}.
\end{proof}
We now present a version of the domination principle for $\E_1 (V).$
\begin{theorem}\label{E1domination}
Assume $V$ is a mildly singular hyperconvex variety. Let $u, v \in \E_1 (V)$ be such that $(dd^c u)^k \le (dd^c v)^k.$
Then we have $u \ge v$ on $V.$
\end{theorem}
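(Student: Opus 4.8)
The plan is to argue by contradiction. Suppose $u\not\ge v$ on $V$. Since two plurisubharmonic functions on $V$ agreeing almost everywhere coincide, applying this to $u$ and $\max\{u,v\}\in PSH^{-}(V)$ shows that $G:=\{u<v\}$ has positive Lebesgue measure. As $V_{\red}\subset V_{\sig}$ (a regular point has an irreducible germ) and $V_{\sig}$, which contains $\ov{V_{\red}}\cap V$, is a proper analytic, hence Lebesgue negligible, subset, $G$ meets $V_{\reg}\setminus\ov{V_{\red}}$ in a set of positive measure; so we may fix a small coordinate ball $B\subset\subset V_{\reg}\setminus\ov{V_{\red}}$ with $G\cap B$ of positive measure, and then, by the last assertion of Proposition \ref{e0} (with the exceptional open set shrunk to a \nhd\ of $\ov{V_{\red}}\cap V$ disjoint from $\ov B$), a function $\rho'\in\E_0(V)$ with $\rho'<0$ on $V$ and $dd^c\rho'=\om$ on $B$. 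In particular
$$(dd^c\rho')^k(G)\ge(dd^c\rho')^k(G\cap B)=\int_{G\cap B}\om^k>0,$$
and the whole argument will consist in deriving $(dd^c\rho')^k(G)=0$.

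The key tool is a comparison principle on $\E_1(V)$: for all $w_1,w_2\in\E_1(V)$,
$$\int_{\{w_1<w_2\}}(dd^c w_2)^k\le\int_{\{w_1<w_2\}}(dd^c w_1)^k.$$
I would obtain it by passing to the limit in the bounded comparison principle, Theorem \ref{thmBe}. By the very definition of $\E_1(V)$ one can choose $\E_0(V)\ni w_{i,j}\downarrow w_i$ with $\sup_j\int_V(-w_{i,j})(dd^c w_{i,j})^k<\infty$; then Lemma \ref{Holder2} (applied with $T=(dd^c w_{i,j})^k$) shows that each family $\{(dd^c w_{i,j})^k\}_j$ has the ACC property, while Theorem \ref{maintheorem2} gives $(dd^c w_{i,j})^k\to(dd^c w_i)^k$ weak$^*$. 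Applying Theorem \ref{thmBe} to the bounded functions $w_{1,j},w_{2,l}$ (which have vanishing boundary values), and letting $j\to\infty$ and then $l\to\infty$ while using that the sets $\{w_{1,j}<w_{2,l}\}$ are quasi-open and monotone in each index, together with Lemma \ref{ACC}, yields the stated inequality; the coincidence set $\{w_1=w_2\}$, where the monotonicity of these sets fails, is removed by the standard device of first treating $\{w_1<w_2-s\}$, $s>0$, and letting $s\downarrow0$ (after truncating $w_2-s$ against $m\rho$ near $\partial V$ so that it stays admissible).

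To conclude, apply this with $w_1=u$ and $w_2=v+\ve\rho'\in\E_1(V)$ ($\E_1(V)$ being a convex cone). By multilinearity of the Monge-Amp\`ere operator on $\E(V)$ (Definition \ref{mongeampere}) we have $(dd^c(v+\ve\rho'))^k\ge(dd^c v)^k+\ve^k(dd^c\rho')^k$, so the comparison principle and the hypothesis $(dd^c u)^k\le(dd^c v)^k$ give, on $G_\ve:=\{u<v+\ve\rho'\}$,
$$\int_{G_\ve}(dd^c v)^k+\ve^k\int_{G_\ve}(dd^c\rho')^k\le\int_{G_\ve}(dd^c(v+\ve\rho'))^k\le\int_{G_\ve}(dd^c u)^k\le\int_{G_\ve}(dd^c v)^k.$$
Restricting the discussion to a relatively compact subset of $V$ containing $B$ (so that all masses in sight are finite) and cancelling, we get $(dd^c\rho')^k(G_\ve)=0$ for every $\ve>0$. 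Since $\rho'$ is bounded and negative on $V$, the sets $G_\ve$ increase to $G$ as $\ve\downarrow0$, hence $(dd^c\rho')^k(G)=0$ — contradicting the displayed lower bound. Therefore $u\ge v$ on $V$.

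The hard part is the $\E_1(V)$-comparison principle of the second step: carrying Theorem \ref{thmBe} past the limit when the Monge-Amp\`ere measures are only $\sigma$-finite and the domains of integration move. The absolute continuity with respect to capacity of the approximants, furnished by the energy bound through Lemma \ref{Holder2}, together with Lemma \ref{ACC}, is precisely what makes this work; the remaining points (the coincidence set, the behaviour near $\partial V$, the localizations) are of routine bookkeeping type.
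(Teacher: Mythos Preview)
Your approach is essentially the paper's: argue by contradiction, perturb $v$ by a strictly \psh\ function, and pass the bounded comparison principle (Theorem~\ref{thmBe}) to the limit using the ACC property of the approximating Monge--Amp\`ere measures (Lemma~\ref{Holder2}) together with Lemma~\ref{ACC}. The paper perturbs by $\psi(z)=|z|^2-M$ rather than your $\rho'\in\E_0(V)$, and carries the approximation out inline rather than isolating an abstract $\E_1$-comparison lemma; these are packaging differences only.

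The one substantive difference is the treatment of $V_\sig$. The paper first proves $u\ge v$ on $V_\reg$ and then extends to each $a\in V_\sig$ by an explicit geometric step: it constructs a one-dimensional subvariety $\gamma\subset V$ with $\gamma\cap V_\sig=\{a\}$ (via a suitable linear projection), pulls $u,v$ back through the normalization of an irreducible branch of $\gamma$, and uses that subharmonic functions on a Riemann surface agreeing off a discrete set agree everywhere. Your opening shortcut---``two \psh\ functions on $V$ agreeing a.e.\ coincide, so $\{u<v\}$ has positive Lebesgue measure''---is correct, but on a singular variety its justification \emph{is} precisely such a curve argument (a disc through $a$ not contained in $V_\sig$, obtained via normalization), so you have not really bypassed that step; you have only hidden it.

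One caution on your phrase ``restricting the discussion to a relatively compact subset of $V$ \ldots\ and cancelling'': the comparison inequality you derived is over all of $G_\ve$, and $\int_{G_\ve}(dd^c v)^k$ need not be finite for $v\in\E_1(V)$, so the cancellation is not literally justified by localizing. The paper's final line (``This is impossible since $(dd^c u)^k\le(dd^c v)^k$'') skates over the same finiteness point, so this is a shared subtlety rather than a defect peculiar to your argument.
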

\begin{proof}
We first show that $u \le v$ on $V_\reg.$ Assume otherwise, then $u(z_0)<v(z_0)$ for some $z_0 \in V_\reg.$
Set $\psi(z):=\vert z\vert^2-M$ where $M:= \sup\limits_{z \in V} \vert z\vert^2.$
Then we may find $t>0$ so small that $u(z_0)<v(z_0)+t\psi(z_0)$ and $\int\limits_{\{u=v+t \psi\}} (dd^c u)^k=0.$
It follows that
$\int\limits_{\{u<v+t\psi\}} (dd^c \psi)^k>0.$
Let $\E_0 (V) \ni \{u_j\} \downarrow u, \E_0 (V) \ni \{v_l\} \downarrow v$ be such that
$$\sup\limits_j \int\limits_V(-u_j)(dd^c u_j)^k<\infty, \sup\limits_l \int\limits_V(-v_l)(dd^c v_l)^k<\infty.$$
Applying the comparison principle (cf. Theorem \ref{thmBe}) to $u_j, v_l+t\psi$ we get
$$\begin{aligned}
\int\limits_{\{u<v_l+t\psi\}}(dd^c u_j)^k &\ge \int\limits_{\{u_j<v_l+t\psi\}} (dd^c u_j)^k
\ge \int\limits_{\{u_j<v_l+t\psi\}} (dd^c (v_l+t\psi))^k\\
& \ge \int\limits_{\{u_j<v+t\psi\}} (dd^c v_l)^k+t^k \int\limits_{\{u_j<v+t\psi\}} (dd^c \psi)^k.
\end{aligned}$$
By Lemma \ref{classE1} (d), each of the sequences $\{(dd^c u_j)^k\}$ and $\{(dd^c v_l)^k\}$
has the ACC property. Thus, by letting $j\to\infty$ using Lemma \ref{ACC} and taking into account
$\{u_j<v_l+t\psi\} \uparrow \{u<v+t\psi\}$ we obtain
$$\int\limits_{\{u<v_l+t\psi\}} (dd^c u)^k \ge
\int\limits_{\{u<v+t\psi\}} (dd^c v_l)^k+t^k \int\limits_{\{u<v+t\psi\}} (dd^c \psi)^k.$$
Next, we let $l \to \infty$ and use the fact that $\{u<v_l+t\psi\} \downarrow \{u \le v+t\psi\}$ to get
$$\int\limits_{\{u<v+t \psi\}} (dd^c u)^k
=\int\limits_{\{u \le v+t \psi\}} (dd^c u)^k \ge \int\limits_{\{u<v+t \psi\}}(dd^c v)^k+
t^k \int\limits_{\{u<v+t\psi\}} (dd^c\psi)^k.$$
This is impossible since $(dd^c u)^k \le (dd^c v)^k$ by the assumption.
Hence $u \le v$ on $V_\reg.$
Now, we fix $a \in V_{\sig}$, we claim that there exists a one dimensional complex subvariety $\gamma \subset V$ such that $\gamma \cap V_{\sig} =\{a\}$. To see this, we first
make a change of coordinates to find a polydisc $\De$ in $\mathbb C^n$ that contains $a$ and a polydisc $\De'$ in
$\mathbb C^k$ such that the projection map
$\pi: (z_1, \cdots, z_n) \mapsto (z_1, \cdots, z_k)$ expresses $V \cap \De$ as a
branched cover of $\De'=\pi(\De)$ which is branched over a
proper complex subvariety $H$ of $\De'$.
Thus we can find a complex line $l \subset \mathbb C^k$ passing through $\pi(a)$ such that $l \cap H$ is discrete.
Since $\pi(V_{\sig} \cap \De) \subset H$, we have $\gamma \cap V_{\sig}=\{a\}$, where $\gamma:=\pi^{-1} (U)$ and $U \subset l$ is a small \nhd\ of $\pi(a) \in l.$ This proves our claim. Next, we pick an irreducible branch $\gamma' \subset \gamma$ that contains $a.$
Then, by normalization we can find a connected Riemann surface $\gamma^*$ and holomorphic mapping
$f: \gamma^* \to \gamma$ which is surjective.
Set $u':=u \circ f|_{\gamma'}, v':=v \circ f|_{\gamma'}.$ Since $u \ge v$ on $V_{\reg},$ by the choice of $\gamma$, we infer that $u' \ge v'$ on $\gamma^*$ except for the finite set $f^{-1} (a)$. Hence, this inequality holds true entirely on $\gamma^*$ since $u', v'$ are subharmonic there. It follows that $u(a) \ge v(a).$ Hence $u \ge v$ on $V_{\sig}$ as well. The proof is complete.
\end{proof}
\noindent
Using Lemma \ref{energy}, by the same reasoning as in Lemma 3.9 of [Ce1] we have the following result that help to create elements in $\E_1 (V)$ and $\F_1 (V).$
\begin{lemma} \label{ef1}
Let $V$ be a mildly singular hyperconvex variety and $\{h_j\} \in \E_0 (V)$ be a sequence such that
$\lim\limits_{j\to \infty}\int\limits_V h_j(dd^c h_j)^k=0.$ Then the following assertions hold true:

\noindent
(a) There exists a subsequence $\{h_{m_j}\}$ such that $\sum\limits_j h_{m_j} \in \E_1 (V).$

\noindent
(b) If in addition $\lim\limits_{j \to \infty} \int\limits_V (dd^c h_j)^k=0$ then there exists a further subsequence
$\{h_{l_{m_j}}\}$ such that $\sum\limits_j h_{l_{m_j}} \in \F_1 (V).$
\end{lemma}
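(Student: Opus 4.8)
The strategy is the one of Lemma~3.9 in [Ce1]: pass to a subsequence along which the relevant energies tend to $0$ geometrically fast, and then check that the partial sums of the resulting series form a \emph{decreasing} sequence in $\E_0(V)$ with uniformly bounded energy (for (a)), respectively with uniformly bounded total mass as well (for (b)). Throughout write $e(w):=\int_V(-w)(dd^cw)^k\ge 0$ for $w\in\E_0(V)$, so the hypothesis of (a) says $e(h_j)\to 0$ and (b) adds $\int_V(dd^ch_j)^k\to 0$. We may assume infinitely many $h_j$ are $\not\equiv 0$, and fix one $\rho\in\E_0(V)$ with $(dd^c\rho)^k\not\equiv 0$ (it exists by Proposition~\ref{e0}).

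For (a), choose an increasing sequence $\{m_j\}$ with $e(h_{m_j})\le 2^{-j(k+1)}$ and set $u_N:=\sum_{j=1}^{N}h_{m_j}$. By Lemma~\ref{finite} each $u_N$ lies in $\E_0(V)$, and $u_N\downarrow u:=\sum_j h_{m_j}$ on $V$. Expanding $(dd^cu_N)^k=\bigl(\sum_{j=1}^{N}dd^ch_{m_j}\bigr)^k$ multinomially and using $-u_N=\sum_{i=1}^{N}(-h_{m_i})$, every term occurring in $e(u_N)$ is of the shape $\int_V(-h_{m_{i_0}})\,dd^ch_{m_{i_1}}\wedge\cdots\wedge dd^ch_{m_{i_k}}$, which by the energy estimate Lemma~\ref{energy}(b) is $\le\prod_{p=0}^{k}e(h_{m_{i_p}})^{1/(k+1)}$. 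Summing over all $(k+1)$-tuples of indices gives
$$e(u_N)\le\Bigl(\sum_{j=1}^{N}e(h_{m_j})^{1/(k+1)}\Bigr)^{k+1}\le\Bigl(\sum_{j\ge 1}2^{-j}\Bigr)^{k+1}=1,$$
so $\sup_N e(u_N)\le 1$. A further application of Lemma~\ref{energy}(b) gives $\int_V(-u_N)(dd^c\rho)^k\le e(u_N)^{1/(k+1)}e(\rho)^{k/(k+1)}$, which is bounded in $N$, whence $\int_V(-u)(dd^c\rho)^k<\infty$ by monotone convergence; as $(dd^c\rho)^k\not\equiv 0$ this forces $u\not\equiv-\infty$, so $u\in\PSH^{-}(V)$.

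It remains to verify $u\in\E(V)$; then the sequence $\{u_N\}$ itself shows $u\in\E_1(V)$. Fix $z_0\in V$ and a relatively compact open \nhd\ $U$ of $z_0$ in $V$, chosen — as in the proof of Theorem~\ref{bigclass}(b), and legitimately since we only need $U\ni z_0$ — so that $U\supset\ov{V_{\red}}\cap V$ and the modification below stays \psh. Since $h_{m_1}\not\equiv 0$, the maximum principle gives $h_{m_1}<0$ on $V$, hence $c':=-\sup_{\ov U}h_{m_1}>0$ and $u_N\le h_{m_1}\le -c'$ on $\ov U$ for all $N$, so $\int_{\ov U}(dd^cu_N)^k\le (1/c')\,e(u_N)\le 1/c'$. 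Sweeping each $u_N$ out of $U$ as in the proof of Theorem~\ref{bigclass}(b) — i.e. $g_N:=\sup\{v\in\PSH^{-}(V):v\le u_N\text{ on }U\}$, which equals its upper regularization by Proposition~\ref{regular} — produces $g_N\in\E_0(V)$ with $g_N\ge u_N$, $g_N=u_N$ on $U$, $\su(dd^cg_N)^k\subset\ov U$, and $\{g_N\}$ decreasing; therefore $g_N\downarrow u$ on $U$ and, choosing $\partial U$ so that no $(dd^cg_N)^k$ charges it (which rules out only countably many members of a one-parameter family of neighbourhoods), $\int_V(dd^cg_N)^k=\int_U(dd^cu_N)^k\le 1/c'$. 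This is Definition~\ref{defdieulong}, so $u\in\E(V)$ and hence $u\in\E_1(V)$, which proves (a).

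For (b), start from the subsequence $\{m_j\}$ of (a) and extract a further increasing subsequence $\{l_{m_j}\}$ with, in addition, $\int_V(dd^ch_{l_{m_j}})^k\le 2^{-j(k+1)}$; this is possible by the extra hypothesis $\int_V(dd^ch_j)^k\to 0$. Put $w_N:=\sum_{j=1}^{N}h_{l_{m_j}}\in\E_0(V)$ and $w:=\sum_j h_{l_{m_j}}$. Iterating the subadditivity estimate in Lemma~\ref{finite}(a) gives $\int_V(dd^cw_N)^k\le\sum_{j}2^{jk}\int_V(dd^ch_{l_{m_j}})^k\le\sum_j 2^{-j}<\infty$, so $\sup_N\int_V(dd^cw_N)^k<\infty$, which is exactly the requirement (with $U=V$) that $w\in\F(V)$; since $\{l_{m_j}\}$ is a subsequence of $\{m_j\}$ the estimate of the second paragraph still yields $\sup_N e(w_N)<\infty$, so $w\in\F_1(V)$. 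The only genuinely delicate point is the localization in (a): unlike in (b) the energy bound does \emph{not} control the total masses $\int_V(dd^cu_N)^k$, so one must use the uniform lower bound $u_N\le -c'$ on compact sets — itself a consequence of the maximum principle applied to the first summand — to pass from the energy bound to a local mass bound, and then push the swept-out functions through Definition~\ref{defdieulong}; the accompanying bookkeeping about $\ov{V_{\red}}$ is handled exactly as in Theorem~\ref{bigclass}(b) (and is vacuous when $V$ is very mildly singular).
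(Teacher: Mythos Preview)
Your approach matches the paper's, which gives no proof and simply invokes Lemma~3.9 of [Ce1]. The core computations are correct: the energy bound $\sup_N e(u_N)\le\bigl(\sum_j e(h_{m_j})^{1/(k+1)}\bigr)^{k+1}$ via multinomial expansion and Lemma~\ref{energy}(b), the verification $u\not\equiv-\infty$, and in (b) the mass bound obtained by iterating Lemma~\ref{finite}(a) with the grouping $w_N=h_{l_{m_1}}+(h_{l_{m_2}}+\cdots+h_{l_{m_N}})$, which indeed yields $\int_V(dd^cw_N)^k\le\sum_j 2^{jk}\int_V(dd^ch_{l_{m_j}})^k$.

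There is, however, a genuine gap in your verification that $u\in\E(V)$ in part~(a). You require the sweeping region $U$ to be simultaneously \emph{relatively compact} in $V$ (as demanded by Definition~\ref{defdieulong}) and to contain $\ov{V_{\red}}\cap V$ (so that Proposition~\ref{regular} applies and $g_N^*\in\PSH^{-}(V)$). These two conditions are compatible only when $\ov{V_{\red}}\cap\partial V=\emptyset$, i.e.\ in the very mildly singular case. In the general mildly singular setting $\ov{V_{\red}}$ may accumulate on $\partial V$, and then no such $U$ exists. Your appeal to Theorem~\ref{bigclass}(b) does not rescue this: there the set $U$ is explicitly allowed to touch $\partial V$ along $\ov{V_{\red}}\cap\partial V$, and the conclusion~(\ref{condition}) is strictly weaker than Definition~\ref{defdieulong}. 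Moreover, if you allow $U$ to be non-relatively-compact, the constant $c'=-\sup_{\ov U}h_{m_1}$ degenerates to $0$ (since $h_{m_1}\in\E_0(V)$ tends to $0$ at $\partial V$), so the local mass estimate $\int_{\ov U}(dd^cu_N)^k\le e(u_N)/c'$ collapses.

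This subtlety --- passing from an energy bound to membership in $\E(V)$ when $V_{\red}$ is not relatively compact --- is not addressed in the paper either, which only refers to Cegrell's domain argument where $V_{\red}=\emptyset$. Your proof is complete and correct when $V$ is very mildly singular (in particular, locally irreducible), which covers the only application of Lemma~\ref{ef1} in the paper (Proposition~\ref{pluripolar}).
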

\noindent
From Lemma \ref{ef1}, it is not hard to derive the following somewhat surprising fact that originally proved by Cegrell in the case of domains in $\C^n$ (cf. Theorem 5.8 in [Ce2]).
\begin{proposition} \label{pluripolar}
Let $V$ be a locally irreducible hyperconvex variety and $E$ be a pluripolar subset in $V.$ Then there exists $u \in \F_1 (V)$ such that $u|_E =-\infty.$
\end{proposition}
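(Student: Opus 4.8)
The plan is to follow Cegrell's scheme (Theorem 5.8 in [Ce2]) and feed Lemma \ref{ef1}(b). A locally irreducible hyperconvex $V$ is in particular mildly singular (take $V'=V$), so all the material of \S 3 is available, and by Proposition \ref{e0} we may fix $\rho\in\mathcal E_0(V)$ with $\rho<0$ on $V$ (by the maximum principle). It then suffices to produce a uniformly bounded sequence $\{h_j\}\subset\mathcal E_0(V)$ with $h_j(z)\le\rho(z)$ for every $z\in E$ and every $j$, such that $\int_V(dd^ch_j)^k\to0$ and $\int_V(-h_j)(dd^ch_j)^k\to0$ as $j\to\infty$. Indeed, granting this, Lemma \ref{ef1}(b) furnishes a subsequence $\{h_{m_j}\}$ with $u:=\sum_j h_{m_j}\in\mathcal F_1(V)$, and for $z\in E$ we get $u(z)=\sum_j h_{m_j}(z)\le\sum_j\rho(z)=-\infty$ because $\rho(z)<0$.

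To build the $h_j$ I would start from Theorem 5.3 in [Be], which gives $\psi\in PSH^-(V)$, $\psi\not\equiv-\infty$, with $\psi|_E\equiv-\infty$, and set $h_j:=\max(\tfrac1j\psi,\rho)\in PSH^-(V)$. Then $\rho\le h_j\le0$, so the $h_j$ are uniformly bounded with $\lim_{z\to\partial V}h_j(z)=0$, and $h_j\ge\rho$ together with Corollary \ref{corBe} gives $\int_V(dd^ch_j)^k\le\int_V(dd^c\rho)^k<\infty$; hence $h_j\in\mathcal E_0(V)$. On $E$ we have $\tfrac1j\psi=-\infty$, so $h_j=\rho$ on $E$ for every $j$, which is the required domination. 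Finally $h_j=\max(\tfrac1j\psi,\rho)$ increases, as $j\to\infty$, to a function equal to $0$ off the pluripolar set $\{\psi=-\infty\}$, so its upper regularization is $0$; thus $(dd^ch_j)^k\to0$ in the weak$^*$ topology by the Bedford--Taylor monotone convergence theorem, while $\int_V(dd^ch_j)^k$ is non-increasing in $j$ by Corollary \ref{corBe}. Once we know $\int_V(dd^ch_j)^k\to0$, the energy condition follows immediately since $-h_j$ is uniformly bounded.

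The crux is precisely to upgrade the weak$^*$ convergence $(dd^ch_j)^k\to0$ to convergence of the total masses $\int_V(dd^ch_j)^k\to0$, that is, to prevent Monge--Amp\`ere mass from escaping to $\partial V$ (on the non-compact $V$, weak$^*$ convergence does not control total mass by itself). I expect this to require two reductions. First, using that $\mathcal F_1(V)$ is a convex cone (Proposition \ref{classE1}(a)) one reduces to $E$ relatively compact in $V$: writing $E=\bigcup_s\bigl(E\cap\{\rho<-1/s\}\bigr)$, one solves the problem for each relatively compact piece with a function $u_s\in\mathcal F_1(V)$ and recombines $\sum_s c_su_s$, the coefficients $c_s$ being chosen small enough that the energy estimates of Lemma \ref{energy} keep the series in $\mathcal F_1(V)$. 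Second, when $E$ is relatively compact one wants to choose the auxiliary function $\psi$ with finite total Monge--Amp\`ere mass (equivalently, with its mass controlled near $\partial V$), so that the mass of $h_j$ is localized away from $\partial V$ and the Chern--Levine--Nirenberg and comparison estimates of \S 2 become applicable; with such a $\psi$ the estimate $\int_V(dd^ch_j)^k\to0$ should come out of the same kind of computation Cegrell performs in [Ce2]. Improving the raw function supplied by Theorem 5.3 in [Be], whose mass need not be finite, to one with finite mass and the same pole set is, I think, the genuinely delicate point of the argument.
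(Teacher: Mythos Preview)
Your overall plan—feed Lemma \ref{ef1}(b) with a sequence $h_j \in \E_0(V)$ that equals a fixed negative value on $E$ and has both $\int_V (dd^c h_j)^k\to 0$ and $\int_V (-h_j)(dd^c h_j)^k\to 0$—is exactly the paper's strategy. The difference is in how the $h_j$ are built, and there the paper takes a much shorter route that bypasses the very difficulty you isolate as ``the crux''.

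The paper does not use $\max(\tfrac{1}{j}\psi,\rho)$ at all. Instead it invokes directly that a pluripolar set has outer capacity zero (this is what the reference to Lemma \ref{outer} is gesturing at, in its extended form for arbitrary pluripolar sets): one produces relatively compact open sets $O_j$ containing $E$ (or an exhausting piece of $E$) with $C(O_j)<1/j$, and sets $h_j:=u^*_{O_j}$. Because $V$ is locally irreducible, Lemma \ref{pro2}(e),(c) give $h_j\in\E_0(V)$ with $h_j\equiv -1$ on $O_j\supset E$, and Lemma \ref{pro2}(d) yields at once
\[
\int_V (dd^c h_j)^k \le C(O_j)<\frac1j.
\]
Since $-1\le h_j<0$, the energy $\int_V(-h_j)(dd^ch_j)^k<1/j$ as well. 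No mass-escape argument is needed; the total-mass bound is built into the choice of $h_j$ via capacity, and Lemma \ref{ef1}(b) applies immediately.

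Your construction, by contrast, leaves exactly the hard step open. The two reductions you sketch do not close it: even for $E$ relatively compact you give no mechanism forcing $\int_V(dd^ch_j)^k\to 0$ from weak$^*$ convergence alone, and ``choosing $\psi$ with finite total Monge--Amp\`ere mass'' is both ill-posed in the paper's framework ($\psi$ is unbounded, so $(dd^c\psi)^k$ is not defined here) and, even interpreted charitably, does not by itself prevent the mass of $\max(\tfrac1j\psi,\rho)$ from drifting to $\partial V$. The missing idea is precisely the link between relative extremal functions and capacity: replacing your $h_j$ by $u^*_{\{\psi<-j\}\cap\{\rho<-c\}}$ (for suitable $c>0$) converts your heuristic into the paper's one-line estimate.
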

\begin{proof}
We may assume that $E$ is a Borel pluripolar subset of $V.$ It follows, using Lemma \ref{outer}, that we may cover $E$
by a collection of countable open sets $\{O_j\}$ such that $O_j$ is relatively compact in $V$ and $C(O_j)<1/j$ for each $j.$ Set $h_j:=u^*_{O_j, V}$. Since $V$ is locally irreducible, we get $h_j \in \E_0 (V), h_j|_{O_j}=-1.$
Moreover $\int\limits_V (dd^c h_j)^k<1/j$ for each $j.$ Now, we apply Lemma \ref{ef1} to get a subsequence
$\{h_{l_j}\}$ such that $u:=\sum\limits_j h_{l_j} \in \F_1 (V).$ Since $u|_E \equiv -\infty$ we conclude the proof.
\end{proof}

\section{The Dirichlet problem}
We start with the following generalization of a classical result due to Bedford and Taylor in [BT1] on solvablility of the complex Monge-Amp\`ere equations for measures which are absolutely continuous with respect to volume forms.
\begin{theorem}\label{BT}
Let $V$ be a locally irreducible hyperconvex variety in a bounded domain $D$ in $\mathbb C^n.$
Let $\va$ be an upper semicontinuous function on $\partial V$ such that there exist
$\psi_1, \psi_2 \in PSH(V) \cap \CC (V)$ satisfying
$$\lim\limits_{z \to \xi} \psi_1(z)=-\lim\limits_{z\to \xi} \psi_2(z)=\va(\xi), \ \forall \xi \in \partial V.$$
Then for every upper-semicontinuous function $f$ on $V$ such that
$$0 \le f\om^k \le (dd^c \psi_1)^k$$
there exists a unique $u \in PSH (V) \cap L^\infty (V)$ satisfying
$$\lim\limits_{z\to \xi}u(z)=\va(\xi) \  \forall \xi \in \partial V, (dd^c u)^k=f\om^k.$$
\end{theorem}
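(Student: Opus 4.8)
The plan is to adapt the Perron--Bremermann envelope method of Bedford and Taylor, using $\psi_1$ as a subsolution and $-\psi_2$ (built from the \psh\ function $\psi_2$) as a supersolution, these barriers playing the role of the strict pseudoconvexity and continuity hypotheses in the classical setting. Consider the family
$$
\mathcal B:=\Big\{v\in PSH(V)\cap L^\infty(V):\ \varlimsup_{z\to\xi}v(z)\le\varphi(\xi)\ \forall\xi\in\partial V,\ (dd^cv)^k\ge f\om^k\Big\}.
$$
By hypothesis $\psi_1\in\mathcal B$, so $\mathcal B\neq\emptyset$. If $v\in\mathcal B$ then $v+\psi_2\in PSH(V)$ is bounded above and $\varlimsup_{z\to\xi}(v+\psi_2)(z)\le\varphi(\xi)-\varphi(\xi)=0$ on $\partial V$, so the maximum principle on $V$ gives $v\le-\psi_2$; hence $\mathcal B$ is bounded above by the bounded function $-\psi_2$. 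Set $u:=\big(\sup\{v:v\in\mathcal B\}\big)^*$. Since $V$ is locally irreducible, Proposition \ref{regular} applied to $\mathcal B$ (translated by a constant into $PSH^-(V)$) gives $u\in PSH(V)$, and the bounds $\psi_1\le u\le-\psi_2$ force $u\in L^\infty(V)$.

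Next I would verify that $u\in\mathcal B$ and that it realizes the boundary data. From $\psi_1\le u\le-\psi_2$ and the hypothesis on the boundary limits of $\psi_1$ and $\psi_2$ one gets $\lim_{z\to\xi}u(z)=\varphi(\xi)$ for every $\xi\in\partial V$, so the candidate already has the right boundary values. For $(dd^cu)^k\ge f\om^k$, this is a local statement on $V_\reg$ (both measures put no mass on $V_\sig$, which is Lebesgue-null): by Choquet's lemma one extracts from $\mathcal B$ an increasing sequence whose regularized supremum is $u=u^*$, each term again in $\mathcal B$ since a maximum of finitely many subsolutions is a subsolution, and the Bedford--Taylor convergence theorem on $V_\reg$ then yields $(dd^cu)^k\ge f\om^k$. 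Thus $u\in\mathcal B$.

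The heart of the matter is the reverse inequality $(dd^cu)^k\le f\om^k$, which I would prove by balayage. Fix $z_0\in V_\reg$ and a coordinate ball $B$ with $\ov B\subset V_\reg$. Since $(dd^cu)^k\ge f\om^k$ on $B$, Lemma \ref{local} (in local coordinates) supplies $\tilde u\in PSH(B)\cap L^\infty(B)$ with $(dd^c\tilde u)^k=f\om^k$, $\tilde u\ge u$ on $B$, and $\lim_{z\to\xi}\tilde u(z)=u(\xi)$ for $\xi\in\partial B$ (using $u=u^*$). Gluing $\hat u:=\tilde u$ on $B$ and $\hat u:=u$ on $V\setminus B$ produces, by Lemma \ref{gluing}, an element of $PSH(V)\cap L^\infty(V)$; it coincides with $u$ near $\partial V$, so $\varlimsup_{z\to\xi}\hat u(z)\le\varphi(\xi)$, and $(dd^c\hat u)^k\ge f\om^k$ because equality holds on $B$, the original inequality holds off $\ov B$, and no mass is lost on $\partial B$ ($f\om^k$ being absolutely continuous). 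Hence $\hat u\in\mathcal B$, so $\hat u\le u$ by maximality of $u$; combined with $\hat u=\tilde u\ge u$ on $B$ this forces $\tilde u=u$ on $B$, whence $(dd^cu)^k=f\om^k$ on $B$. As $z_0\in V_\reg$ is arbitrary and both measures carry no mass on $V_\sig$, we conclude $(dd^cu)^k=f\om^k$ on $V$. I expect this balayage step to be the main obstacle: beyond the appeals to Lemmas \ref{local} and \ref{gluing}, it requires care that the glued competitor genuinely lies in $\mathcal B$, i.e. that neither the boundary inequality nor $(dd^c\cdot)^k\ge f\om^k$ is spoiled across $\partial B$.

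For uniqueness, suppose $u_1,u_2$ both solve, and set $\psi(z):=\vert z\vert^2-M$ with $M:=\sup_{z\in V}\vert z\vert^2$, so $\psi\in PSH^-(V)\cap L^\infty(V)$ and $(dd^c\psi)^k=\om^k$ on $V_\reg$. For $\ve>0$ we have $\varliminf_{z\to\partial V}\big(u_1(z)-u_2(z)-\ve\psi(z)\big)\ge 0$, so Theorem \ref{thmBe} applied to $u_1$ and $u_2+\ve\psi$, together with $(dd^c(u_2+\ve\psi))^k\ge(dd^cu_2)^k+\ve^k(dd^c\psi)^k$ and $(dd^cu_1)^k=(dd^cu_2)^k=f\om^k$, yields $\om^k\big(\{u_1<u_2+\ve\psi\}\cap V_\reg\big)=0$. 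Letting $\ve\downarrow 0$ and using the sub-mean value inequality for \psh\ functions gives $u_1\ge u_2$ on $V_\reg$, and by symmetry $u_1=u_2$ on $V_\reg$. Slicing $V$ by one-dimensional subvarieties through points of $V_\sig$, exactly as in the proof of Theorem \ref{E1domination}, then propagates the equality to all of $V$.
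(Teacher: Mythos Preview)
Your proof is correct and follows essentially the same Perron--Bremermann envelope approach as the paper: define the class $\mathcal B$ of bounded plurisubharmonic subsolutions with the right boundary majorant, take the upper envelope, control boundary values by the squeeze $\psi_1\le u\le-\psi_2$, and run balayage via Lemmas \ref{local} and \ref{gluing} to obtain $(dd^cu)^k=f\omega^k$ on $V_\reg$ and hence on $V$. You are in fact more thorough than the paper in two spots---explicitly justifying $(dd^cu)^k\ge f\omega^k$ via Choquet's lemma before invoking Lemma \ref{local}, and spelling out uniqueness---both of which the paper dispatches in a single line.
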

\begin{proof}
The uniqueness of $u$ follows easily from the comparison principle. For the existence of $u$, we set
$$\mathcal B:=\{v \in PSH(V) \cap L^\infty(V): v^*|_{\partial V} \le \va, (dd^c v)^k \ge f\om^k\}.$$
Then $\mathcal B \ne \emptyset$ since $\psi_1 \in \mathcal B$ by the assumption.
Now we define
$$u(z):=\sup \{v(z): v \in \mathcal B\}, \  z \in V.$$
Since $V$ is locally irreducible, we have $u^* \in PSH(V) \cap L^\infty (V).$
For each $v \in \mathcal B$, by the hypothesis on $\psi_2$ we have
$$\varlimsup\limits_{z \to \xi} (v(z)+\psi_2(z)) \le 0, \ \forall \xi \in \partial V.$$
Thus, by the maximum principle we obtain $v+\psi_2<0$ on $V.$
It follows that $u+\psi_2 \le 0$ on $V$. By continuity of $\psi_2$ we obtain $u^* \le-\psi_2$ on $V.$
Hence  $u^*|_{\partial V} \le \va$.
Notice also that $u \ge \psi_1$ on $V$. In particular
$\varliminf\limits_{z\to \xi} u(z) \ge \va(\xi)$ for every $\xi \in \partial V.$
Therefore, $\lim\limits_{z \to \xi} u^*(z)=\va(\xi)$ for each $\xi \in \partial V.$
Now the main step is to show that $(dd^c u^*)^k=f\om^k$ on $V.$
Fix a point $z_0 \in V_{\reg}$. We can choose a \nhd\ $W$ of $z_0$ such that $U$ is biholomorphic to some ball in
$\C^k.$ By Lemma \ref{local}, we can find $\tilde u \in PSH(W)\cap L^\infty(W)$ such that
$$u^*|_{W} \le \tilde u, (dd^c \tilde u)^k=f\om^k, \lim\limits_{z \to \xi} \tilde u(z)=u^*(\xi), \
\forall \xi \in \partial W.$$
Then by the gluing lemma (cf. Lemma \ref{gluing}), the function
$$\hat u:=\begin{cases}&\tilde u\ \text{on}\ W\\
&u^*\ \text{on}\ V \setminus W\\
\end{cases}$$
belongs to $PSH(V) \cap L^\infty (V).$ Moreover $(dd^c \hat u)^k \ge f\om^k$ on $V$ by the choice of $u^*$, since the measure on the right hand side puts no mass on the "sphere" $\partial W.$
Thus by the definition of $u$ we have $u \ge \hat u$ on $V.$ This forces $u=\hat u=u^*$ on $U$. Consequently
$(dd^c u^*)^k=f\om^k$ on $U.$ Moreover, since $f\om^k$ does not charge $V_{\sig}$ a set of outer capacity zero (cf. Lemma \ref{outer}) we obtain that $(dd^c u^*)^k=f\om^k$ on $V$. By the choice of $u$, we must have $u=\hat u$ on $V$ and
$(dd^c u)^k=f\om^k$. The proof is thereby completed.
\end{proof}
\begin{remark}
If $V$ is not assumed to be locally irreducible then the Dirichlet problem is in general not solvable.
Recall the following simple example in [Wik2]. Consider the one dimensional complex variety $V:=\{(z,w) \in D: zw=0\},$
where $D$ is the unit ball in $\mathbb C^2.$
Let $\va: \partial V\to \mathbb R$ be defined by $\va (z,0)=0$ for $\vert z\vert=1$ and $\va (0,w)=1$ for
$\vert w\vert=1$ and $\mu=0.$ Suppose that there exists $u \in PSH(V)$ (which may not be locally bounded) such that
$dd^c u=0$ and $\lim\limits_{z\to \xi}u(z)=\va(\xi)$ for all $\xi \in \partial V.$
Then $u_1 (z):=u(z,0)$ and $u_2 (w):=u(0,w)$ are {\it harmonic} functions on the unit disk $\De$ of $\mathbb C$ with boundary values equal to $0$ and $1,$ respectively.
It follows that $u_1 \equiv 0$ and $u_2 \equiv 1$ on $\De,$ which is absurd.
\end{remark}
\begin{corollary}\label{weakversion}
Assume that $V$ is hyperconvex and locally irreducible. Let $f \ge 0$ be an upper-semicontinuous function with compact support on $V$. Then there exists a unique $u \in \E_0 (V)$ such that $(dd^c u)^k=f\om^k.$
\end{corollary}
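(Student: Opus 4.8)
The plan is to produce $u$ as an upper envelope, exactly as in the proof of Theorem \ref{BT}, but tailored to the boundary data $\varphi\equiv 0$ and to the compact support of $f$ — the latter being precisely what makes the equation solvable inside $\mathcal E_0(V)$. The simplification to zero boundary data also lets us dispense with the continuous barriers $\psi_1,\psi_2$ of Theorem \ref{BT}, which are unavailable here since $V$ is only assumed hyperconvex. First note that $V$ is mildly singular (indeed very mildly singular): local irreducibility means $V_{\red}=\emptyset$, so all results of Sections 2--3 apply with $V'=V$. Set $C:=\sup_V f<\infty$ (finite since $f\ge 0$ is upper semicontinuous with compact support $K$). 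Applying Proposition \ref{e0} with $E=\emptyset$ (legitimate because $\ov{V_{\red}}\cap V=\emptyset$), choose a relatively compact open $U\subset\subset V$ with $K\subset U$ and $\rho'\in\mathcal E_0(V)$ with $dd^c\rho'=\om$ on $U$, and put $\psi:=C^{1/k}\rho'\in\mathcal E_0(V)$. Then $(dd^c\psi)^k=C\om^k\ge f\om^k$ on $U$ while $(dd^c\psi)^k\ge 0=f\om^k$ on $V\setminus K$, so the Perron family
$$\mathcal B:=\{v\in PSH^{-}(V)\cap L^\infty(V):\ \lim_{z\to\partial V}v(z)=0,\ (dd^c v)^k\ge f\om^k\}$$
contains $\psi$, hence is non-empty. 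Let $u:=\sup\{v:v\in\mathcal B\}$.

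Next I would establish the basic properties of the envelope. The class $\mathcal B$ is stable under finite maxima, by the standard lower bound $(dd^c\max\{v_1,v_2\})^k\ge\mathbf 1_{\{v_1\ge v_2\}}(dd^c v_1)^k+\mathbf 1_{\{v_1<v_2\}}(dd^c v_2)^k$ on $V_{\reg}$ (neither side charges $V_{\sig}$). By Choquet's lemma there is an increasing sequence $\{v_j\}\subset\mathcal B$ with $\big(\sup_j v_j\big)^*=u^*$; since $V$ is locally irreducible, Proposition \ref{regular} gives $u^*\in PSH(V)$, and $\psi\le u\le 0$ then forces $u^*\in PSH^{-}(V)\cap L^\infty(V)$ with $\lim_{z\to\partial V}u^*(z)=0$. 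Passing to the limit in $\{v_j\}$ on the manifold $V_{\reg}$ via the Bedford--Taylor increasing-limit theorem gives $(dd^c v_j)^k\to(dd^c u^*)^k$ weakly there, hence $(dd^c u^*)^k\ge f\om^k$ on $V_{\reg}$, and then on all of $V$ since $f\om^k$ carries no mass on $V_{\sig}$.

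The core of the argument is the reverse inequality $(dd^c u^*)^k\le f\om^k$, carried out by a local balayage as in Theorem \ref{BT}. Fix $z_0\in V_{\reg}$ and a neighbourhood $W\subset\subset V_{\reg}$ biholomorphic to a ball in $\C^k$. Since $(dd^c u^*)^k\ge f\om^k$ on $W$, Lemma \ref{local} supplies $\tilde u\in PSH(W)\cap L^\infty(W)$ with $(dd^c\tilde u)^k=f\om^k$ on $W$, $\tilde u\ge u^*$, and $\lim_{z\to\xi}\tilde u(z)=u^*(\xi)$ for $\xi\in\partial W$ (so $\tilde u\le 0$ by the maximum principle). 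By the gluing lemma \ref{gluing}, the function equal to $\tilde u$ on $W$ and to $u^*$ on $V\setminus W$ lies in $PSH^{-}(V)\cap L^\infty(V)$, has zero boundary values, and its Monge--Amp\`ere mass dominates $f\om^k$ (the latter puts no mass on the ``sphere'' $\partial W$). Hence this function belongs to $\mathcal B$, so it is $\le u\le u^*$; combined with $\tilde u\ge u^*$ this yields $\tilde u=u^*$ on $W$, whence $(dd^c u^*)^k=f\om^k$ on $W$. As $z_0$ ranges over $V_{\reg}$ and neither measure charges $V_{\sig}$, we conclude $(dd^c u^*)^k=f\om^k$ on $V$; in particular $u^*\in\mathcal B$, so $u=u^*$, and $\int_V(dd^c u)^k=\int_V f\om^k<\infty$, giving $u\in\mathcal E_0(V)$.

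For uniqueness, any $w\in\mathcal E_0(V)$ is bounded with finite total Monge--Amp\`ere mass, so the constant sequence $w_j\equiv w$ exhibits $w\in\mathcal F_1(V)\subset\mathcal E_1(V)$; two solutions $u,w$ then satisfy both $(dd^c u)^k\le(dd^c w)^k$ and $(dd^c w)^k\le(dd^c u)^k$, and the domination principle (Theorem \ref{E1domination}) gives $u\ge w$ and $w\ge u$, so $u=w$. I expect the principal obstacle to be the balayage step: one must check that the glued function is genuinely plurisubharmonic on all of $V$ — this is exactly where local irreducibility (through the construction behind Proposition \ref{regular}) and Lemma \ref{gluing} are indispensable — and that no spurious Monge--Amp\`ere mass is created on $\partial W$, just as in Theorem \ref{BT}. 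A second, smaller point of care is the non-emptiness of $\mathcal B$: since a continuous negative plurisubharmonic exhaustion of $V$ is not available, Theorem \ref{BT} cannot be invoked verbatim, and the subsolution must instead be extracted from Proposition \ref{e0}.
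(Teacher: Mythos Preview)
Your argument is essentially correct and runs the Perron/balayage machinery of Theorem~\ref{BT} directly rather than quoting it. There is one small slip: invoking Proposition~\ref{e0} with $E=\emptyset$ does not work, because then the defining family in (\ref{extremal}) reduces to $\{v\in\PSH(V'):\sup v\le\sup\rho,\ v|_V<0\}$, whose supremum is identically~$0$ (the constants $-\ve$ belong to it for every $\ve>0$), and the construction collapses. You need $E$ nonempty so that the extremal function is genuinely negative on~$V$; since $V_{\red}=\emptyset$, any small relatively compact open set $E\subset V$ disjoint from a neighbourhood $U\supset K$ meets the hypotheses, and the final clause of Proposition~\ref{e0} then yields $\rho'\in\E_0(V)$ with $dd^c\rho'=\omega$ on~$U$ as you want.

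The paper's proof is shorter and proceeds differently: it first observes (Remarks~\ref{exhaustion1} and~\ref{negative}) that a locally irreducible hyperconvex $V$ \emph{does} admit a $\CC^\infty$-smooth negative strictly \psh\ exhaustion~$\rho$, scales it so that $(dd^c(\lambda\rho))^k\ge f\omega^k$ on $\su(f)$, and then applies Theorem~\ref{BT} verbatim with $\psi_1=\lambda\rho$, $\psi_2=0$, $\varphi=0$. So your premise that continuous barriers are unavailable is mistaken, and this shortcut bypasses the Choquet/max-stability/increasing-limit verifications you carry out. (Similarly for uniqueness: the bounded comparison principle of Theorem~\ref{thmBe} already suffices; Theorem~\ref{E1domination} is correct but heavier than needed.) Your self-contained route is still valuable in that it shows exactly which pieces of the Perron argument survive without continuity of the subsolution.
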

\begin{proof}
By Remarks \ref{exhaustion1} and \ref{negative}, $V$ admits a $\mathcal C^\infty-$smooth negative strictly \psh\ exhaustion function $\rho$. Then for $\la>0$ large enough we have $(dd^c (\la\rho))^k \ge f\om^k$ on support of $f.$
Thus we may apply Theorem \ref{BT} with $\psi_1=\la \rho, \psi_2=0$ and $\va=0$ to reach the desired conclusion.
\end{proof}
The following result might be considered as an analogue of the classical Kolodziej's subsolution theorem for functions in $\E_1 (V).$ Our approach, however, is strongly inspired by Cegrell in [Ce3].
\begin{theorem} \label{Cegrelltheorem}
Let $V$ be a locally irreducible hyperconvex variety and $\mu$ be non-negative Radon measure on $V$
such that $\mu(V)>0.$ Suppose that there exists $v \in \E_1 (V)$ satisfying $(dd^c v)^k \ge \mu.$
Then there exists a unique $u \in \E_1 (V)$ such that $(dd^c u)^k=\mu.$
\end{theorem}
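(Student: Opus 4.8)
The plan is to follow Cegrell's scheme for the $\E_1$ Dirichlet problem, using the given subsolution $v$ to supply all a priori bounds. Uniqueness will be immediate: if $u_1,u_2\in\E_1(V)$ both satisfy $(dd^c\cdot)^k=\mu$, then $(dd^cu_1)^k\le(dd^cu_2)^k$ and conversely, so the domination principle (Theorem \ref{E1domination}) forces $u_1\ge u_2\ge u_1$. For existence, the first observation is that local irreducibility means $V_{\red}=\emptyset$, so $V$ is very mildly singular and every result of Sections 2 and 3 applies; moreover, since $\mu\le(dd^cv)^k$ with $v\in\E_1(V)$, Proposition \ref{classE1}(e) shows that $\mu$ puts no mass on pluripolar sets, in particular none on $V_{\sig}$ (Lemma \ref{outer}).

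The next step is Cegrell's decomposition: because $\mu$ is carried by no pluripolar set, I would write $\mu=f(dd^c\va)^k$ with $\va\in\E_0(V)$ and $0\le f\in L^1((dd^c\va)^k)$; on $V$ this is obtained by adapting Cegrell's argument, the requisite test functions being produced by Proposition \ref{e0} and Corollary \ref{weakversion} together with a Radon-Nikodym argument (alternatively, since here $\mu\le(dd^cv)^k$ outright, one may start directly from a Radon-Nikodym density of $\mu$ against $(dd^cv)^k$). Then set $\mu_j:=\min(f,j)(dd^c\va)^k$, so that $\mu_j\uparrow\mu$, each $\mu_j$ charges no pluripolar set, and $\mu_j\le(dd^c(j^{1/k}\va))^k$ with $j^{1/k}\va\in\E_0(V)$.

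For each fixed $j$ I would solve $(dd^cu_j)^k=\mu_j$ by a balayage construction: let $u_j$ be the upper regularization of the supremum of all $w\in\PSH^-(V)\cap L^\infty(V)$ with $\lim_{z\to\partial V}w(z)=0$ and $(dd^cw)^k\ge\mu_j$. This family is nonempty (it contains $j^{1/k}\va$), stable under $\max$, and its members lie between $j^{1/k}\va$ and $0$; by Proposition \ref{regular} the regularization is plurisubharmonic, and by Corollary \ref{corBe} its total Monge-Amp\`ere mass is finite, so $u_j\in\E_0(V)$ and, by the standard envelope argument with Bedford-Taylor monotone convergence, $(dd^cu_j)^k\ge\mu_j$. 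The reverse inequality is local and mimics the proof of Theorem \ref{BT}: at a point $z_0\in V_{\reg}$ take a ball-neighborhood $W$ (with $\mu_j(\partial W)=0$), solve on $W$ the bounded Dirichlet problem with right-hand side $\mu_j|_W\le j(dd^c(\va|_W))^k$ and boundary data $u_j|_{\partial W}$ to get $g$ with $g\ge u_j$ on $W$, then glue by Lemma \ref{gluing} to a competitor $\hat u\in\E_0(V)$; this forces $\hat u\le u_j$, hence $u_j=g$ on $W$ and $(dd^cu_j)^k=\mu_j$ there. Since $z_0$ is arbitrary and neither measure charges $V_{\sig}$, we conclude $(dd^cu_j)^k=\mu_j$ on $V$, with $u_j\in\E_0(V)\subset\E_1(V)$ and $\int_V(dd^cu_j)^k\le j\int_V(dd^c\va)^k<\infty$.

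Finally I would let $j\to\infty$. From $\mu_j\le\mu_{j+1}\le\mu\le(dd^cv)^k$ and $u_j,u_{j+1},v\in\E_1(V)$, the domination principle gives $u_j\ge u_{j+1}\ge v$, so $u_j\downarrow u$ with $v\le u\le 0$, whence $u\in\E_1(V)$ by Proposition \ref{classE1}(b). By the monotone convergence theorem for $\E(V)$ (Theorem \ref{maintheorem2}), $(dd^cu_j)^k\to(dd^cu)^k$ weakly, while $(dd^cu_j)^k=\mu_j\to\mu$ weakly; hence $(dd^cu)^k=\mu$, as required. The hard part will be the combination in the two middle paragraphs: Theorem \ref{BT} and Corollary \ref{weakversion} only produce solutions for measures of the form $f\om^k$, whereas after decomposition we must handle measures merely dominated by $(dd^c\va)^k$ with $\va\in\E_0(V)$, which are typically singular with respect to Lebesgue measure; closing this gap needs Cegrell's decomposition together with a local bounded subsolution theorem on balls of $V_{\reg}$ (Ko{\l}odziej's theorem, available for domains in $\C^k$). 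Once that is in hand, the energy bookkeeping that keeps all approximants inside $\E_1(V)$ (via Lemma \ref{energy}, Lemma \ref{ef1} and Proposition \ref{classE1}) is routine.
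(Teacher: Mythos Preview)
Your uniqueness argument and your final limiting step (pass from $u_j$ to $u$ via the domination principle and Theorem \ref{maintheorem2}) match the paper's. The core difference is in how you produce the approximate solutions $u_j$, and here your plan has a genuine gap that you yourself flag but do not close.

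You propose to invoke Cegrell's decomposition $\mu=f(dd^c\va)^k$ with $\va\in\E_0(V)$, then set $\mu_j=\min(f,j)(dd^c\va)^k$ and solve $(dd^cu_j)^k=\mu_j$ by an envelope/balayage argument, appealing locally to Ko{\l}odziej's bounded subsolution theorem on balls in $V_{\reg}$. Neither ingredient is available in this paper: Cegrell's decomposition is not established on varieties here, and the only local Dirichlet result proved is Theorem \ref{BT}/Lemma \ref{local}, which handles data of the form $f\om^k$ with $f$ upper semicontinuous, not arbitrary measures dominated by $(dd^c\va)^k$. So as written your middle two paragraphs rest on results outside the paper's toolkit.

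The paper avoids both obstacles. For $\mu$ compactly supported in $V_{\reg}$ it \emph{smooths $\mu$ itself} by convolution in local charts, obtaining measures $\mu^{\ve_j}$ that are genuinely of the form (smooth density)$\cdot\om^k$ with compact support; these are solved directly by Corollary \ref{weakversion}, producing $u_j\in\E_0(V)$ with $(dd^cu_j)^k=\mu^{\ve_j}$. The price is that $\mu^{\ve_j}$ need not increase and need not be dominated by $(dd^cv)^k$, so one cannot simply invoke the domination principle to get monotonicity; instead the paper runs Cegrell's variational argument from [Ce3]: one shows uniform energy bounds via (\ref{assumption}), passes to $u:=(\varlimsup u_j)^*$, and proves $(dd^cu)^k=\mu$ by comparing the polynomials $P(t)=(\int(-u-t\theta)d\mu)^{k+1}$ and $Q(t)=(\int(-u)d\mu)^k\int(-u-t\theta)(dd^c(u+t\theta))^k$ at $t=0$ and their right derivatives, for arbitrary $\theta\in\E_0(V)$. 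The general $\mu$ is then reduced to this case by cutoffs $\chi_p\mu$ and the monotone passage you describe. In short: your scheme trades the paper's variational step for heavy external input (Cegrell decomposition + Ko{\l}odziej), which keeps the endgame cleaner but is not self-contained within the paper; the paper's route stays inside its own framework at the cost of the more delicate energy/variational identification of the limit.
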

\begin{remark}
Let $E$ be a pluripolar subset of $V$ then $\mu(E)=0.$ Indeed, according to Proposition \ref{pluripolar}, there exists
$\va \in \F_1 (V)$ such that $\va|_E=-\infty.$
By the energy estimate given in Proposition \ref{classE1} (d) we obtain
$$\int\limits_V (-\va)d\mu \le \int\limits_V (-\va) (dd^c v)^k<\infty.$$
Thus $\mu(E)=0$ as desired.
\end{remark}
\begin{proof} The uniqueness of $u$ follows immediately from Corollary \ref{E1domination}. For the existence,
we first note that if $h \in \E_0 (V)$, then using again the energy estimate and the assumption we obtain
\begin{equation} \label{assumption}
\int\limits_V (-h)d\mu \le \int\limits_V (-h) (dd^c v)^k \le A\Big (\int\limits_V (-h)(dd^c h)^k\Big)^{\fr1{k+1}},
\end{equation}
where $A:= \Big (\int\limits_V (-v)(dd^c v)^k\Big)^{\frac k{k+1}}<\infty.$

Now we basically follow the scheme outlined in the proof of Theorem 3.2 in [Ce3].
There are two cases to be considered:

\noindent
{\it Case 1.} $\mu$ is compactly supported on $V_{\reg}$. We will show that there exists $u \in \F_1(V)$ such that
$(dd^c u)^k=\mu.$ Towards this end, we first
cover $K:=\su(\mu)$ by a finite number of open subsets $U_1, \cdots, U_s$ of $V_\reg$ such that for each $m=1, \cdots,s$, there exists a biholomorphic map $f_m: \mathbb B \to U_m$, where $\mathbb B$ is the unit ball in $\mathbb C^k.$
Let $\{\chi_1, \cdots, \chi_s\}$ be a partition of unity that subordinates to the covering $U_1, \cdots, U_s$
such that $\chi_m$ is non-vanishing on $K$ for every $m.$
For each $\ve>0,$ we also let $\rho_\ve$ be standard smoothing kernels with compact support in
$\mathbb B(0, \ve) \subset \mathbb C^k.$ Consider the following approximants to $\mu$
$$\mu^\ve:= \sum_m \chi_m \Big ((\nu_m*\rho_\ve)_*f_m\Big ),$$
where $\nu_m: =\mu^* f_m$ is the pull-back of $\mu$ under $f_m$.
It is then clear that $\mu^\ve$ are smooth functions with compact support in $V$. Moreover, for each {\it positive}
measurable function $\va$ defined on $V$ we have
\begin{equation} \label{convol}
\int_V \va d\mu =\sum_m \int_{\mathbb B} \va_m d\nu_m,
\int_V \va d\mu^\ve =\sum_m \int_{\mathbb B} (\va_m*\rho_\ve) d\nu_m,
\end{equation}
where $\va_m:= (\va\chi_m)\circ f_m.$ It follows that
$\mu^{\ve}$ converges weak$^*$ to $\mu$ as $\ve \downarrow 0.$
Now we let $\{\ve_j\}$ be an arbitrary sequence that decreases to $0$.
According to Corollary \ref{weakversion}, we can find $u_j \in \E_0 (V)$ such that
$(dd^c u_j)^k=\mu^{\ve_j}$ for every $j.$

\noindent
We now prove the following statements:

\noindent
(i) $\int\limits_V (-\va) d\mu=\lim\limits_{j \to \infty} \int\limits_V (-\va)d\mu^{\ve_j},
\forall \va \in \PSH^{-}(V)$ such that $\int\limits_V (-\va)d\mu<\infty.$

\noindent
(ii) There exists $j_0 \ge 1$ such that $2\int\limits_V (-u_j)d\mu \ge \int\limits_V (-u_j)d\mu_j$ for $j \ge j_0.$

\noindent
(iii) $u:=(\varlimsup\limits_{j \to \infty} u_j)^* \in PSH^{-} (V), \int\limits_V (-u)d\mu <\infty.$

\noindent
(iv) $\int\limits_V (-\va) d\mu \ge \int\limits_V (-\va)(dd^c u)^k, \forall \va \in \PSH^{-}(V).$

\noindent
(v) $\varlimsup\limits_{j \to  \infty} \int\limits_V (-v_j)(dd^c v_j)^k \le \varlimsup\limits_{j \to  \infty} \int\limits_V (-u_j)(dd^c u_j)^k \le \int\limits_V (-u)d\mu.$

\noindent
For (i), it suffices to show for each $m$ we have
$$\int\limits_{\mathbb B} (-\va_m)d\nu_m= \lim\limits_{j \to \infty} \int\limits_{\mathbb B} (-\va_m*\rho_{\ve_j})d\nu_m.$$
This follows from the following two facts which can be checked easily:

\noindent
(i') $(\va \circ f_m) *\rho_{\ve_j} \downarrow \va \circ f_m$ on $\mathbb B$ as $j \to \infty;$

\noindent
(ii') $\int\limits_{\mathbb B} (-\va_m)d\nu_m \le \int\limits_{V} (-\va)d\mu<\infty.$

\noindent
For (ii), it is enough to show that for every $1 \le m \le s$ there exist $j_0 \ge 1$ such that
$$2\int\limits_{\mathbb B} \eta_{j,m}*\rho_{\ve_j} d\nu_m \ge \int\limits_{\mathbb B} \eta_{j,m} d\nu_m \ \forall j \ge j_0,$$
where $\eta_{j,m}:=(u_j\chi_m)\circ f_m.$
This follows from (\ref{convol}) and the two facts that
$$u_j \circ f_m \le (u_j \circ f_m)*\rho_{\ve_j} \ \text{and}\ \chi_m \ne 0 \ \text{on}\ K.$$
\noindent
Next, we use (ii) and (\ref{assumption}) to get
$$\int\limits_V (-u_j)d\mu_j \le 2 \int\limits_V (-u_j)d\mu \le 2A\Big (\int\limits_V (-u_j)d\mu_j\Big )^{\fr1{k+1}}, \ \forall j \ge j_0.$$
This implies, for $j \ge j_0$ that
$$\int\limits_V (-u_j)d\mu_j \le 2 \sup\limits_j \int\limits_V (-u_j)d\mu \le A',$$
where $A'>0$ is a constant depends only on $A$ and $k.$
Hence $u:=(\varlimsup\limits_{j \to \infty} u_j)^* \in \PSH^{-} (V).$
Moreover, $v_j \downarrow u$ on $V$ where
$v_j:= (\sup\limits_{p \ge j} u_p)^* \in \E_0 (V),$ since $V$ is locally irreducible.
So by Lebesgue's monotone convergence theorem we obtain
$$\int\limits_V (-u)d\mu=\sup\limits_j\int\limits_V(-v_j)d\mu \le \sup\limits_j\int\limits_V (-u_j)d\mu<A'.$$
This is the assertion (iii).

\noindent
For (iv), we first use Proposition \ref{pro1} and Remark \ref{negative} to get a sequence
$\{\va_p\} \subset \E_0 (V)\cap \CC(\ov V)$ such that $\va_p \downarrow \va$ on $V.$
For each $p,$ by the monotonicity lemma (cf. Lemma \ref{monotonicity}) we get
$$\int_V (-\va_p) (dd^c u_j)^k \ge \int_V (-\va_p)(dd^c v_j)^k.$$
Now Theorem \ref{maintheorem1} implies that $(dd^c v_j)^k$ is weak$^*-$convergent to $(dd^c u)^k.$
So by letting $j \to \infty$ in the last inequality and taking into account the fact that $(-\va_p)(dd^c u_j)^k$ is
weak$^*-$convergent to $(-\va_p)\mu$ and all these measures vanish off a {\it fixed} compact set of $V$ we obtain
$$\int\limits_V (-\va_p) d\mu \ge \int\limits_V (-\va_p)(dd^c u)^k$$
Then it suffices to let $p \to \infty$ and use Lebesgue monotone's convergence theorem to get (iv).

\noindent
It remains to check (v). For this, we note that the inequality on the left hand side is a direct consequence of the monotonicity lemma (cf. Lemma \ref{monotonicity}). The proof of the second assertion is more intricate.
First we note that, by passing to a subsequence, we may achieve that $u_j$ converges to $u$ in $L^1_{\loc} (V).$
Now we argue by contradiction. Suppose that
$$\infty>a:=\varlimsup\limits_{j \to  \infty} \int\limits_V (-u_j)(dd^c u_j)^k>b:=\int\limits_V (-u)d\mu>0.$$
We choose $\de \in (0, a-b)$ such that
$a^{\fr{k-1}k} b<(a-\de)^k.$
Then by the same reasoning as in (ii) we can show, using (\ref{convol}) that there exists $j_0$ large enough such that for $j_0 \le j \le l$ we have
$$\int_V (-u_j)d\mu^{\ve_j} \le \int_V (-u_j)d\mu^{\ve_l}+\de$$
Then by using the integration by part formula (cf. Corollary \ref{coropartformular1}) and Lemma \ref{energy},
for $j_0 \le j \le l$ we have
$$\begin{aligned}
\int_V (-u_j)(dd^c u_j)^k &\le \int_V (-u_j)(dd^c u_l)^k +\de\\
&=\int_V (-u_l)dd^c u_j \wedge (dd^c u_l)^{k-1}+\de\\
& \le \Big (\int_V (-u_l)(dd^c u_j)^k\Big)^{\fr1{k}}\Big (\int_V (-u_l)(dd^c u_l)^k\Big)^{\fr{k-1}{k}}+\de.
\end{aligned}$$
Now we note that
$$\lim\limits_{l \to \infty} \int\limits_V (-u_l)(dd^c u_j)^k=
\lim\limits_{l \to \infty} \int\limits_V (-u_l)d\mu^{\ve_j}=\int\limits_V (-u)d\mu^{\ve_j}.$$
Here the last equality follows from (\ref{convol}) and the fact that
$\eta_{l, m}*\rho_{\ve_j}$ converges {\it uniformly} to $\eta_m*\rho_{\ve_j}$ as $l \to \infty$ on $\su{\nu_m}$
where
$\eta_{l,m}:=(u_l\chi_m)\circ f_m, \eta_m:= (u\chi_m)\circ f_m.$
This uniform convergent is a straightforward consequence of the fact that $u_l\chi_m$ converges to $u\chi_m$ in $L^1-$norm on the support of $\chi_m.$
Putting all these together we obtain
$$\int\limits_V (-u_j)(dd^c u_j)^k \le a^{\fr{k-1}k}\Big (\int\limits_V (-u)d\mu^{\ve_j} \Big)^{\fr1k}+\de.$$
By letting $j \to \infty$ and using (ii) we reach
$$a=\varlimsup\limits_{j \to \infty} \int\limits_V (-u_j)(dd^c u_j)^k \le a^{\fr{k-1}k}b^{\fr1k}+\de.$$
After rearranging we arrive at a contradiction to the choice of $\de.$ Thus we have proved (iv).
The rest of our proof goes as follows.
Let $\theta \in \E_0 (V)$ be given. For $t \ge 0$, from (ii) and (iv) we obtain
$$\begin{aligned}
\int\limits_V -(u+t\theta)d\mu&=\lim_{j \to \infty}\int\limits_V -(u+t\theta)(dd^c u_j)^k\\
&\le \Big (\int\limits_V -(u+t\theta)(dd^c (u+t\theta))^k\Big )^{\fr1{k+1}}
\Big (\varlimsup\limits_{j \to \infty} \int\limits_V (-u_j)(dd^c u_j)^k\Big)^{\fr{k}{k+1}}\\
&\le \Big (\int\limits_V (-u)d\mu\Big)^{\fr{k}{k+1}}
\Big (\int\limits_V -(u+t\theta)(dd^c u+tdd^c \theta)^k\Big )^{\fr1{k+1}}.
\end{aligned}$$
It follows that $P(t) \le Q(t)$ for $t \ge 0,$ where
$$P(t):= \Big (t \int\limits_V (-\theta)d\mu+\int\limits_V (-u)d\mu\Big )^{k+1},
Q(t):= \Big (\int\limits_V (-u)d\mu\Big)^k\Big (\int\limits_V -(u+t\theta)(dd^c u+tdd^c \theta)^k\Big ).$$
Put $t=0$ we have
$$\int\limits_V (-u)d\mu=P(0)\le Q(0)=\int\limits_V (-u)(dd^c u)^k.$$
Combining this with (i)  we obtain
$$\int\limits_V (-u)d\mu=\int\limits_V (-u)(dd^c u)^k.$$
This means that $P(0)=Q(0).$ Hence $P'(0^{+}) \le Q' (0^{+}).$ By some easy computations and using Corollary \ref{coropartformular1} we have
$$\begin{aligned}
P'(0^{+})&=(k+1) \Big (\int\limits_V (-u)d\mu \Big )^k \int\limits_V (-\theta)d\mu,\\
Q'(0^{+})&= \Big (\int\limits_V (-u)d\mu\Big)^k \Big (\int_V (-\theta) (dd^c u)^k+k\int_V (-u)(dd^c u)^{k-1} \wedge dd^c \theta\Big)\\
& \le (k+1) \Big (\int\limits_V (-u)d\mu\Big)^k\int\limits_V (-\theta)(dd^c u)^k.
\end{aligned}$$
It follows, using $\int\limits_V (-u)d\mu>0$, that
$$\int\limits_V (-\theta)d\mu \le \int\limits_V (-\theta)(dd^c u)^k.$$
Taking (i) into account again we actually have
$$\int\limits_V (-\theta)d\mu=\int\limits_V (-\theta)(dd^c u)^k.$$
Since this identity holds true for every $\theta \in \E_0 (V),$ by Lemma \ref{E0} we have $(dd^c u)^k=\mu$.
Observe also that, since $\sup\limits_j\int\limits_V (dd^c v_j)^k \le \sup\limits_j\int\limits_V (dd^c u_j)^k <\infty$
and since $v_j \downarrow u,$ using (v) and Theorem \ref{maintheorem1} we conclude that $u \in \F_1 (V)$.
Recall that $\int\limits_V (-u)d\mu<A'$ where $A>0$ depends only on $k, A.$
The proof is complete in this special case.

\noindent
{\it Case 2.} $\mu$ is a general non-negative Radon measure.
Let $0 \le \chi_p \uparrow 1$ be a sequence of smooth function with compact support in $V_{\reg}.$
By the remark that follows Theorem \ref{Cegrelltheorem}, $\mu$ does not charge $V_{\sig}.$
Therefore $\mu_p:=\chi_p \mu$ converges weak$^*-$ to $\mu.$
By Case 1, we can find $u_p \in \F_1 (V)$ such that
$(dd^c u_p)^k=\mu_p.$
Moreover, by the domination principle (cf. Theorem \ref{E1domination}),
for each $p$ we have $u_p \ge u_{p+1} \ge v.$
Therefore $u_p \downarrow u \in \E_1 (V)$ on $V,$ by Proposition \ref{classE1}.
Finally, using Theorem \ref{maintheorem2} we have $(dd^c u)^k=\mu.$
The proof is thereby completed.\end{proof}
\vskip1cm
\centerline{\bf References}

\noindent
[Be] E. Bedford, {\it The operator $(dd^c)^n$ on complex spaces}, S\'eminaire Lelong-Skoda, Springer Lecture Notes
{\bf 919} (1981), 294-323.

\noindent
[BL] T. Bloom and N. Levenberg, {\it Distribution of nodes on algebraic curves in $\mathbb C^N$,} Ann. Inst. Fourier (Grenoble) {\bf 53} (2003), 1365-1385.

\noindent
[B\l] Z. B\l ocki, {\it The complex Monge-Amp\`ere operators in hyperconvex domains,} Annali della Scuola Normale Superiore di Pisa {\bf 23} (1996), 721-747.

\noindent
[BT1] E. Bedford and A. Taylor, {\it The Dirichlet problem for the complex Monge-Amp\`ere equation},
Invent. Math. {\bf 37} (1976), 1-44.

\noindent
[BT2] E. Bedford and A. Taylor, {\it A new capacity for \psh\ functions,} Acta. Math. {\bf 149} (1982), 1-40.

\noindent
[BT3] E. Bedford and A. Taylor, {\it Fine topology, Shilov boundary, and $(dd^c)^n,$} Journal of Functional Analysis
{\bf 72} (1987), 225-251.

\noindent
[Ce1] U. Cegrell, {\it Pluricomplex energy}, Acta Math. {\bf 180} (1998), 187-217.

\noindent
[Ce2] U. Cegrell, {\it The general definition of the complex Monge-Amp\`ere operator}, Ann.
Inst. Fourier (Grenoble) {\bf 54} (2004), 159-179.

\noindent
[Ce3] U. Cegrell, {\it Measures of finite pluricomplex energy}, http://front.math.ucdavis.edu/1107.1899.

\noindent
[Ce4] U. Cegrell, {\it Convergence in capacity}, Canad. Math. Bull., {\bf 55} (2012), 242-248.

\noindent
[Ch] E. Chirka, {\it Complex Analytic Sets}, Kluwer, Dordecht, 1989.

\noindent
[CP] U. Cegrell and L. Persson, {\it An energy estimate for the complex Monge-Amp\`ere operator}, Ann. Polon. Math.
{\bf 67} (1997), 95-102.

\noindent
[De] J.-P. Demailly, {\it Mesures de Monge-Amp\`ere et caract\'erisation g\'eom\'etrique des vari\'et\'es
alg\'ebriques affines}, M\'em. Soc. Math. France (N.S.) {\bf 19} (1985) 124 pp.

\noindent
[DLS] N. Q. Dieu and T.V. Long, {\it Approximation of plurisubharmonic functions on complex varieties},
https://arxiv.org/abs/1611.03577.

\noindent
[DS] N.Q. Dieu and O. Sanphet, {\it A comparison principle for bounded \psh\ functions on complex varieties of
$\mathbb C^n$}, https://arxiv.org/abs/1702.06732.

\noindent
[FN] J. E. Fornaess and R. Narasimhan, {\it The Levi problem on complex spaces with singularities},
Math. Ann., {\bf 248} (1980) 47-72.

\noindent
[Pe] L. Persson, {\it A Dirichlet principle for the complex Monge-Amp\`ere operator}, Ark. Mat., {\bf 37} (1999),
345-356.

\noindent
[Siu] Y. T. Siu, {\it Every Stein subvariety admits a Stein neighborhood,} Invent. Math., {\bf 38} (1977), 89-100.

\noindent
[Wik1] F. Wikstr\"om, {\it Continuity of the relative extremal function on analytic varieties in $\mathbb C^n,$}
Ann. Polon. Math. {\bf 86} (2005), 219-225.

\noindent
[Wik2] F. Wikstr\"om, {\it The Dirichlet problem for maximal \psh\ functions on analytic varieties}, Int. Journal of Math. {\bf 20} (2009), 521-528.

\noindent
[Ze] A. Zeriahi, {\it Fonction de Green pluricomplexe \`a p\^ole à l'infini sur un espace de Stein parabolique et applications,}  Math. Scand. {\bf 69} (1991), no. 1, 89-126.
\end{document}